\newcommand{\iref}[1]{\textit{(\ref{#1})}}
\newtheorem{theorem}{Theorem}
\newtheorem{lemma}[theorem]{Lemma}
\newtheorem{proposition}[theorem]{Proposition}
\newtheorem{corollary}[theorem]{Corollary}
\newtheorem{definition}[theorem]{Definition}
\theoremstyle{definition}
\newtheorem{remark}[theorem]{Remark}
\newtheorem*{remark*}{Remark}
\titleformat{\section}[hang]{\bfseries}{\thesection.}{.5em}{\centering}{}
\renewcommand{\paragraph}[1]{\medskip\noindent\textbf{#1}.}
\newcommand{\ommit}[1]{
}
\newcommand{\id}{\mathrm{id}}
\newcommand{\R}{\mathbb{R}}
\newcommand{\Z}{\mathbb{Z}}
\newcommand{\RP}{\mathbb{R}\textup{P}}
\newcommand{\coordsurf}[1]{\mathscr{B}^{#1}}
\newcommand{\cell}[1]{\mathscr{C}^{#1}}
\newcommand{\arcs}{\mathcal{A}^3}
\newcommand{\pr}{\mathcal{F}_{o,\pi}}
\newcommand{\linespace}{\mathcal{L}}
\newcommand{\vspan}[1]{\operatorname{span}(#1)}
\newcommand{\codim}{\operatorname{codim}}
\newcommand{\crr}{\operatorname{cr}}
\renewcommand{\phi}{\varphi}
\renewcommand{\epsilon}{\varepsilon}
\newcommand{\longto}{\longrightarrow}
\newcommand{\wtilde}{\widetilde}
\renewcommand{\implies}{\Longrightarrow}
\title{Supercyclidic nets}
\author[A.~I.~Bobenko, E.~Huhnen-Venedey, and T.~R\"orig]{Alexander I. Bobenko \and Emanuel Huhnen-Venedey \and Thilo R\"orig}
\thanks{This research was supported by the DFG Collaborative Research Center TRR
109 “Discretization in Geometry and Dynamics” --
\texttt{http://www.discretization.de}.}
\address{Alexander I. Bobenko, Emanuel Huhnen-Venedey, Thilo
R\"orig\vspace{-.5em}}
\address{Institute of Mathematics, Secr. MA 8-4, TU Berlin, 10623
Berlin, Germany\vspace{-.5em}}
\address{Email: {\normalfont \{bobenko,huhnen,roerig\}@math.tu-berlin.de}}
\keywords{
	Discrete differential geometry,
	projective geometry,
	discrete integrability,
	discrete conjugate nets (Q-nets),
	fundamental line systems,
	supercyclides,
	surface transformations,
	architectural geometry}
\subjclass[2010]{51A05, 53A20, 37K25, 65D17}
\begin{document}

\begin{abstract}
	Supercyclides are surfaces with a characteristic conjugate parametrization
	consisting of two families of conics. Patches of supercyclides can be adapted
	to a Q-net (a discrete quadrilateral net with planar faces) such that
	neighboring surface patches share tangent planes along common boundary curves.
	We call the resulting patchworks ``supercyclidic nets'' and show that every
	Q-net in $\RP^3$ can be extended to a supercyclidic net. The construction is
	governed by a multidimensionally consistent 3D system. One essential aspect of
	the theory is the extension of a given Q-net in $\RP^N$ to a system of
	circumscribed discrete torsal line systems. We present a description of the
	latter in terms of projective reflections that generalizes the systems of
	orthogonal reflections which govern the extension of circular nets to cyclidic
	nets by means of Dupin cyclide patches.
\end{abstract}

\maketitle

\section{Introduction}

Discrete differential geometry aims at the development of discrete equivalents
of notions and methods of classical differential geometry.  One prominent
example is the discretization of parametrized surfaces and the related theory,
where it is natural to discretize parametrized surfaces by quadrilateral nets
(also called quadrilateral meshes). In contrast to other discretizations of
surfaces as, e.g., triangulated surfaces, a quadrilateral mesh reflects the
combinatorial structure of parameter lines.  While unspecified quadrilateral
nets discretize arbitrary parametrizations, the discretization of distinguished
types of parametrizations yields quadrilateral nets with special properties. 

The present work is on the piecewise smooth discretization of classical
conjugate nets by \emph{supercyclidic nets}. They arise as an extension of the
well established, integrable discretization of conjugate nets by quadrilateral
nets with planar faces, the latter often called \emph{Q-nets} in discrete
differential geometry.  Two-dimensional Q-nets as discrete versions of conjugate
surface parametrizations were proposed by Sauer in the 1930s
\cite{Sauer:1937:ProjLinienGeometrie}. Multidimensional Q-nets are a subject of
modern research \cite{DoliwaSantini:1997:QnetsAreIntegrable,BobenkoSuris:2008:DDGBook}.
The surface patches that we use for the extension are pieces of
\emph{supercyclides}, a class of surfaces in projective 3-space that we discuss
in detail in Section~\ref{sec:supercyclides}. Supercyclides possess conjugate
parametrizations with all parameter lines being conics and such that there
exists a quadratic tangency cone along each such conic.  As a consequence,
isoparametrically bounded surface patches coming from those characteristic
parametrizations (referred to as \emph{SC-patches}) always have coplanar
vertices, which makes them suitable for the extension of Q-nets to piecewise
smooth objects. The 2-dimensional case of such extensions has been proposed
previously in the context of Computer Aided Geometric Design (CAGD)
\cite{Pratt:1996:DupinCyclidesAndSupercyclides,
  Pratt:2002:QuarticSupercyclidesDesign,
  Degen:1994:GeneralizedCyclidesCAGD},
but to the best of our knowledge has not been worked out so far. 

Based on established notions of discrete differential geometry,
we describe in this article the multidimensionally consistent, piecewise smooth
extension of $m$-dimensional Q-nets in $\RP^3$ by adapted surface patches, such
that edge-adjacent patches in one and the same coordinate plane have coinciding
tangent planes along a common boundary curve (see Fig.~\ref{fig:sc_net_intro} for a
2-dimensional example). Relevant aspects of the existing theory are presented in
Sections~\ref{sec:qnets_line_systems}~to~\ref{sec:supercyclides} and
succeeded by new results which are organized as follows:
\begin{itemize}
\item We describe the extension of 2D Q-nets to supercyclidic
nets in Section~\ref{sec:2d_supercyclidic_nets}, emphasizing the underlying 2D system
that governs the extension of Q-nets to fundamental line systems.
\item The 3D system that governs multidimensional supercyclidic nets
	is uncovered and analyzed in Section~\ref{sec:3d_supercyclidic_nets}. We also present
	piecewise smooth conjugate coordinate systems that are locally induced by 3D
	supercyclidic nets and give rise to arbitrary Q-refinements of their support
	structures.
\item We define $m$D supercyclidic nets and develop a transformation theory
thereof that appears as a combination of the existing smooth and discrete
theories in Section~\ref{sec:md_supercyclidic_nets}.
\item Finally, we introduce frames of supercyclidic nets and describe a related
integrable system on projective reflections in Section~\ref{sec:frames}.
\end{itemize}

\begin{figure}[htb]
\begin{center}
\parbox{.42\textwidth}{\includegraphics[width=.42\textwidth]{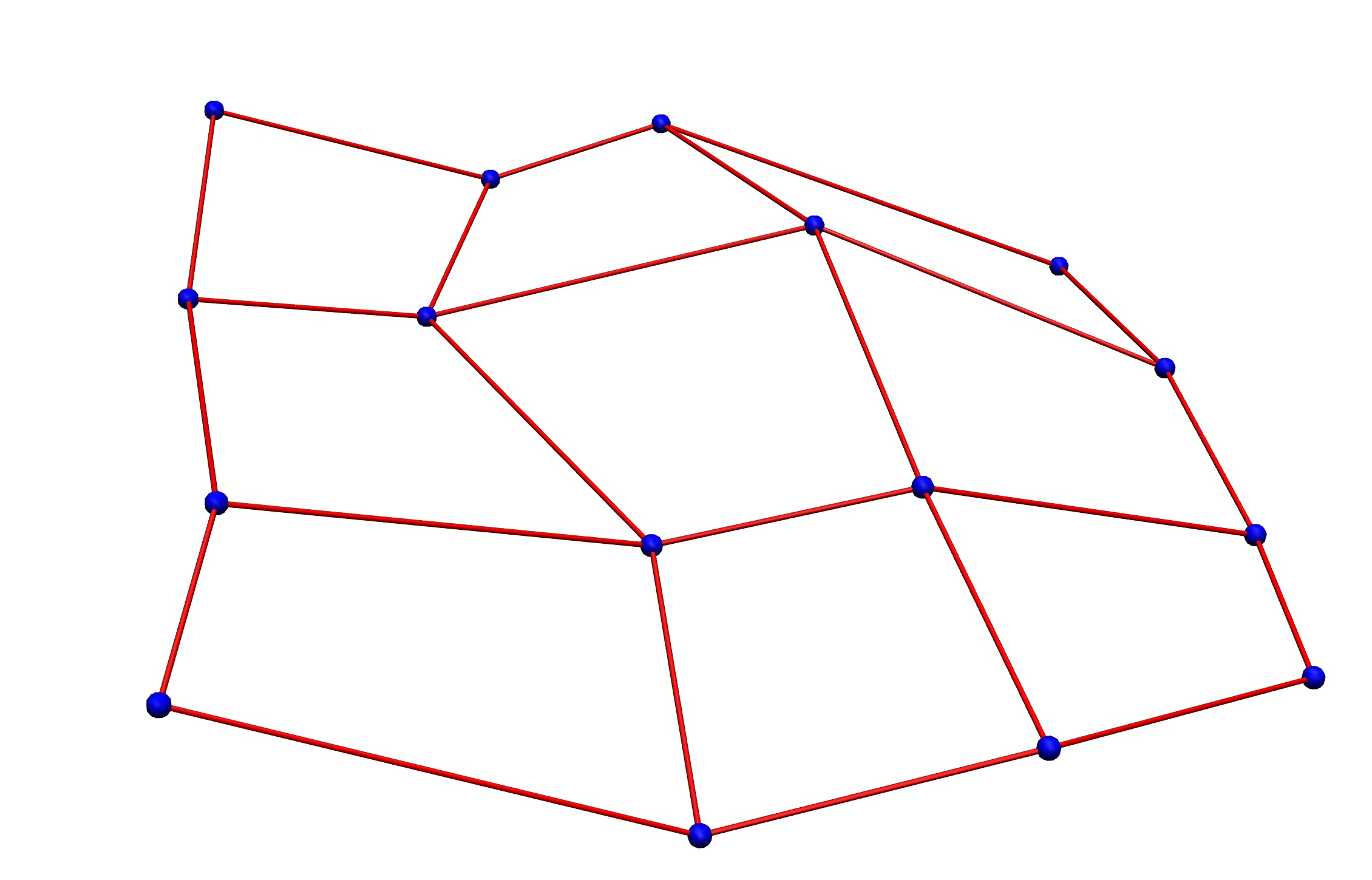}}
\hspace{.07\textwidth}
\parbox{.42\textwidth}{\includegraphics[width=.42\textwidth]{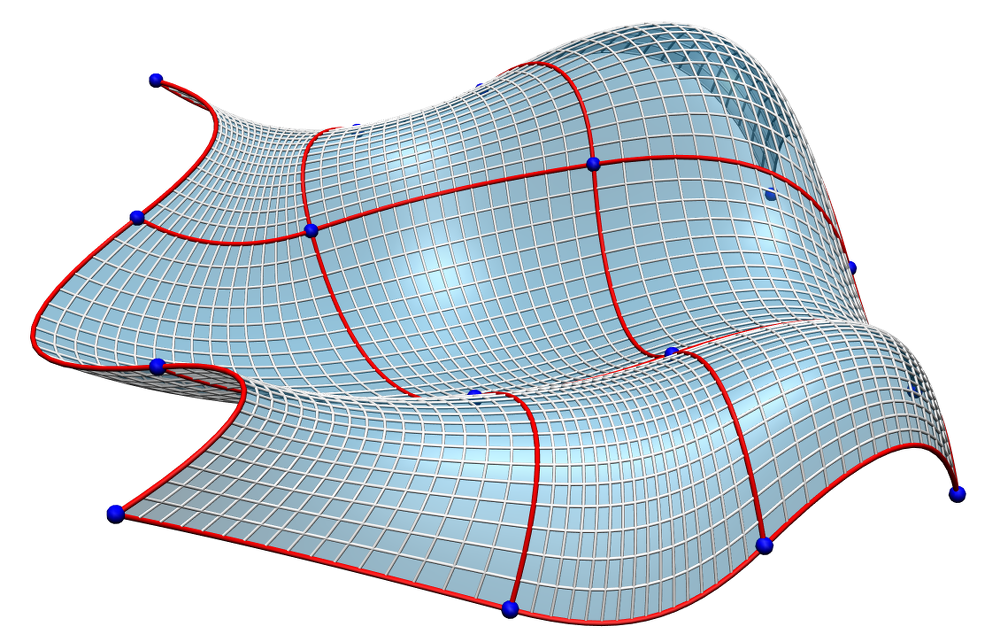}}
\end{center}
\caption{A Q-net and its extension to a supercyclidic net. Each elementary
quadrilateral is replaced by an adapted SC-patch, that is, a supercyclidic patch
whose vertices match the supporting quadrilateral.}
\label{fig:sc_net_intro}
\end{figure}

The present work is a continuation of our previous works on the discretizations
of smooth orthogonal and asymptotic nets by \emph{cyclidic} and \emph{hyperbolic
  nets}, respectively \cite{BobenkoHuhnen-Venedey:2011:cyclidicNets,
  Huhnen-VenedeyRoerig:2013:hyperbolicNets, Huhnen-VenedeySchief:2014:wTrafos}.
We will now review the corresponding theory for cyclidic nets and illustrate the
core concepts that may serve as a structural guideline for this article.

\paragraph{Cyclidic nets} 
Three-dimensional orthogonal nets in $\R^3$ are triply orthogonal coordinate
systems, while 2-di\-men\-sion\-al orthogonal nets are curvature line
parametrized surfaces. Based on the well-known discretization of orthogonal nets
by circular nets
\cite{Bobenko:1999:DiscreteConformalMaps,CieslinskiDoliwaSantini:1997:CircularNets,KonopelchenkoSchief:1998:3DIntegrableLattices,BobenkoMatthesSuris:2003:OrthogonalSystems},
cyclidic nets are defined as circular nets with an additional structure, that
is, as circular nets with orthonormal frames at vertices that determine a unique
adapted \emph{DC-patch} for each elementary quadrilateral with the prescribed
circular points as vertices. The term ``DC-patches'' refers to surface patches
of \emph{Dupin cyclides}\footnote{Dupin cyclides are surfaces in 3-space that
are characterized by the property that all curvature lines are circles.  They
are special instances of supercyclides.}  that are bounded by
curvature lines, i.e., circular arcs.  For a 2-dimensional cyclidic net
its frames are such that the adapted DC-patches constitute a piecewise
smooth $C^1$-surface, cf.  Fig.~\ref{fig:cyclidic_from_circular_intro}.

\begin{figure}[htb]
\begin{center}
\parbox{.26\textwidth}{\includegraphics[width=.26\textwidth]{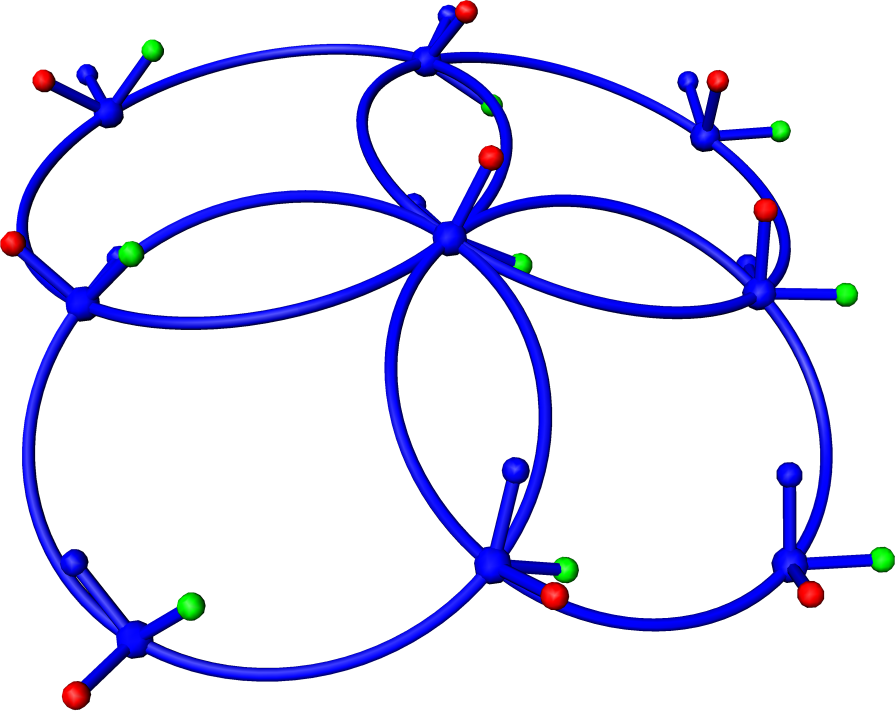}}
\hspace{.07\textwidth}
$\leftrightarrow$
\hspace{.08\textwidth}
\parbox{.26\textwidth}{\includegraphics[width=.26\textwidth]{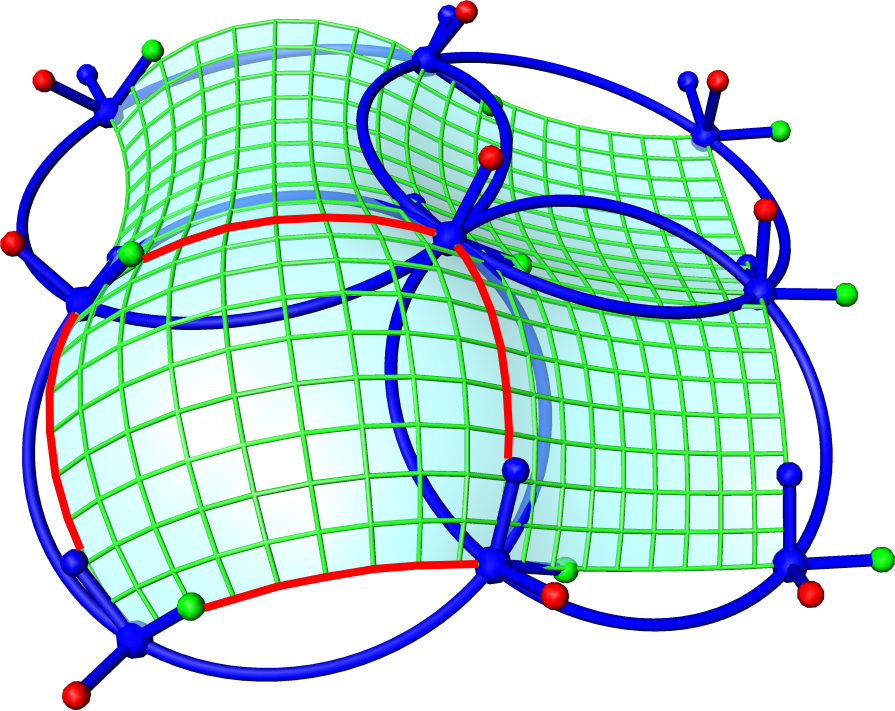}}
\end{center}
\caption{A 2-dimensional cyclidic net may be understood as a piecewise smooth $C^1$-surface
composed of DC-patches.}
\label{fig:cyclidic_from_circular_intro}
\end{figure}

For any DC-patch the tangent planes at the vertices are tangent to a common cone
of revolution.  Therefore, the tangent planes to a 2D cyclidic net at the
vertices constitute a \emph{conical
  net}~\cite{LiuPottmannWallnerYangWang:2006:ConicalNets} and combination with
the (concircular) vertices yields a \emph{principal contact element net}, so
that cyclidic nets comprise the recognized discretizations of curvature line
parametrizations by conical and principal contact element nets
\cite{BobenkoSuris:2008:DDGBook,PottmannWallner:2007:FocalGeometry}.

Going beyond the discretization of curvature line parametrized surfaces, higher
dimensional cyclidic nets provide a discretization of orthogonal coordinate
systems that is motivated by the classical Dupin theorem. The latter states that the
coordinate surfaces of a triply orthogonal coordinate system are curvature line
parametrized surfaces. Accordingly, a 3-dimensional cyclidic net is a
3D circular net with frames at vertices that describe 2D cyclidic nets in each
coordinate plane and such that for each edge of $\Z^3$ one has one unique
circular arc that is a shared boundary curve of all adjacent DC-patches, cf.
Fig.~\ref{fig:3d_cyclidic}.

\begin{figure}[htb]
\begin{center}
\includegraphics[width=.3\textwidth]{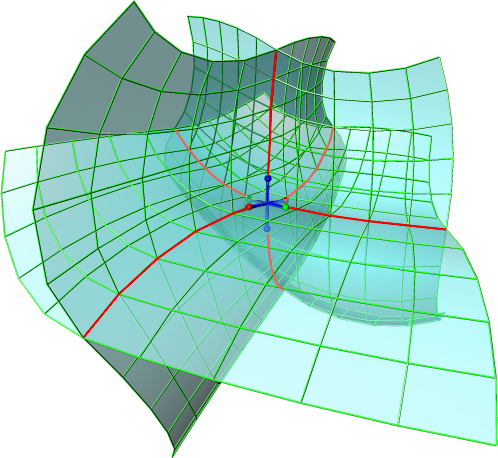}
\end{center}
\caption{Three coordinate surfaces of a 3-dimensional cyclidic net.}
\label{fig:3d_cyclidic}
\end{figure}

Cyclidic nets are piecewise smooth extensions of discrete integrable support
structures by surface patches of a particular class that is in some sense as
easy as possible but at the same time as flexible as necessary: We
use DC-patches for the discretizaton of curvature line parametrized surfaces and
those patches are actually characterized by the geometric property that all
curvature lines are circular arcs, that is, the simplest non-straight curves.
At the same time, DC-patches are flexible enough in order to be fit together to
adapted $C^1$-surfaces. Moreover, the class of DC-patches is preserved by
M\"obius transformations, which is in line with the transformation group
principle for the discretization of integrable
geometries~\cite{BobenkoSuris:2008:DDGBook}.  But there is also a deeper reason
to choose Dupin cyclides for the extension of circular nets, that is, they
actually \emph{embody the geometric characterizations} of the latter in the
following sense: It is not difficult to see that Dupin cyclides are
characterized by the fact that arbitrary selections of curvature lines
constitute a principal contact element net (a circular net with normals at
vertices that are related by reflections in symmetry planes).  As a consequence,
cyclidic nets induce arbitrary refinements of their support structures within
the class of circular nets, by simply adding parameter lines of the adapted
patches to the support structures. After all, Dupin cyclides provide a perfect
link between the theories of smooth and discrete orthogonal nets. This is also
reflected by the theory of transformations of cyclidic nets, which arises by
combination of the corresponding smooth and discrete theories.
\emph{It turns out that supercyclidic patches play an analogous role for
the piecewise smooth extension of Q-nets.}

Analogous to the theory of cyclidic nets is the theory of hyperbolic nets as
piecewise smooth discretizations of surfaces in 3-space that are parametrized
along asymptotic lines. On a purely discrete level, such nets are properly
discretized by quadrilateral nets with planar vertex stars
\cite{Sauer:1937:ProjLinienGeometrie, BobenkoSuris:2008:DDGBook}.  Based on that
discretization, hyperbolic nets arise as an extension of A-nets by means of
hyperbolic surface patches, which is analogous to the extension of 2D circular
nets to 2D cyclidic nets via DC-patches. Many results concerning cyclidic nets
on the one hand and hyperbolic nets on the other hand correspond to each other
under Sophus Lie's line-sphere-correspondence.

\paragraph{Supercyclides in CAGD and applications of supercyclidic nets}
Supercyclides have been introduced as \emph{double Blutel surfaces} by Degen in
the 1980s \cite{Degen:1982:ConjugateConics,Degen:1986:ZweifachBlutel}.  Soon
they found attention in Computer Aided Geometric Design for several reasons. For
example their pleasant geometric properties, nicely reviewed in
\cite{Degen:1994:GeneralizedCyclidesCAGD}, allow to use parts of supercyclides
for blends between quadrics
\cite{AllenDutta:1997:SupercyclidesBlending,FoufouGarnier:2005:ImplicitEuqationsOfSupercyclides}.
Supercyclidic patchworks with differentiable joins have been proposed in the
past
\cite{Pratt:1996:DupinCyclidesAndSupercyclides,Degen:1994:GeneralizedCyclidesCAGD}
as a continuation of the discussion of composite $C^1$-surfaces built from
DC-patches within the CAGD community (see, e.g.,
\cite{Martin:1983:PrincipalPatches,McLean:1985:CyclideSurfaces,MartinDePontSharrock:1986:CyclideSurfaces,DuttaMartinPratt:1993:CyclideSurfaceModeling,SrinivasKumarDutta:1996:SurfaceDesignUsingCyclidePatches}
for the latter).

Another field of potential application for supercyclidic nets is architectural
geometry (see the comprehensive book
\cite{PottmannAsperlHoferKilian:2007:ArchitecturalGeometry} for an introduction
to that synthesis of architecture and geometry), because they are aesthetically
appealing freeform surfaces that can be approximated at arbitrary precision by
flat panels. On the other hand, supercyclidic nets extend their supporting Q-net
and may be seen as a kind of 3D texture for that support structure. Purely Dupin
cyclidic nets in 3-space, which are special instances of supercyclidic nets,
have already been discussed in the context of \emph{circular arc structures} in
\cite{BoEtAl:2011:CAS}, while
\cite{KaeferboeckPottmann:2012:DiscreteAffineMinimal,ShiWangPottmann:2013:RationalBilinearPatches}
provides an analysis of hyperbolic nets that aims at the application thereof in
architecture.

\section{Q-nets and discrete torsal line systems}
\label{sec:qnets_line_systems}

Q-nets are discrete versions of classical conjugate nets and closely related to
discrete torsal line systems. Before starting our discussion of those discrete
objects, we recall a characterization of classical conjugate nets and their
transformations that can be found in \cite{Eisenhart:1923:Transformations} or
\cite[Chap.~1]{BobenkoSuris:2008:DDGBook}.

\begin{definition}[Conjugate net]
\label{def:conjugate_net}
A map $x : \R^m \to \R^N,\, N \ge 3$, is called an \emph{$m$-dimensional
conjugate net in $\R^N$} if at every point of the domain and for all pairs $1
\le i \ne j \le m$ one has $\partial_{ij}x \in \vspan{\partial_i x, \partial_j
x} \iff \partial_{ij}x = c_{ji} \partial_i x + c_{ij} \partial_j
x$.\footnote{Loosely speaking, infinitesimal quadrilaterals formed by parameter
lines of a conjugate net are planar.}
\end{definition}

\begin{definition}[F-transformation of conjugate nets]
\label{def:f-trafo_smooth}
Two $m$-dimensional conjugate nets $x,x^+$ are said to be related by a
\emph{fundamental transformation} (\emph{F-transformation}) if at every point
of the domain and for each $1 \le i \le m$ the three vectors
$\partial_i x, \partial_i x^+,$ and $\delta x = x^+ - x$ are coplanar. The net
$x^+$ is called an \emph{F-transform} of the net $x$.
\end{definition}

The above F-transformations of conjugate nets exhibit the following,
Bianchi-type permutability properties. The existence of associated
transformations with permutability properties of that kind is classically
regarded as a key feature of integrable systems.

\begin{theorem}[Permutability properties of F-transformations of
conjugate nets]
\label{thm:permutability_smooth}
\hfill
\begin{enumerate}[(i)]
\item 
	Let  $x$ be an $m$-di\-men\-sional conjugate net, and let $x^{(1)}$ and
	$x^{(2)}$ be two of its F-transforms.  Then there exists a 2-parameter family
	of conjugate nets $x^{(12)}$ that are F-transforms of both $x^{(1)}$ and
	$x^{(2)}$.  The corresponding points of the four conjugate nets $x$,
	$x^{(1)}$, $x^{(2)}$ and $x^{(12)}$ are coplanar.
\item 
	Let  $x$ be an $m$-dimensional conjugate net. Let $x^{(1)}$, $x^{(2)}$ and
	$x^{(3)}$ be three of its F-transforms, and let three further
	conjugate nets $x^{(12)}$, $x^{(23)}$ and $x^{(13)}$ be given such that
	$x^{(ij)}$ is a simultaneous F-transform of $x^{(i)}$ and $x^{(j)}$.
	Then generically there exists a unique conjugate net $x^{(123)}$ that is an
	F-transform of $x^{(12)}$, $x^{(23)}$ and $x^{(13)}$.  The net
	$x^{(123)}$ is uniquely defined by the condition that for every permutation
	$(ijk)$ of $(123)$ the corresponding points of $x^{(i)}$, $x^{(ij)}$,
	$x^{(ik)}$ and $x^{(123)}$ are coplanar.
\end{enumerate}
\end{theorem}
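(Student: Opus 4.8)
The plan is to use the fact that adjoining an F-transform to an $m$-dimensional conjugate net amounts to adjoining one \emph{discrete} coordinate direction. Indeed, if $x^{+}$ is an F-transform of $x$, then the coplanarity of $\partial_i x$, $\partial_i x^{+}$ and $\delta x = x^{+}-x$ in Definition~\ref{def:f-trafo_smooth} is exactly the planarity of the infinitesimal quadrilateral with vertices $x$, $x+\epsilon\,\partial_i x$, $x^{+}+\epsilon\,\partial_i x^{+}$, $x^{+}$, i.e.\ the Q-net planarity condition for the mixed elementary quadrilateral spanned by the continuous direction $i$ and the discrete transformation direction. With this reading, parts (i) and (ii) become the closing of an elementary combinatorial square, resp.\ cube, in the discrete directions over the continuous $m$-dimensional domain; they are the two- and three-discrete-direction instances of the (multidimensional) consistency of Q-nets. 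I would present the proof directly in this picture, the only subtlety being that part (ii) should be read with the $x^{(ij)}$ coplanar with $x,x^{(i)},x^{(j)}$ (as they are when produced by (i)), which is the geometrically relevant situation and what makes the defining incidences of (ii) well posed.

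For (i) I would use the ansatz $x^{(12)} = x + \alpha\,\delta^{(1)} + \beta\,\delta^{(2)}$, where $\delta^{(k)} = x^{(k)}-x$ and $\alpha,\beta\colon\R^m\to\R$ are to be determined; this forces the four corresponding points to be coplanar. In the generic case the F-transform relations read $\partial_i x^{(1)} = a_i\,\partial_i x + b_i\,\delta^{(1)}$ and $\partial_i x^{(2)} = c_i\,\partial_i x + d_i\,\delta^{(2)}$ for suitable functions $a_i,\dots,d_i$, so $\partial_i x^{(12)}$ becomes an explicit combination of $\partial_i x$, $\delta^{(1)}$, $\delta^{(2)}$ involving $\alpha,\beta,\partial_i\alpha,\partial_i\beta$. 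Requiring that $x^{(12)}$ be a conjugate net and a simultaneous F-transform of $x^{(1)}$ and $x^{(2)}$ then reduces, after elimination, to a first-order linear system $\partial_i\alpha = A_i(\alpha,\beta)$, $\partial_i\beta = B_i(\alpha,\beta)$ whose coefficients are built from $a_i,\dots,d_i$ and the coefficients $c_{ij}$ of $x$ from Definition~\ref{def:conjugate_net}. One checks that this system is compatible ($\partial_j A_i = \partial_i A_j$, etc.), using the Darboux-type relations satisfied by the $c_{ij}$ of $x$ and by $a_i,\dots,d_i$ — these relations encode that $x^{(1)},x^{(2)}$ are conjugate nets and F-transforms of $x$. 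Hence the solution is fixed by the value $(\alpha,\beta)$ at a single point of the domain, which yields the asserted $2$-parameter family; coplanarity of the four nets holds identically by construction.

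For (ii) I would fix a point of the domain and argue with the values there. Since each $x^{(ij)}$ lies in the plane $\langle x, x^{(i)}, x^{(j)}\rangle$, the seven points $x$, $x^{(i)}$, $x^{(ij)}$ span a common affine $3$-space $V$, generically of dimension exactly $3$. Inside $V\cong\R^3$ the three planes $\Pi_i = \langle x^{(i)}, x^{(ij)}, x^{(ik)}\rangle$ meet pairwise in lines and, generically, the three concur in one point; define $x^{(123)}$ to be that point (it depends smoothly on the base point). Repeating the propagation argument of (i), now organized around the combinatorial $3$-cube with vertices $x$, $x^{(i)}$, $x^{(ij)}$, $x^{(123)}$, one verifies that $x^{(123)}$ is a conjugate net and an F-transform of each $x^{(ij)}$: the planarity of the three cube faces through $x^{(123)}$ is precisely the discrete F-transform condition in the discrete directions, and the remaining (continuous-direction) conditions propagate compatibly. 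Uniqueness is immediate, since $x^{(123)}$ must lie on all three $\Pi_i$. The main obstacle in all of this is the compatibility (zero-curvature) check for the overdetermined first-order systems — it is entirely computational but rests on the full set of Darboux relations among the coefficients of $x$ and the transformation coefficients — together with bookkeeping of the genericity hypotheses (independence of the relevant vectors, and in (ii) the three planes meeting in a point rather than a line), which is what ``generically'' in part (ii) refers to.
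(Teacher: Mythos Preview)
The paper does not actually give its own proof of this theorem: it is stated as background, with references to Eisenhart and to Chapter~1 of Bobenko--Suris, and the only argument the paper itself offers is the remark following Theorem~\ref{thm:permutability_qnets} that the smooth statement follows from its discrete analogue by performing a refinement limit in the net directions while keeping the transformation directions discrete.

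Your approach is the direct classical one (essentially that of Bobenko--Suris, Chapter~1, and going back to Eisenhart and Jonas): encode $x^{(12)}$ by scalar fields $\alpha,\beta$ through the planar ansatz, reduce the F-transform and conjugate-net conditions to a compatible first-order linear system for $(\alpha,\beta)$, and for (ii) intersect the three planes $\Pi_i$ pointwise and then propagate. This is correct as a sketch. Your caveat about reading (ii) with $x^{(ij)}$ coplanar to $x,x^{(i)},x^{(j)}$ is well taken and in fact necessary in $\R^N$ with $N>3$: without it the seven points need not span only a $3$-space, and the three planes $\Pi_i$ then have empty intersection generically; this hypothesis is implicit in the word ``generically'' and is automatic when the $x^{(ij)}$ are produced via (i).

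Comparing the two routes: the paper's suggested path (prove the discrete permutability via $4$D consistency of the Q-net $3$D system, then pass to the limit) unifies the smooth and discrete pictures and makes permutability a structural corollary, at the cost of a nontrivial convergence argument. Your path is self-contained and more elementary, but the compatibility (zero-curvature) verification you defer as ``entirely computational'' is precisely where the substance lies --- it is the smooth incarnation of that same $4$D consistency, and in a fully written proof it should be carried out, not just asserted.
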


Although we gave an affine description, conjugate nets and their transformations
are objects of projective differential geometry.  Accordingly, we consider the
theory of discrete conjugate nets in $\RP^N$ and not in $\R^N$. Before coming to
that, we explain some notation that will be used throughout this article.

\paragraph{Notation for discrete maps}
Discrete maps are fundamental in discrete differential geometry. We are
mostly concerned with discrete maps defined on cells of dimension
$0,1$, or $2$ of $\Z^m$, that is, maps defined on vertices, edges, or elementary
quadrilaterals.

Let $e_1,\ldots,e_m$ be the canonical basis of the m-dimensional
lattice~$\Z^m$. For $k$ pairwise distinct indices $i_1,\ldots,i_k \in
\left\{ 1,\dots,m \right\}$ we denote by
\begin{equation*}
\coordsurf{i_1 \dots i_k} = \operatorname{span}_\Z(e_{i_1},\dots,e_{i_k})
\end{equation*}
the $k$-dimensional coordinate plane of $\Z^m$ and by
\begin{equation*}
\cell{i_1 \dots i_k}(z) = \left\{ z + \varepsilon_{i_1} e_{i_1} + \dots +
\varepsilon_{i_k} e_{i_k} \mid \varepsilon_{i} = 0,1 \right\}
\end{equation*}
the $k$-cell at~$z$ spanned by $e_{i_1},\ldots,e_{i_k}$, respectively.

We use upper indices $i_1,\dots,i_k$
to describe maps on $k$-cells as maps on $\Z^m$ by identifying the
$k$-cell $\cell{i_1 \dots i_k}(z)$ with its basepoint $z$
\begin{equation*}
f^{i_1 \dots i_k}(z) = f(\cell{i_1 \dots i_k}(z)).
\end{equation*}
For a map $f$ defined on $\Z^m$ we use lower indices to indicate shifts in
coordinate directions 
\begin{equation*}
f_i(z) = f(z+e_i), \quad
f_{ij}(z) = f(z + e_i + e_j), \quad \dots
\end{equation*}
Often we omit the argument of $f$, writing
\begin{equation*}
f = f (z), \quad f_i = f (z + ei), \quad f_{-i} = f(z-e_i), \quad \dots
\end{equation*}
and analogous for maps defined on cells of higher dimension.

\paragraph{Q-nets and discrete torsal line systems}
Smooth conjugate nets are discretized within discrete differential geometry by
quadrilateral meshes with planar faces. The planarity condition is a straight
forward discretization of the smooth characteristic property $\partial_{ij} f
\in \vspan{\partial_i f, \partial_j f}$. In the 2-dimensional case, this
discretization has been proposed by Sauer~\cite{Sauer:1937:ProjLinienGeometrie}
and was later generalized to the multidimensional case
\cite{DoliwaSantini:1997:QnetsAreIntegrable,BobenkoSuris:2008:DDGBook}.

\begin{definition}[Q-net]
\label{def:qnet}
A map $x:\Z^m \to \RP^N$, $N\ge3$, is called an \emph{m-dimensional Q-net} or
\emph{discrete conjugate net} in $\RP^N$ if
for all pairs $1 \le i < j \le m$ the elementary quadrilaterals
$(x,x_i,x_{ij},x_j)$ are planar.
\end{definition}

Closely related to Q-nets in $\RP^N$ are configurations of lines in $\RP^N$
with, e.g., $\Z^m$ combinatorics, such that neighbouring lines intersect.  We
denote the manifold of lines in $\RP^N$ by
\begin{equation*}
\linespace^N := \left\{ \text{Lines in } \RP^N \right\} \cong
\operatorname{Gr}(2,\R^{N+1}),
\end{equation*}
where $\operatorname{Gr}(2,\R^{N+1})$ is the Grassmanian of 2-dimensional linear
subspaces of $\R^{N+1}$.

\begin{definition}[Discrete torsal line system]
\label{def:line_congruence}
A map $l : \Z^m \to \linespace^N$, $N \ge 3$, is called an \emph{m-dimensional
discrete torsal line system} in $\RP^N$ if at each $z \in \Z^m$ and for all $1
\le i \le m$ the neighbouring lines $l$ and $l_i$ intersect.
We say that $l$ is \emph{generic} if
\begin{enumerate}[(i)]
\item For each elementary quadrilateral of $\Z^m$ the lines associated with
  opposite vertices are skew (and therefore span a unique 3-space that contains
	all four lines of the quadrilateral).
\item
If $m \ge 3$ and $N \ge 4$, the space spanned by any quadruple
$(l,l_i,l_j,l_k)$ of lines, $1 \le i < j < k \le m$, is 4-dimensional.
\end{enumerate}
A 2-dimensional line system is called a \emph{line congruence} and a
3-dimensional line system is called a \emph{line complex}.
\end{definition}

\enlargethispage{\baselineskip}

Recently, line systems on triangle meshes with non-intersecting neighboring
lines were studied
\cite{WangJiangBompasWallnerPottmann-2013-DiscreteLineConSGP}. To distinguish
the two different types of line systems we call the systems with intersecting
neighboring lines \emph{torsal}. Discrete torsal line systems and Q-nets are
closely related \cite{DoliwaSantiniManas:2000:TransformationsOfQnets}.

\begin{definition}[Focal net]
\label{def:focal_net}
For a discrete torsal line system $l : \Z^m \to \linespace^N$ and a direction
$i \in \left\{ 1,\ldots,m \right\}$, the \emph{i-th focal net} $f^i : \Z^m \to
\RP^N$ is defined by
\begin{equation*}
f^i(z) := l(z) \cap l(z + e_i).
\end{equation*}
The planes spanned by adjacent lines of the system are called \emph{focal
planes}. Focal points and focal planes of a discrete torsal line system are
naturally associated with edges of $\Z^m$, cf.~Fig.~\ref{fig:elem_quad_line_congruence}.
\end{definition}

\begin{figure}[ht]
 \input{ 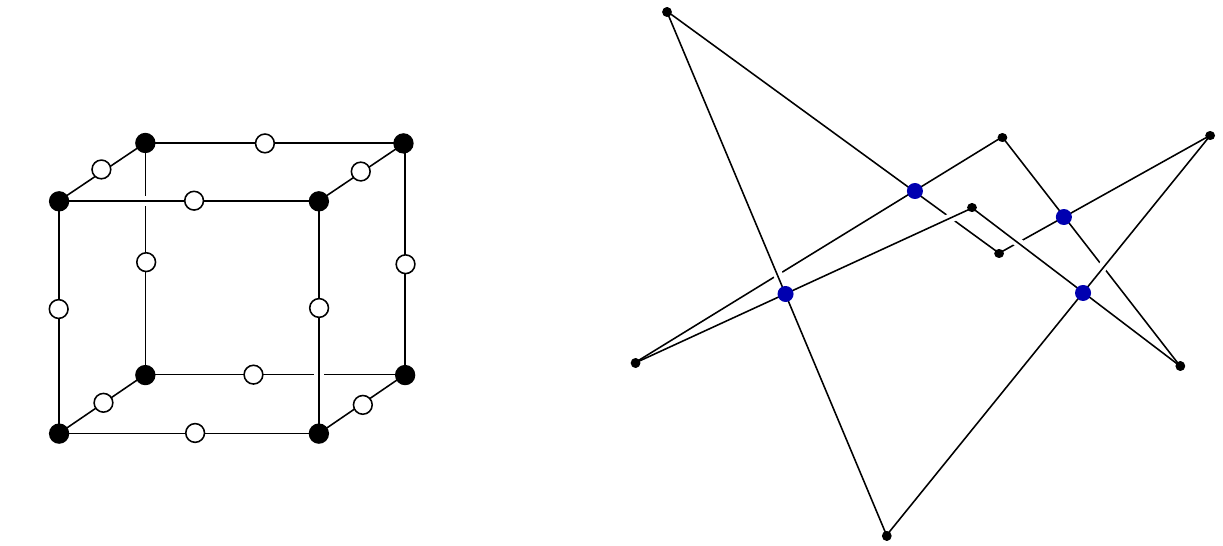_t } 
\caption{An elementary cube of a discrete torsal line system:
combinatorial and geometric.}
\label{fig:elem_quad_line_congruence}
\end{figure}

\begin{definition}[Edge systems of Q-nets and Laplace transforms]
\label{def:laplace}
Given a Q-net $x : \Z^m \to \RP^N$ and a direction $i \in \left\{ 1,\ldots,m
\right\}$, we say that the extended edges of direction~$i$ constitute the
\emph{$i$-th edge system} $e^i : \Z^m \to \linespace^N$. By definition, the
$i$-th edge system is a discrete torsal line system, whose different focal nets
are called \emph{Laplace transforms} of $x$.  Accordingly, the vertices of the
different Laplace transforms are called \emph{Laplace points} of certain
directions with respect to the elementary quadrilaterals of $x$.
\end{definition}

Edge systems are called \emph{tangent systems}
in~\cite{DoliwaSantiniManas:2000:TransformationsOfQnets}. We prefer to call them
edge systems to distinguish them from the tangents systems of supercyclidic nets
(formally defined only in Section~\ref{sec:frames}), which consist of tangents to
surface patches at vertices of a supporting Q-net.

\smallskip

From simple dimension arguments one obtains the following

\begin{theorem}[Focal nets are Q-nets]
\label{thm:focal_nets_are_qnets}
For a generic discrete torsal line system $l : \Z^m \to \linespace^N$, $N \ge
4$, each focal net is a Q-net in $\RP^N$.
Moreover, focal quadrilaterals of the type $(f^i,f^i_i,f^i_{ij},f^i_j)$
are planar for arbitrary $N$.\footnote{For $m \ge 3$ there are focal
quadrilaterals of the type $(f^i,f^i_j,f^i_{jk},f^i_k)$, $i \ne j \ne k
\ne i$, which are generically not planar in the case $N = 3$.}
\end{theorem}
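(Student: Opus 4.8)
The plan is to verify planarity of the relevant focal quadrilaterals by exhibiting, for each focal point, enough incidences with the lines of the system to force it into a low-dimensional space. Recall that $f^i(z) = l(z)\cap l(z+e_i)$ lies on both lines $l$ and $l_i$; the same point shifted in direction $j$, namely $f^i_j = l_j \cap l_{ij}$, lies on $l_j$ and $l_{ij}$. So all four points of the quadrilateral $(f^i, f^i_i, f^i_{ij}, f^i_j)$ — wait, more carefully: the quadrilateral $(f^i, f^i_i, f^i_{ij}, f^i_j)$ has vertices on the lines $l, l_i$ (for $f^i$); on $l_i, l_{ii}$ — this index doesn't make sense, so the intended reading is the quadrilateral whose four vertices are $f^i(z), f^i(z+e_i), f^i(z+e_i+e_j), f^i(z+e_j)$. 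These four focal points all lie in a configuration built from the six lines $l, l_i, l_j, l_{ij}, l_{ii}, \dots$; the key observation is that $f^i$ and $f^i_j$ both lie on the focal plane spanned by $l$ and $l_i$ need not hold, but $f^i$ and $f^i_i$ both lie on $l_i$, while $f^i$ and $f^i_j$ both lie on a common line only in special cases. So the cleaner route is: $f^i, f^i_i$ lie on $l_i$; $f^i_j, f^i_{ij}$ lie on $l_{ij}$; and the edge system in direction $j$ connects these. Concretely, $f^i$ and $f^i_j = f^i(z+e_j)$ both lie on the plane $\vspan{l_i, l_{ij}}$? No — I would instead use that the four focal points of a single elementary $ij$-square together with the relevant lines span only a 3-space, which I verify by a dimension count using genericity assumption (i) applied to the quadrilaterals $(l, l_i, l_{ij}, l_j)$ and observing that the four focal points all lie in the 3-space spanned by, say, $l_i$ and $l_{ij}$, since $f^i \in l_i$, $f^i_i \in l_i$, $f^i_j \in l_{ij}$, $f^i_{ij}\in l_{ij}$ — two skew or intersecting lines span at most a 3-space, so all four focal points lie in $\vspan{l_i, l_{ij}} \subseteq \RP^3$, hence the quadrilateral is planar whenever this span is exactly 3-dimensional, and degenerates (still planar) otherwise.

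First I would set up the general $N \ge 4$ statement. Fix $z$ and a direction $i$; I want to show $(f^i, f^i_i, f^i_{ij}, f^i_k, \dots)$ — the full first-focal-net quadrilateral $(f^i, f^i_i, f^i_{ij}, f^i_j)$ for each pair $i\neq j$ — is planar, which by Definition~\ref{def:qnet} is exactly what it means for $f^i$ to be a Q-net. For the mixed quadrilaterals $(f^i, f^i_j, f^i_{jk}, f^i_k)$ with $j,k \neq i$, I would use genericity assumption (ii): the four lines $l, l_j, l_k, l_{jk}$ lie in the span of $(l, l_j, l_k)$ which is 4-dimensional, and — after intersecting with $l_i$ appropriately — the four focal points $f^i(z), f^i(z+e_j), f^i(z+e_j+e_k), f^i(z+e_k)$ lie on the lines $l_i, l_{ij}, l_{ijk}, l_{ik}$ respectively; I would argue that these four lines, all meeting the "axis" line $l_i$ in a chain (consecutive ones intersect because they come from a torsal system), together span only a 4-space, forcing the four focal points, which additionally lie on $l_i$'s consecutive neighbours, into a common plane. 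The dimension bookkeeping here is the substantive part: I need that the relevant collection of lines spans exactly the dimension genericity guarantees and no more, and that the focal points are not "accidentally" spread out.

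The step I expect to be the main obstacle is the mixed-type quadrilateral $(f^i, f^i_j, f^i_{jk}, f^i_k)$ in the case $N \ge 4$ — getting the dimension count exactly right so that four points known to lie in a 4-space are shown to lie in a plane. The affine/planar quadrilaterals $(f^i, f^i_i, f^i_{ij}, f^i_j)$ for arbitrary $N$ are easier: as sketched, $f^i$ and $f^i_i$ both lie on $l_i$, and $f^i_j$ and $f^i_{ij}$ both lie on $l_{ij}$, so all four lie in $\vspan{l_i, l_{ij}}$, which has projective dimension at most $3$; but a planar quadrilateral needs them in a \emph{plane}, i.e. dimension $2$, so I still need one more incidence — namely that $f^i$ and $f^i_j$ both lie on the focal plane $\vspan{l, l_i}\cap\vspan{l_j, l_{ij}}$, or alternatively that the line through $f^i$ and $f^i_j$ meets the line through $f^i_i$ and $f^i_{ij}$, which follows from the torsality of the $j$-edge system of the Q-net $f^i$ once I have established that intersecting-neighbour property. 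I would organize the argument so that the planarity of these quadrilaterals comes out of the fact that $f^i$ restricted to the $ij$-plane is the $i$-th focal net of the line congruence $(l, l_i, l_{ij}, l_j)$ in the 3-space they span, where the classical (2D) statement is elementary. So the logical skeleton is: (1) reduce to 2D for the first type via restriction to coordinate planes; (2) handle the 2D case by the $\RP^3$ incidence argument; (3) for the mixed type, do the 4-dimensional span computation using genericity (ii). \qed
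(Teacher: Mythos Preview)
Your overall strategy---dimension counting using the incidences forced by torsality---is exactly what the paper means by ``simple dimension arguments'' (the paper gives no further detail). But your execution contains two concrete errors that you should fix.

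\textbf{Type $(f^i,f^i_i,f^i_{ij},f^i_j)$.} You correctly observe that $f^i,f^i_i\in l_i$ and $f^i_j,f^i_{ij}\in l_{ij}$, but then write that $\vspan{l_i,l_{ij}}$ ``has projective dimension at most $3$'' and go hunting for an extra incidence. This is the mistake: $l_i$ and $l_{ij}$ are \emph{adjacent} lines of the torsal system (they differ by a shift in direction $j$), so they intersect, and hence $l_i\vee l_{ij}$ is a \emph{plane}. All four focal points lie in it, and you are done---for arbitrary $N$. (The paper states precisely this in the proof of Proposition~\ref{prop:fundamental_line_systems}: ``The vertices of $Q^{ij}$ are contained in the plane $l_i\vee l_{ij}$.'') Your subsequent attempt to find a further incidence is unnecessary and the 2D reduction you propose is a detour.

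\textbf{Type $(f^i,f^i_j,f^i_{jk},f^i_k)$, $N\ge4$.} Your dimension bookkeeping here is off in several places: the span of $(l,l_j,l_k)$ is a $3$-space, not $4$-dimensional (two of the three pairs intersect); and the four lines $l_i,l_{ij},l_{ijk},l_{ik}$ span a $3$-space, not a $4$-space (they form a torsal quadrilateral). Knowing that the four focal points lie in a single $3$-space is not enough. The clean argument is this: the four focal points lie on the ``bottom'' quadrilateral of lines $(l,l_j,l_{jk},l_k)$, which by genericity~(i) spans a $3$-space $U$; they also lie on the ``top'' quadrilateral $(l_i,l_{ij},l_{ijk},l_{ik})$, spanning a $3$-space $U_i$. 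By genericity~(ii) the whole cube spans a $4$-space, so $U\ne U_i$, and two distinct $3$-spaces inside a $4$-space meet in a plane. That plane contains all four focal points. This also explains the footnote: in $\RP^3$ one has $U=U_i=\RP^3$ and the argument collapses.
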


\paragraph{Underlying 3D systems that govern Q-nets and discrete torsal line
systems}\\
Q-nets in $\RP^N, N \ge 3$, and also discrete torsal line systems in
$\RP^N, N \ge 4$, are each governed by an associated discrete 3D system:
generic data at seven vertices of a cube determine the remaining 8th
value uniquely, cf. Fig.~\ref{fig:3d_system}. In both cases, this is evident if one
considers intersections of subspaces and counts the generic dimensions with
respect to the ambient space. For example, consider the seven vertices
$x,x_i,x_{ij}$, $i,j = 1,2,3$, $i \ne j$, of an elementary cube of a Q-net in
$\RP^N$. Those points necessarily lie in a 3-dimensional subspace and determine
the value $x_{123}$ uniquely as intersection point of the three planes
$\pi^{12}_3 = x_3 \vee
x_{13} \vee x_{23}$,\ $\pi^{23}_1 = x_1 \vee x_{12} \vee x_{13}$, and
$\pi^{13}_2 = x_2 \vee x_{12} \vee x_{23}$.

\begin{figure}[htb]
 \input{ 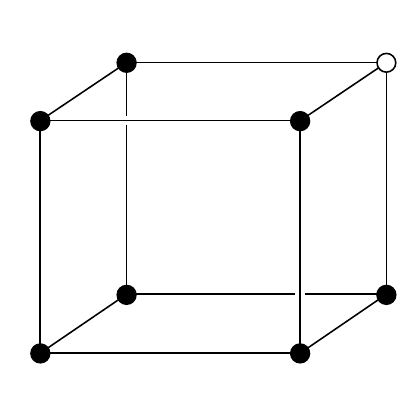_t } 
\caption{The seven values $f,f_i,f_{ij}$ (vertices of a Q-net or lines of a
discrete torsal line system) determine the remaining value $f_{123}$ uniquely.}
\label{fig:3d_system}
\end{figure}

3D systems of the above type allow to propagate Cauchy data
\begin{equation*}
f|_{\coordsurf{ij}}, \quad 1 \le i < j \le 3
\end{equation*}
on the coordinate planes $\coordsurf{12},\coordsurf{23},\coordsurf{13}$ to the
whole of $\Z^3$ uniquely. In higher dimensions, the fact that the a~priori
overdetermined propagation of Cauchy data
\begin{equation*}
f|_{\coordsurf{ij}}, \quad 1 \le i < j \le m
\end{equation*}
to the whole of $\Z^m$ is well-defined is referred to as \emph{multidimensional
consistency} of the underlying system and understood as its \emph{discrete
integrability}. Simple combinatorial considerations show that for
discrete $m$D systems that allow to determine the value at one vertex of an $m$D
cube from the values at the remaining vertices, the $(m+1)$D consistency
implies $(m+k)$D consistency for arbitrary $k \ge 1$. Indeed, this is the case
for the systems governing Q-nets and discrete torsal line systems, cf.
\cite{BobenkoSuris:2008:DDGBook}. For future reference, we capture the above in

\begin{theorem}
  \label{thm:4d_consistency}
	Discrete torsal line systems in $\RP^N$, $N \ge 4$, as well as Q-nets in
	$\RP^M$, $M \ge 3$, are each governed by $m$D consistent 3D systems.
\end{theorem}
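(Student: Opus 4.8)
The plan is to reduce the statement to $4$D consistency. Both $3$D systems in question have already been exhibited above: for a Q-net in $\RP^N$ the missing eighth vertex of an elementary cube is the intersection point $x_{123}=\pi^{12}_3\cap\pi^{23}_1\cap\pi^{13}_2$ of three planes, a single point once the seven given vertices are in general position; and for a generic discrete torsal line system in $\RP^N$, $N\ge 4$, the missing eighth line is, by the dual incidence count sketched in the text, determined by its incidences with $l_{12},l_{13},l_{23}$, uniquely so for generic data. Given these $3$D systems, the combinatorial principle recalled just before the statement reduces the theorem to showing that each system is $4$D consistent, i.e.\ that for generic Cauchy data on the four $3$-faces of a $4$-cube through the origin the value produced at the vertex $e_1+e_2+e_3+e_4$ is the same along each of the four possible routes (one through each shifted $3$-face $\cell{jkl}(e_i)$).

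For Q-nets I would run the projective dimension count going back to Doliwa and Santini. Fix a $4$-cube; for $N\ge 4$ its sixteen generic vertices span a $4$-dimensional projective subspace $V\subseteq\RP^N$, and for each $i\in\{1,2,3,4\}$ the seven known vertices of the shifted $3$-face $\cell{jkl}(e_i)$, with $\{i,j,k,l\}=\{1,2,3,4\}$, span a hyperplane $V_i$ of $V$ because they form a generic $3$-cube of the Q-net. The key observation is then that each of the three planes occurring in the $3$D-system construction of $x_{1234}$ from the face $\cell{jkl}(e_i)$ --- being spanned by three vertices whose index sets all contain a common pair $\{i,a\}$ --- lies in, hence generically coincides with, the $2$-plane $V_i\cap V_a$. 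Consequently this construction gives $x_{1234}=V_i\cap V_j\cap V_k\cap V_l=\bigcap_{a=1}^4 V_a$, a single point of $\RP^4$ independent of $i$, which is exactly $4$D consistency. The case $N=3$ then follows as in \cite[Ch.~2]{BobenkoSuris:2008:DDGBook}.

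For discrete torsal line systems in $\RP^N$ with $N\ge 4$ I would transfer the result through focal nets. By Theorem~\ref{thm:focal_nets_are_qnets} the $i$-th focal net $f^i$ of a generic such system is a Q-net, while conversely $l$ is recovered locally from $f^i$ via $l(z)=f^i(z-e_i)\vee f^i(z)$; moreover, starting from any Q-net $g$ the lines $l(z):=g(z-e_i)\vee g(z)$ already form a torsal line system with $i$-th focal net $g$, since the neighbouring lines to be checked lie in a common focal plane that is a planar face of $g$. As this correspondence relates values on vertices and edges and carries generic data to generic data, it matches the Cauchy problems of the two systems, so the $4$D consistency of torsal line systems follows from that of Q-nets. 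The hypothesis $N\ge 4$ is essential at exactly this point: for $N=3$ the focal net $f^i$ is in general not a Q-net --- some focal quadrilaterals fail to be planar, cf.\ the footnote to Theorem~\ref{thm:focal_nets_are_qnets} --- matching the fact that in $\RP^3$ the common transversals of three skew lines form a regulus rather than a single line.

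The main obstacle is not the consistency bookkeeping but the genericity management: one has to fix an open dense set of configurations on which all of the above dimension counts hold ($\dim V=4$, each $V_i$ a genuine hyperplane of $V$, opposite lines of an elementary quadrilateral skew, consecutive focal points distinct) and to check that this genericity is preserved under propagation, so that ``generic'' is a coherent hypothesis over all of $\Z^m$. Once $4$D consistency is established, the passage to full $m$D consistency is purely combinatorial via the principle quoted before the statement, which completes the proof.
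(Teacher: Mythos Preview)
The paper does not actually prove this theorem: it is recorded ``for future reference'' immediately after the informal discussion of $3$D systems, with the proof delegated to \cite{BobenkoSuris:2008:DDGBook}. So there is no paper-proof to compare against, only the standard argument in that reference.

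Your treatment of Q-nets is that standard argument and is correct. Your treatment of discrete torsal line systems, however, has a genuine gap. On a $4$-cube of lines the direction-$1$ focal points form only a $3$-cube (in directions $2,3,4$), with seven known vertices and the eighth, $f^1_{234}=l_{234}\cap l_{1234}$, unknown. Applying the Q-net $3$D rule to those seven points yields a point $p$, but nothing you have said forces $p\in l_{234}$; without that, $p$ is not a point of the sought line and the three (let alone four) candidate values of $l_{1234}$ are not shown to agree. More generally, the correspondence $l\leftrightarrow f^1$ shifts the lattice: Cauchy data for $l$ on the coordinate planes $\coordsurf{ij}$ does \emph{not} determine $f^1$ on those planes when $1\notin\{i,j\}$, so ``it matches the Cauchy problems of the two systems'' is exactly the step that needs proof and is missing.

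The clean route---and the one in \cite{BobenkoSuris:2008:DDGBook}---is the direct dimension count you already carried out for Q-nets, transcribed to lines. For $N\ge 5$ the sixteen lines of a generic $4$-cube span a $\RP^5$; each shifted $3$-face $C_i$ spans a hyperplane $W_i\cong\RP^4$. The line $L:=\bigcap_{a}W_a$ has codimension four, hence is a line; since $l_{jkl}\subset W_j\cap W_k\cap W_l$ is a line in that $2$-plane, $L$ meets each $l_{jkl}$, and by uniqueness of the common transversal in $W_i$ one gets $l_{1234}^{(i)}=L$ for every $i$. The case $N=4$ then follows by the same lift-and-project device you invoke for Q-nets in $\RP^3$.
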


Multidimensional consistency assures the existence of associated transformations
that exhibit Bianchi-type permutability properties in analogy to the
corresponding classical integrable systems. In fact, it is a deep result of
discrete differential geometry that on the discrete level discrete integrable
nets and their transformations become indistinguishable. A prominent example for
this scheme are F-transformations of Q-nets. We simply discretize the defining
property captured in Definition~\ref{def:f-trafo_smooth} by replacing the
partial derivatives $\partial_i x, \partial_i x^+$ by difference vectors.

\begin{definition}[F-transformation of Q-nets]
\label{def:qnets_ftransform}
Two $m$-dimensional Q-nets
\begin{equation*}
x,x^+ : \Z^m \to \RP^N
\end{equation*}
are called \emph{F-transforms} (\emph{fundamental transforms}) of one another
if at each $z \in \Z^m$ and for all $1 \le i \le m$ the quadrilaterals $(x, x_i,
x_i^+, x^+)$ are planar.
\end{definition}

Obviously, the condition in Definition~\ref{def:qnets_ftransform} may be re-phrased as follows:
The Q-nets $x$ and $x^+$ are F-transforms of one another if the net
$X : \Z^m \times \left\{ 0,1 \right\} \to \RP^N$
defined by 
\begin{equation*}
X(z,0) = x(z) \text{ and } X(z,1) = x^+(z)
\end{equation*}
is a two-layer $(m+1)$-dimensional Q-net. Therefore, the existence of
F-transforms of Q-nets with permutability properties analogous to the classical
situation is a simple consequence of the multidimensional consistency of Q-nets.
One obtains

\begin{theorem}[Permutability properties of F-transformations of Q-nets]
\label{thm:permutability_qnets}
\hfill
\begin{enumerate}[(i)]
\item 
Let  $x$ be an $m$-dimensional Q-net, and let $x^{(1)}$ and $x^{(2)}$ be two of
its discrete F-transforms. Then there exists a 2-parameter family of Q-nets
$x^{(12)}$ that are discrete F-transforms of both $x^{(1)}$ and $x^{(2)}$. The
corresponding points of the four Q-nets $x$, $x^{(1)}$, $x^{(2)}$ and
$x^{(12)}$ are coplanar. The net $x^{(12)}$ is uniquely determined by one of
its points.
\item
Let  $x$ be an $m$-dimensional Q-net. Let $x^{(1)}$, $x^{(2)}$ and $x^{(3)}$ be
three of its discrete F-transforms, and let three further Q-nets $x^{(12)}$,
$x^{(23)}$ and $x^{(13)}$  be given such that $x^{(ij)}$ is a simultaneous
discrete F-transform of $x^{(i)}$ and $x^{(j)}$. Then generically there exists
a unique Q-net $x^{(123)}$ that is a discrete F-transform of $x^{(12)}$,
$x^{(23)}$ and $x^{(13)}$. The net $x^{(123)}$ is uniquely determined by the
condition that for every permutation $(ijk)$ of $(123)$ the corresponding
points of $x^{(i)}$, $x^{(ij)}$, $x^{(ik)}$ and $x^{(123)}$ are coplanar.
\end{enumerate}
\end{theorem}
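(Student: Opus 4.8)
The plan is to derive both permutability properties from the multidimensional consistency of Q-nets (Theorem~\ref{thm:4d_consistency}), via the dictionary recorded just above: a collection of $m$-dimensional Q-nets linked by a ``chain'' of F-transformations is nothing but a single Q-net of higher dimension restricted to part of $\Z^m\times\{0,1\}^k$, the $k$ extra lattice directions playing the role of transformation directions.

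For part~(i) I introduce two extra directions $m+1,m+2$ and consider the map
\begin{equation*}
X:\Z^m\times\bigl(\{0,1\}^2\setminus\{(1,1)\}\bigr)\longto\RP^N
\end{equation*}
that equals $x$, $x^{(1)}$ and $x^{(2)}$ on the layers $(0,0)$, $(1,0)$ and $(0,1)$ respectively. Every elementary quadrilateral of this truncated domain either lies inside one of the three layers, where planarity is the Q-net property of $x$, $x^{(1)}$ or $x^{(2)}$, or joins two adjacent layers, where planarity is precisely the defining condition of an F-transform (Definition~\ref{def:qnets_ftransform}); since there is no quadrilateral in the direction pair $(m+1,m+2)$ inside the truncated domain, $X$ is automatically a Q-net there. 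I then choose $x^{(12)}(z_0):=X(z_0;1,1)$ to be an arbitrary point of the plane $x(z_0)\vee x^{(1)}(z_0)\vee x^{(2)}(z_0)$ --- a $2$-parameter choice --- and propagate it over $\Z^m$ using the $3$D system that governs Q-nets in the direction triples $(i,m+1,m+2)$. By Theorem~\ref{thm:4d_consistency} this propagation is consistent and extends $X$ to a genuine Q-net on all of $\Z^m\times\{0,1\}^2$. Reading off the top layer $x^{(12)}:=X(\cdot\,;1,1)$ now gives a Q-net; restricting $X$ to the two sub-lattices on which one of the extra coordinates is held equal to $1$ exhibits $x^{(12)}$ as a simultaneous F-transform of $x^{(1)}$ and $x^{(2)}$; the faces of $X$ in the direction pair $(m+1,m+2)$ are the asserted coplanarities of corresponding points of $x,x^{(1)},x^{(2)},x^{(12)}$; and uniqueness of $x^{(12)}$ given one of its points is just the determinacy of the $3$D system.

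For part~(ii) I run the same argument with three extra directions $m+1,m+2,m+3$, letting $X$ be $x$, $x^{(i)}$ and $x^{(ij)}$ on the corresponding layers of $\Z^m\times\bigl(\{0,1\}^3\setminus\{(1,1,1)\}\bigr)$. The check that $X$ is a Q-net on this truncated $3$-cube of layers proceeds as before, with one new family of faces: the quadrilaterals $\bigl(x(z),x^{(i)}(z),x^{(j)}(z),x^{(ij)}(z)\bigr)$ living in the direction pairs $(m+i,m+j)$ at the bottom level. These are planar because each $x^{(ij)}$, being of the kind furnished by part~(i), shares a plane with $x$, $x^{(i)}$ and $x^{(j)}$. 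That same coplanarity confines all seven points of $\{z\}\times\bigl(\{0,1\}^3\setminus\{(1,1,1)\}\bigr)$ to the $3$-space $x(z)\vee x^{(1)}(z)\vee x^{(2)}(z)\vee x^{(3)}(z)$, inside which the three focal planes $x^{(i)}(z)\vee x^{(ij)}(z)\vee x^{(ik)}(z)$ generically meet in the single point $x^{(123)}(z):=X(z;1,1,1)$ that the $3$D Q-net system prescribes. Multidimensional consistency (Theorem~\ref{thm:4d_consistency}) guarantees that performing this at every $z$ assembles a Q-net $X$ on $\Z^m\times\{0,1\}^3$; hence $x^{(123)}$ is a Q-net, it is a simultaneous F-transform of $x^{(12)},x^{(23)},x^{(13)}$ (hold each new coordinate equal to $1$ in turn), and it is characterised by the stated coplanarity conditions, which are the faces of $X$ in the three direction pairs at the corresponding levels.

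The step I expect to demand the most care is the genericity bookkeeping: one has to ensure that the propagation in part~(i) never leaves the open stratum on which the $3$D Q-net system is single-valued, and in part~(ii) that the three focal planes through $x^{(i)},x^{(ij)},x^{(ik)}$ are in general position inside the ambient $3$-space, so that their intersection is a point rather than a line or a plane --- this is exactly what the qualifier ``generically'' in the statement absorbs. Beyond that, everything is a formal consequence of the F-transform/two-layer-Q-net dictionary and Theorem~\ref{thm:4d_consistency}.
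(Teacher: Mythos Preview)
Your approach is exactly the one the paper indicates: it does not give a detailed proof but simply states that the permutability properties are ``a simple consequence of the multidimensional consistency of Q-nets'' via the two-layer dictionary of Definition~\ref{def:qnets_ftransform}, and you have spelled out precisely that consequence. The only point worth tightening is your appeal in part~(ii) to the coplanarity of $(x,x^{(i)},x^{(ij)},x^{(j)})$: this is not part of the hypothesis as literally stated, so you should make explicit that the $x^{(ij)}$ are taken from the family constructed in part~(i) (which is the intended reading and is covered by the ``generically'' qualifier).
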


Theorem~\ref{thm:permutability_qnets} yields the classical results captured by
Theorem~\ref{thm:permutability_smooth} after performing a refinement limit in the ``net
directions'' while keeping the ``transformation directions'' discrete.

\section{Fundamental line systems}
\label{sec:fundamental_line_systems}

While generic discrete torsal line systems in $\RP^N, N\ge 4$, are governed by a
3D system, this is not the case in $\RP^3$. The reason is that for three generic
lines $l_{12}, l_{23}, l_{13}$ in 4-space there is a unique line that intersects
all of them, while in $\RP^3$ there is a whole 1-parameter family of such lines.
However, one obtains a 3D system for line systems in $\RP^3$ by
demanding that the focal quadrilaterals be planar, which turns out to be an
admissible reduction of the considered configurations
(Corollary~\ref{cor:2d_quad_line_systems}).

\begin{definition}[Fundamental line system]
\label{def:fundamental_line_system}
A discrete torsal line system is called \emph{fundamental} if its focal nets
are Q-nets and if it is generic in the sense that lines associated with
opposite vertices of elementary quadrilaterals are skew. In particular, any
generic discrete torsal line system in $\RP^N$, $N \ge 4$, is fundamental
according to Theorem~\ref{thm:focal_nets_are_qnets}.
\end{definition}

\begin{remark*}
The terminology was introduced in~\cite{BobenkoSchief:2014:FundamentalSystems},
where fundamental line systems are discussed in the context of integrable
systems.  It is shown that elementary cubes of such line systems in $\RP^3$ are
characterized by the existence of a unique involution $\tau$ on $\linespace^3$,
such that lines associated with opposite vertices of the cube are interchanged.
The map $\tau$ is determined by any six lines that remain if one removes a pair
of opposite lines. For example, for fixed lines
$l_1,l_2,l_3,l_{12},l_{23},l_{13}$ one may express $l_{123}$ as dependent on $l$
according to $l_{123} = \tau(l)$.  Moreover, it turns out that $\tau$ comes
from a \emph{correlation} of $\RP^3$ (a projective map from $\RP^3$ to its dual,
that is, points and planes are interchanged but lines are mapped to lines).
\end{remark*}

In the following we prove essential properties of fundamental line systems that
will play a key role for our later considerations of supercyclidic nets.  Many
of the presented statements appear
in~\cite[Sect.~2]{DoliwaSantiniManas:2000:TransformationsOfQnets}

\begin{lemma}
\label{lem:fundamental_cubes}
Elementary cubes of a fundamental line system are characterized by any of the
following properties:
\begin{enumerate}[(i)]
\item
\label{item:lem_fun_cubes_planar_quad_one}
One focal quadrilateral is planar.
\item
\label{item:lem_fun_cubes_planar_quad_any}
All focal quadrilaterals are planar.
\item
\label{item:lem_fun_cubes_concurrent_planes_one}
The four focal planes of one direction are concurrent, that is, intersect
in one point.
\item
\label{item:lem_fun_cubes_concurrent_planes_any}
For each direction the four associated focal planes are concurrent.
\end{enumerate}
\end{lemma}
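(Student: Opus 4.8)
The plan is to prove a cycle of implications among the four conditions, together with the observation that any one of them is equivalent to the cube being part of a fundamental line system. I would set up notation first: an elementary cube of a discrete torsal line system has eight lines $l, l_i, l_{ij}, l_{123}$ in $\RP^3$; for each direction $i$ the focal net $f^i$ assigns to each edge of direction $i$ the intersection point $l \cap l_i$, and a focal quadrilateral of direction $i$ is $(f^i, f^i_j, f^i_{jk}, f^i_k)$ for the two remaining directions $j,k$. Dually, the focal planes of direction $i$ are the planes $l \vee l_i$ spanned by pairs of adjacent lines in direction $i$; there are four of them per direction (over the four edges of direction $i$ in the cube).

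The key structural step is the projective duality between ``the four focal points of direction $i$ are coplanar'' and ``the four focal planes of direction $i$ are concurrent''. I would make this precise as follows: consider the two ``opposite'' pairs of lines in direction $i$, say $\{l, l_i\}$ and the three shifted copies. The four focal points $f^i, f^i_j, f^i_k, f^i_{jk}$ all lie in every focal plane that contains them — more usefully, I would show directly that the four focal planes of direction $i$ all pass through a common point if and only if the four focal points span only a plane. This is where a clean dimension count lives: generically (skew opposite lines) the span of all eight lines is all of $\RP^3$, the focal points of direction $i$ generically span $\RP^3$ as well, and imposing coplanarity is one condition; dually the four focal planes generically have empty common intersection, and concurrency is again one condition; the correlation $\tau$ from the Remark (or an elementary pairing argument) matches these two conditions. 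So \iref{item:lem_fun_cubes_planar_quad_one} $\Leftrightarrow$ \iref{item:lem_fun_cubes_concurrent_planes_one}, and likewise \iref{item:lem_fun_cubes_planar_quad_any} $\Leftrightarrow$ \iref{item:lem_fun_cubes_concurrent_planes_any}; the implications \iref{item:lem_fun_cubes_planar_quad_any} $\Rightarrow$ \iref{item:lem_fun_cubes_planar_quad_one} and \iref{item:lem_fun_cubes_concurrent_planes_any} $\Rightarrow$ \iref{item:lem_fun_cubes_concurrent_planes_one} are trivial.

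The remaining and genuinely substantive implication is \iref{item:lem_fun_cubes_planar_quad_one} $\Rightarrow$ \iref{item:lem_fun_cubes_planar_quad_any}: one planar focal quadrilateral forces all of them. Here I would argue via the 3D system: a fundamental line system in $\RP^3$ is governed by the correlation-induced involution $\tau$, and the statement that \emph{some} focal quadrilateral is planar is exactly the condition singling out $l_{123}$ as $\tau(l)$ from among the one-parameter family of lines meeting $l_1, l_2, l_3$ (recall from Section~\ref{sec:fundamental_line_systems} that in $\RP^3$ the three lines $l_{12}, l_{23}, l_{13}$ admit a whole pencil of common transversals). So I would: (a) show that planarity of the focal quadrilateral of direction $i$ is equivalent to $l_{123}$ lying on the specific transversal picked out by $\tau$; (b) observe this characterization of $l_{123}$ is manifestly symmetric in the three directions — e.g. because $\tau$ is an involution exchanging all three opposite pairs simultaneously, or by invoking the symmetry of the construction in \cite{BobenkoSchief:2014:FundamentalSystems}; (c) conclude that planarity in one direction yields planarity in all three. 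As a self-contained alternative to quoting $\tau$, one can do this by explicit projective coordinates: place $l, l_1, l_2, l_3$ in general position, parametrize the transversal pencil by a single parameter $t$, write down the three focal quadrilaterals as functions of $t$, and check that the vanishing of the planarity determinant for one of them forces the vanishing of the other two (they share common factors coming from the already-planar faces $(x, x_i, x_{ij}, x_j)$ of the underlying Q-net structure).

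I expect the main obstacle to be step (a)–(b): cleanly extracting the \emph{symmetry} of the condition ``$l_{123}$ makes one focal quadrilateral planar'' without simply reproving the correlation theorem of \cite{BobenkoSchief:2014:FundamentalSystems}. The danger is a proof that verifies \iref{item:lem_fun_cubes_planar_quad_one} $\Rightarrow$ \iref{item:lem_fun_cubes_planar_quad_any} by brute-force coordinates in a way that obscures why it is true. I would therefore try to phrase it conceptually: a planar focal quadrilateral of direction $i$ means the four focal points $f^i(z)$ over the four $i$-edges are coplanar, equivalently the two focal planes $l \vee l_i$ and $l_j \vee l_{ij}$ (say) together with their $k$-shifts already lie in a common $\RP^3$ in a constrained position; combined with the analogous data in the other directions and the fact that all these planes live in the single ambient $\RP^3$, a rank argument forces the symmetric conclusion. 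Throughout, I would keep the genericity hypothesis (opposite lines skew) explicit, since it is exactly what makes the relevant spans and intersections have their expected dimensions and what makes the four conditions equivalences rather than mere implications.
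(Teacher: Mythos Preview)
Your plan misses the key elementary observation and, as a result, treats as ``genuinely substantive'' a step that is in fact immediate.

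The crucial incidence you overlook is \emph{cross-directional}: the two opposite edges $f^i \vee f^i_j$ and $f^i_k \vee f^i_{jk}$ of the $i$-th focal quadrilateral are precisely the pairwise intersections $(l \vee l_j) \cap (l_i \vee l_{ij})$ and $(l_k \vee l_{jk}) \cap (l_{ik} \vee l_{ijk})$ of the four $j$-th focal planes. Hence these two edges meet (i.e.\ $Q^i$ has a Laplace point in direction $j$, i.e.\ $Q^i$ is planar) if and only if the four $j$-focal planes are concurrent. This single observation gives \iref{item:lem_fun_cubes_planar_quad_one}~$\Rightarrow$~\iref{item:lem_fun_cubes_concurrent_planes_one} for directions $j$ and $k$; applying it again with the roles of $i$ and $j$ swapped gives $Q^j$ planar from the concurrency of the $k$-planes, and so on around the cycle. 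The paper's proof is exactly this, in five lines.

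By contrast, you try to match \iref{item:lem_fun_cubes_planar_quad_one} and \iref{item:lem_fun_cubes_concurrent_planes_one} in the \emph{same} direction $i$ via ``projective duality'' and the correlation $\tau$. That pairing is not elementary---it is essentially the content of the correlation result cited in the Remark after Definition~\ref{def:fundamental_line_system}, which is stated \emph{for} fundamental cubes. Invoking it to \emph{characterize} fundamental cubes is circular unless you reprove it, and reproving it is much harder than the Laplace-point observation above. Your fallback of computing planarity determinants in coordinates would work but obscures the geometry and is unnecessary: once you see the cross-directional link, no computation and no appeal to $\tau$ is needed.
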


\begin{proof}
	If the $i$-th focal quadrilateral $Q^i = (f^i, f^i_j, f^i_k, f^i_{jk})$ is
	planar \iref{item:lem_fun_cubes_planar_quad_one} then the $j$-th focal
	planes all pass through the Laplace point of $Q^i$ in direction $j$ and
	analogous for the $k$-th focal planes
	\iref{item:lem_fun_cubes_concurrent_planes_one}. Hence also the $j$-th
	and $k$-th focal quadrilaterals are planar, i.e., all focal quadrilaterals are
	planar \iref{item:lem_fun_cubes_planar_quad_any}. This in turn implies
	that also the $i$-th focal planes are concurrent and thus each family of
	focal planes is concurrent
	\iref{item:lem_fun_cubes_concurrent_planes_any}.  Finally, this
	implies \iref{item:lem_fun_cubes_planar_quad_one}.
\end{proof}

\begin{proposition}
\label{prop:fundamental_line_systems}
If one focal net of a discrete torsal line system is a Q-net,
the line system is fundamental.
\end{proposition}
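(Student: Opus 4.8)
\section*{Proof proposal}

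The plan is to check the fundamental-net condition cube by cube and to feed the hypothesis directly into Lemma~\ref{lem:fundamental_cubes}. Write $x:=f^p$ for the focal net that is assumed to be a Q-net. The first --- and essentially the only --- step is the observation that for any two directions $j,k$ distinct from $p$ the elementary quadrilateral $(x,x_j,x_{jk},x_k)$ of the Q-net $x$ is planar, and that this quadrilateral is the focal quadrilateral $Q^p=(f^p,f^p_j,f^p_k,f^p_{jk})$ of the elementary cube $\cell{jkp}(z)$ of the line system. Hence every elementary cube of the line system that contains an edge of direction $p$ carries a planar focal quadrilateral, so by Lemma~\ref{lem:fundamental_cubes} all of its focal quadrilaterals are planar. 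Together with the ``easy'' focal quadrilaterals of type $(f^q,f^q_q,f^q_{qa},f^q_a)$, which lie in a focal plane and hence are planar for any discrete torsal line system (Theorem~\ref{thm:focal_nets_are_qnets}), this already shows --- when $m\le 3$, since then every elementary quadrilateral of a focal net occurs among the focal quadrilaterals of a cube containing direction $p$ --- that all focal nets are Q-nets.

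For $m\ge 4$ one still has to deal with the elementary cubes $\cell{ijk}(z)$ with $i,j,k\ne p$, and this is the main obstacle. I would handle it by propagating fundamentality via multidimensional consistency: in an elementary $4$-cube $\cell{ijkp}(z)$ the six three-dimensional faces that involve direction $p$ have just been shown to have all focal quadrilaterals planar, and one must deduce the same for the two remaining faces $\cell{ijk}(z)$ and $\cell{ijk}(z+e_p)$. A cleaner reformulation: since the focal points $f^p(z-e_p)$ and $f^p(z)$ both lie on $l(z)$, we have, for non-degenerate data, $l(z)=f^p(z-e_p)\vee f^p(z)$, so the line system is --- up to a shift of the argument --- the $p$-th edge system of the Q-net $x=f^p$ in the sense of Definition~\ref{def:laplace}; the proposition is therefore equivalent to the statement that the edge system of a Q-net is a fundamental line system, i.e.\ that the Laplace transforms of a Q-net are again Q-nets. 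For $N\ge 4$ this is immediate from Theorem~\ref{thm:focal_nets_are_qnets}, because the edge system of a generic Q-net is a generic discrete torsal line system; for $N=3$ one invokes the multidimensional consistency of the relevant $3$D systems (in the spirit of Theorem~\ref{thm:4d_consistency}) to propagate the two- and three-dimensional statement to all of $\Z^m$.

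Finally, one must account for the genericity built into the definition of a fundamental line system --- namely that the lines associated with opposite vertices of an elementary quadrilateral are skew (Definition~\ref{def:fundamental_line_system}). This is the point at which I would be most careful: a torsal quadrilateral of lines is either of this type (its four lines spanning a $3$-space) or ``flat'' (all four lines coplanar), and one has to argue that the flat case is incompatible with $f^p$ being a genuine Q-net, or else read this skewness as part of the standing non-degeneracy assumptions. Apart from this genericity bookkeeping and the $m\ge 4$ propagation, the argument is formal: the implication ``one planar focal quadrilateral forces all focal quadrilaterals of the cube to be planar'' is exactly Lemma~\ref{lem:fundamental_cubes}.
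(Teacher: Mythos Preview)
Your core argument --- apply Lemma~\ref{lem:fundamental_cubes} cube by cube, using that the hypothesis supplies one planar focal quadrilateral per cube --- is exactly the paper's proof, which dispatches it in two sentences: the quadrilaterals $(f^i,f^i_i,f^i_{ij},f^i_j)$ lie in the focal plane $l_i\vee l_{ij}$ and hence are always planar, while the planarity of $(f^i,f^i_j,f^i_{jk},f^i_k)$ follows from Lemma~\ref{lem:fundamental_cubes} applied to the cube $\cell{ijk}$.

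You go further than the paper in flagging the $m\ge 4$ case: if the distinguished direction $p$ is not among $\{i,j,k\}$, the hypothesis does not directly hand you a planar focal quadrilateral of $\cell{ijk}$, so Lemma~\ref{lem:fundamental_cubes} does not immediately apply. The paper's proof is silent on this and, read literally, covers only $m\le 3$. Your edge-system reformulation is the right move --- $f^p$ is a Q-net inscribed in $l$, and one wants to conclude that a line cube with an inscribed Q-cube is fundamental --- but your closure for $N=3$ via ``multidimensional consistency in the spirit of Theorem~\ref{thm:4d_consistency}'' is too vague to count as a proof. What actually does the job is the implication \iref{item:thm_fundamental_line_complex_one_inscribed_qnet}~$\Rightarrow$~\iref{item:thm_fundamental_line_complex_fundamental} of Theorem~\ref{thm:fundamental_line_complex}, whose proof works cube by cube via a lift to $\RP^4$; that is a forward reference, so either accept Proposition~\ref{prop:fundamental_line_systems} as a statement for $m\le 3$ (which suffices for the paper's logical flow up to that theorem) or import the lifting argument here. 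Your caveat about the skewness genericity is correct bookkeeping; the paper tacitly folds it into its standing non-degeneracy assumptions.
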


\begin{proof}
  If the line system is at least 3-dimensional, there are two types of focal
  quadrilaterals of the $i$-th focal net: $Q^{ij} = (f^i,f^i_i,f^i_{ij},f^i_j)$
  and $Q^{jk} = (f^i,f^i_j,f^i_{jk},f^i_k)$, $i \ne j \ne k \ne i$. The vertices
  of $Q^{ij}$ are contained in the plane $l_i \vee l_{ij}$ and hence $Q^{ij}$ is
  planar. The planarity of $Q^{jk}$ follows from Lemma~\ref{lem:fundamental_cubes}
  under the assumption of the proposition.
\end{proof}

Laplace transforms of generic Q-nets are Q-nets themselves. This follows from
Proposition~\ref{prop:fundamental_line_systems} since the $i$-th focal net of the $i$-th
edge system is the original Q-net. Hence the following

\begin{corollary}
  Edge systems of generic Q-nets are fundamental.
\end{corollary}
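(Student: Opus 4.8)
The plan is to deduce the statement directly from Proposition~\ref{prop:fundamental_line_systems}. Fix a generic Q-net $x:\Z^m\to\RP^N$ and a direction $i$. By Definition~\ref{def:laplace} the $i$-th edge system $e^i$ is already a discrete torsal line system, so all I need to produce is one focal net of $e^i$ that is a Q-net, after which Proposition~\ref{prop:fundamental_line_systems} does the rest.

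The key observation is that the $i$-th focal net of $e^i$ is, up to a shift, the net $x$ itself. Indeed, the lines $e^i(z) = x \vee x_i$ and $e^i(z+e_i) = x_i \vee x_{ii}$ both pass through $x_i$, and for a net in general position the three consecutive points $x, x_i, x_{ii}$ are not collinear, so the two lines meet in $x_i$ and nowhere else; hence $f^i = x_i$, a shift of $x$. Since a shift of a Q-net is a Q-net, one focal net of $e^i$ is a Q-net, and Proposition~\ref{prop:fundamental_line_systems} then yields that $e^i$ is fundamental. This is just the sentence preceding the corollary, rephrased as a statement about the line system rather than about its individual Laplace transforms.

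What remains is to check that the word ``generic'' in the hypothesis supplies the non-degeneracy demanded by Definition~\ref{def:fundamental_line_system}, namely that lines attached to opposite vertices of an elementary quadrilateral are skew. Distinguishing the quadrilaterals that contain direction $i$ from those that do not, in both cases the two lines in question are extended $i$-edges of $x$ based at opposite vertices of a combinatorial square, hence edges that are neither coincident nor coplanar for a net in general position, and therefore skew; I would fold this open, dense and 3D-propagated condition into the meaning of ``generic''. I do not expect a genuine obstacle here: the argument is a one-line application of Proposition~\ref{prop:fundamental_line_systems}, and the only care needed is bookkeeping --- selecting the $i$-th focal net of $e^i$, tracking the index shift $f^i = x_i$, and making explicit which non-degeneracy assumptions on $x$ feed into the skewness requirement.
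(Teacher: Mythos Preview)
Your proof is correct and follows essentially the same route as the paper: identify the $i$-th focal net of $e^i$ with (a shift of) the original Q-net $x$ and then invoke Proposition~\ref{prop:fundamental_line_systems}. Your treatment is in fact slightly more careful than the paper's, which simply states that ``the $i$-th focal net of the $i$-th edge system is the original Q-net'' without tracking the shift $f^i = x_i$ or spelling out the genericity/skewness check.
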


\begin{definition}
\label{def:extension_inscribed}
Let $x:\Z^m \to \RP^N$ be a quadrilateral net and $l:\Z^m \to \linespace^N$ be a
discrete line system. If $x(z) \in l(z)$ for all $z \in \Z^m$ we say that $l$ is
an \emph{extension} of $x$ and, conversely, that $x$ is \emph{inscribed} in $l$.
\end{definition}

Throughout this article, the extension of Q-nets to discrete torsal line systems
as well as the construction of Q-nets inscribed in discrete torsal lines systems
plays a prominent role (line systems and Q-nets related as in
Definition~\ref{def:extension_inscribed} are called \emph{conjugate}
in~\cite{DoliwaSantiniManas:2000:TransformationsOfQnets}). In fact, it turns out
that a discrete torsal line system is fundamental if and only if it is the
extension of a Q-net.  For a concise treatment of this relation and for later
reference, we formulate the following evident

\pagebreak

\begin{lemma}[Construction schemes]
\label{lem:2d_systems}
\hfill
\begin{enumerate}[\qquad]
\item[(Q)]
  Construction of a planar quadrilateral inscribed in a torsal line congruence
  quadrilateral $(l,l_1,l_{12},l_2)$: Three given points $x \in l$, $x_1 \in l_1$, and
  $x_2 \in l_2$ determine the fourth point $x_{12} = l_{12} \cap (x \vee x_1 \vee x_2)$
  uniquely.
\item[(L)]
  Extension of a planar quadrilateral $(x,x_1,x_{12},x_2)$ to a torsal line congruence
  quadrilateral: Three given lines $l \ni x$, $l_1 \ni x_1$, and $l_2 \ni x_2$ determine
  the fourth line $l_{12} = (x_{12} \vee l_1) \cap (x_{12} \vee l_2)$ uniquely.
\end{enumerate}
Each of the above constructions (Q) and (L) describes a 2D system in the sense
that 1-dimensional Cauchy data along two intersecting coordinate axes of $\Z^2$
propagates uniquely onto the entire lattice.
\end{lemma}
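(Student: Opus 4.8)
The plan is to handle the two local constructions (Q) and (L) by a one-line dimension count each, carried out inside a suitable ambient projective $3$-space, and then to observe that any rule which recovers the fourth datum of an elementary quadrilateral from the other three automatically propagates one-dimensional Cauchy data over $\Z^2$. So the genuine content is only to pin down the genericity hypotheses under which ``line $\cap$ plane'' and ``plane $\cap$ plane'' produce, respectively, a single point and a line of the expected dimension.

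For (Q): First I would invoke the genericity clause of Definition~\ref{def:line_congruence}(i): the four lines $l,l_1,l_{12},l_2$ of a torsal line congruence quadrilateral are skew in opposite pairs and hence span a common $3$-space $V$. The prescribed points $x\in l$, $x_1\in l_1$, $x_2\in l_2$ therefore lie in $V$, and (generically, i.e.\ when they are not collinear) span a plane $\pi\subset V$. Inside the $3$-space $V$ a line and a plane not containing it meet in exactly one point, so $x_{12}:=l_{12}\cap\pi$ is well defined and unique. Since $x_{12}\in\pi=x\vee x_1\vee x_2$ the quadrilateral $(x,x_1,x_{12},x_2)$ is planar, and it is inscribed in $(l,l_1,l_{12},l_2)$ because each vertex sits on the matching line.

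For (L): The key preliminary step is to check that the configuration again collapses into a $3$-space. If $(l,l_1,l_{12},l_2)$ is to be a generic torsal line congruence quadrilateral then $l_1$ and $l_2$ are skew and $l$ meets both; because $l_1\cap l_2=\varnothing$ the two points $l\cap l_1$ and $l\cap l_2$ are distinct, whence $l\subset l_1\vee l_2=:V$, a $3$-space. Thus $\pi=x\vee x_1\vee x_2\subset V$ and $x_{12}\in V$, and $x_{12}\vee l_1$, $x_{12}\vee l_2$ are two planes of $V$ through the common point $x_{12}$; generically they are distinct and so meet in a line $l_{12}\ni x_{12}$. This $l_{12}$ lies in the plane $x_{12}\vee l_1$ together with $l_1$, hence meets $l_1$, and likewise meets $l_2$; therefore $(l,l_1,l_{12},l_2)$ is a torsal line congruence quadrilateral extending the given planar one, and the construction makes the outcome manifestly unique.

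For the ``2D system'' assertion there is nothing to check beyond an induction: prescribing $f$ along $\coordsurf1$ and $\coordsurf2$ determines $f$ on the first quadrant of $\Z^2$ by induction on $n_1+n_2$, since at any vertex $z$ with $n_1,n_2\ge1$ the three vertices $z-e_1-e_2,\ z-e_1,\ z-e_2$ of the quadrilateral $\cell{12}(z-e_1-e_2)$ already carry data and rule (Q) (resp.\ (L)) supplies $f(z)$ uniquely; the remaining quadrants are symmetric. No compatibility condition intervenes because each quadrilateral is filled in exactly one way -- consistency questions arise only for three-dimensional cubes, not here. I expect the only genuinely fiddly point to be the precise formulation of genericity in (L): one must spell out that the input lines are mutually compatible ($l$ meeting the skew pair $l_1,l_2$) so that an extension exists at all, after which everything reduces to the incidence fact that two distinct planes of a $3$-space sharing a point meet in a line.
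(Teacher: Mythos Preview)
Your argument is correct. The paper itself treats this lemma as self-evident (it is introduced with ``we formulate the following evident'' and carries no proof), so there is nothing to compare against beyond noting that you have supplied the dimension counts and genericity checks that the authors left implicit. Your observation that in $\RP^N$ with $N\ge4$ one genuinely needs the compatibility hypothesis $l\cap l_1\ne\emptyset$, $l\cap l_2\ne\emptyset$ (together with $l_1,l_2$ skew) to force everything into a common $3$-space---so that $(x_{12}\vee l_1)\cap(x_{12}\vee l_2)$ is a line rather than a point---is exactly the sort of detail the authors suppress; it is consistent with how the lemma is later invoked (e.g.\ in the proof of Theorem~\ref{thm:fundamental_line_complex}), where the input lines always come from a torsal configuration.
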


Let $l$ be a generic discrete torsal line system in~$\RP^N$ for $N \ge 4$, which
is automatically fundamental due to Theorem~\ref{thm:focal_nets_are_qnets}.  Obviously
the central projection of $l$ from a point to a 3-dimensional subspace preserves
the planarity of focal quadrilaterals and hence yields a fundamental line system
in $\RP^3$.  It turns out that the converse is also true.

\begin{theorem}[Characterization of fundamental line systems in $\RP^3$]
\label{thm:fundamental_line_complex}
For a discrete torsal line system $l : \Z^m \to \linespace^3$,
the following properties are equivalent:
\begin{enumerate}[(i)]
\item
\label{item:thm_fundamental_line_complex_fundamental}
$l$ is fundamental.
\item
\label{item:thm_fundamental_line_complex_projection}
$l$ is the projection of a generic discrete torsal line system in $\RP^4$.
\item
\label{item:thm_fundamental_line_complex_one_inscribed_qnet}
$l$ is the extension of a generic Q-net.
\item
\label{item:thm_fundamental_line_complex_consistency}
The construction of Q-nets inscribed into $l$ is consistent.
\end{enumerate}
\end{theorem}

\begin{proof}
  \iref{item:thm_fundamental_line_complex_fundamental}~$\implies$~\iref{item:thm_fundamental_line_complex_projection}. 
	We start with the observation that for each elementary 3-cube of a fundamental
	line system $l$ in $\RP^3$, any seven lines determine the eighth line uniquely
	via the planarity condition for focal quadrilaterals. For the following, we
	embedd $l$ into $\RP^4$ by identification of $\RP^3$ with any hyperplane $\pi
	\subset \RP^4$ and in a first step show that each individual cube may be
	derived via projection.  More precisely, for its eigth lines
	$l,\ldots,l_{123}$ in $\pi$ we show how to construct lines $\hat l,\ldots,\hat
	l_{123}$ in $\RP^4$ that constitute a fundamental line complex cube, such that
	$l = \tau(\hat l),\ldots,l_{123}=\tau(\hat l_{123})$ with $\tau$ being the
	projection of $\RP^4$ from a point $p \notin \pi$ to the hyperplane $\pi$.  To
	begin with, we may choose $p$ to be any point not in $\pi$. Further, we define
	$\hat{l}=l$, $\hat{l}_1=l_1$, $\hat{l}_{2}=l_2$, and $\hat{l}_{12}=l_{12}$
	in~$\pi$. The lift~$\hat{l}_3$ may then be chosen to be any line in the plane
	$l_3 \vee p$ that passes through $l_3 \cap l$ and lies outside~$\pi$. This
	determines the lifts~$\hat{l}_{13}$ and~$\hat{l}_{23}$ as each of them is
	spanned by one focal point in $\pi$ and one lifted focal point on $\hat l_3$,
	e.g., $\hat l_{13} = (l_{13} \cap l_1) \vee (\hat{l}_3 \cap ((l_{13} \cap l_3)
	\vee p))$.  The seven lifted lines constitute generic Cauchy data for a
	fundamental line complex cube in~$\RP^4$ and hence there exists a unique
	eighth line $\hat l_{123}$ that may be constructed as
  \[
  \hat{l}_{123} = (\hat{l}_{12} \vee \hat{l}_{13}) \cap (\hat{l}_{12}
  \vee \hat{l}_{23}) \cap (\hat{l}_{13} \vee \hat{l}_{23}).
  \]
	The central projection $\tau : \RP^4 \setminus \left\{ p \right\} \to \pi$
	then yields a fundamental line complex cube in~$\pi$.  As seven lines coincide
	with the original cube by construction, the eighth line has to coincide by
	uniqueness.  It remains to observe that the construction works also globally,
	since the lift is generic and thus the suggested construction may be extended
	to Cauchy data for the whole fundamental line system $l$.

	\iref{item:thm_fundamental_line_complex_projection}~$\implies$~\iref{item:thm_fundamental_line_complex_consistency}.
	We have to show the 3D consistency of the 2D system that is described by
	Lemma~\ref{lem:2d_systems}~(Q) under the given assumptions.
	So consider an elementary cube of $l$ and let $x, x_1, x_2, x_3$ be four
	generic points on the lines $l, l_1, l_2, l_3$, respectively. As before,
	consider a lift~$\hat{l}$ of~$l$ to~$\RP^4$ and accordingly lifted points
	$\hat{x}, \hat{x}_1, \hat{x}_2, \hat{x}_3$, the latter spanning a 3-dimensional
	subspace~$\hat{x} \vee \hat{x}_1 \vee \hat{x}_2 \vee \hat{x}_3$. This
	subspace defines a unique Q-cube inscribed into $\hat l$ via intersection
	with the lines of~$\hat{l}$. The projection of the Q-cube in $\RP^4$
	yields a Q-cube in~$\RP^3$, that is, the 2D system is 3D, and hence $m$D
	consistent.
    
  \iref{item:thm_fundamental_line_complex_consistency} $\implies$
  \iref{item:thm_fundamental_line_complex_one_inscribed_qnet}. 
  If the construction of inscribed Q-nets is consistent, then we can
  construct an inscribed Q-net.
  
  \iref{item:thm_fundamental_line_complex_one_inscribed_qnet}
  $\implies$ \iref{item:thm_fundamental_line_complex_fundamental}. 
	It is sufficient to show that each elementary 3-cube of $l$ may be obtained as
	the	projection of a fundamental line complex cube in $\RP^4$, since this implies
	that all focal nets are Q-nets. So assume you
	are given a torsal line complex cube $(l,\ldots,l_{123})$ with an inscribed
	Q-cube~$X=(x,\ldots,x_{123})$ in $\RP^3$. The first thing to note is that the
	vertices $X$ together with the lines $l,l_1,l_2$, and $l_3$ determines the
	remaining lines uniquely according to Lemma~\ref{lem:2d_systems}~(L). Now identify
	$\RP^3$ with any hyperplane $\pi$ in $\RP^4$ and choose a point $p$ outside
	$\pi$, which defines the central projection $\tau : \RP^4 \setminus \left\{ p
	\right\} \to \pi$. The next step is to fix the vertices $X$ in $\pi$ and
	construct generic lifts $\hat{l}$, $\hat{l}_1$, $\hat{l}_2$, $\hat{l}_3
	\not\subset \pi$ of the lines~$l$, $l_1$, $l_2$, and $l_3$ through the
	corresponding vertices that satisfy
	\begin{equation}
	\label{eq:cauchy_projection}
	\hat l \cap \hat l_i \ne \emptyset,
	\quad
	\tau(\hat l)=l,
	\quad
	\tau(\hat l_i)=l_i,
	\quad
	i = 1,2,3.
	\end{equation}
	We will now show that Lemma~\ref{lem:2d_systems}~(L) consistently determines lines
	$\hat l_{12}, \hat l_{23}, \hat l_{13}$, and $\hat l_{123}$ through the
	corresponding vertices of $X$ for the given Cauchy data $\hat l$, $\hat l_1$,
	$\hat l_2$, $\hat l_3$. This implies the assertion, since the solution $(\hat
	l,\ldots,\hat l_{123})$ is mapped under $\tau$ to the solution
	$(l,\ldots,l_{123})$ of the original Cauchy problem because of
	\eqref{eq:cauchy_projection} and $\tau(X) = X$.

	To see the consistency in $\RP^4$, first construct the lines $\hat{l}_{ij}$
	according to Lemma~\ref{lem:2d_systems}~(L). As we considered a generic lift, the
	intersection
	\[
  \hat{l}_{123} = (\hat{l}_{12} \vee \hat{l}_{13}) \cap (\hat{l}_{12}
  \vee \hat{l}_{23}) \cap (\hat{l}_{13} \vee \hat{l}_{23})
  \]
	of three generic hyperplanes in $\RP^4$ is 1-dimensional and completes a fundamental line complex
	cube in $\RP^4$. By construction, we also find $x_{123} \in \hat l_{123}$.
\end{proof}

The above proof of Theorem~\ref{thm:fundamental_line_complex} reveals the following

\begin{corollary}[$m$D consistent systems related to fundamental line systems]
\label{cor:2d_quad_line_systems}\hfill
\begin{enumerate}[(i)]
\item
Fundamental line systems are governed by an $m$D consistent 3D system.
\item
The construction of Q-nets inscribed in a discrete torsal line system according
to Lemma~\ref{lem:2d_systems}~(Q) is $m$D consistent if and only if the line system
is fundamental.
\item
The extension of Q-nets to discrete torsal line systems according to
Lemma~\ref{lem:2d_systems}~(L) is $m$D  consistent and always yields fundamental line
systems.
\end{enumerate}
\end{corollary}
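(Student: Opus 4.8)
The plan is to read all three items off the proof of Theorem~\ref{thm:fundamental_line_complex} together with Theorem~\ref{thm:4d_consistency}; no new geometry is needed, and the only point requiring care is the passage from ``consistency around a single cube'' to genuine $m$D consistency.

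For~(i) I would first identify the $3$D system: by Lemma~\ref{lem:fundamental_cubes} an elementary $3$-cube of a fundamental line system is precisely one all of whose focal quadrilaterals are planar, and -- as observed at the start of the proof of \iref{item:thm_fundamental_line_complex_fundamental}~$\implies$~\iref{item:thm_fundamental_line_complex_projection} -- this planarity condition determines the eighth line $l_{123}$ uniquely from the remaining seven (it is the common intersection of the focal planes $l_{12}\vee l_{13}$, $l_{12}\vee l_{23}$, $l_{13}\vee l_{23}$). For $m$D consistency I would take Cauchy data $l|_{\coordsurf{ij}}$, $1\le i<j\le m$, lift it generically off a hyperplane $\pi\subset\RP^4$ exactly as in the cited proof, propagate it there using the $m$D consistency of torsal line systems in $\RP^4$ (Theorem~\ref{thm:4d_consistency}), and project it back by central projection onto $\pi$; since projection preserves planarity of focal quadrilaterals the result is a fundamental line system restricting to the given Cauchy data, and it is the unique such one because the $3$D system already fixes every remaining line. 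For $N\ge4$ the claim is already subsumed by Theorem~\ref{thm:4d_consistency}.

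For~(ii) both implications are contained in Theorem~\ref{thm:fundamental_line_complex}. If $l$ is fundamental it is the projection of a generic torsal line system $\hat l$ in $\RP^4$ by \iref{item:thm_fundamental_line_complex_fundamental}~$\iff$~\iref{item:thm_fundamental_line_complex_projection}; a generic lift of four points $x,x_1,x_2,x_3$ spans a $3$-space meeting $\hat l$ in an inscribed Q-cube whose projection realizes scheme~(Q), which gives $3$D consistency of~(Q) and hence, as in the proof of the theorem, $m$D consistency. Conversely, consistency of scheme~(Q) yields an inscribed Q-net, so $l$ is fundamental by \iref{item:thm_fundamental_line_complex_consistency}~$\implies$~\iref{item:thm_fundamental_line_complex_one_inscribed_qnet}~$\implies$~\iref{item:thm_fundamental_line_complex_fundamental}. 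For~(iii) I would likewise reuse the lift argument in the proof of \iref{item:thm_fundamental_line_complex_one_inscribed_qnet}~$\implies$~\iref{item:thm_fundamental_line_complex_fundamental}: Cauchy data for scheme~(L) consist of a Q-net $x$ on $\Z^m$ together with extension lines $l,l_i$ along the coordinate axes through the corresponding vertices; fixing $x$ in a hyperplane $\pi\subset\RP^4$ and choosing generic lifts $\hat l,\hat l_i\not\subset\pi$ satisfying~\eqref{eq:cauchy_projection}, scheme~(L) applied in $\RP^4$ determines all remaining lines consistently, because genericity of the lift forces the triple intersection of the hyperplanes $\hat l_{12}\vee\hat l_{13}$, $\hat l_{12}\vee\hat l_{23}$, $\hat l_{13}\vee\hat l_{23}$ to be one-dimensional and to complete a fundamental line complex cube; projecting by $\tau$ solves the original Cauchy problem in $\RP^3$ and exhibits the output as a projection of a fundamental line complex, hence as a fundamental line system. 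Again $3$D consistency is enough, as in the theorem.

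The one point I expect to need spelling out -- where a bare ``this is a corollary'' would be too glib -- is that the genericity conditions used cube by cube (a generic lift into $\RP^4$, transversality of the triple hyperplane intersection) can be arranged \emph{simultaneously and globally} along the whole of the Cauchy data, so that the local constructions glue into one $m$D-consistent scheme. This is the ``the construction works also globally, since the lift is generic'' step in the theorem's proof; I would make it precise by noting that at each stage the admissible lifts form an open dense subset, so the required conditions remain jointly satisfiable along any exhaustion of $\Z^m$.
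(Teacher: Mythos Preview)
Your proposal is correct and follows the same route as the paper, which in fact offers no separate proof for this corollary at all---it simply states that ``the above proof of Theorem~\ref{thm:fundamental_line_complex} reveals the following'' and leaves the reader to extract the three items. You have done exactly that extraction, tracing each item back to the relevant implication in the theorem's proof and invoking Theorem~\ref{thm:4d_consistency} where appropriate. Your added discussion of the passage from cube-by-cube to global $m$D consistency (the open-dense genericity argument for the lift) makes explicit what the paper dismisses in one clause (``the construction works also globally, since the lift is generic''), but this is an elaboration of the paper's own reasoning rather than a different approach.
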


\section{Supercyclides}
\label{sec:supercyclides}

Let $\mathcal{M}$ be a surface in $\RP^3$ that is generated by a 1-parameter
family of conics. If along each conic the tangent planes to $\mathcal{M}$
envelop a quadratic cone, $\mathcal{M}$ is called a \emph{surface of Blutel},
cf. Fig.~\ref{fig:supercyclides}, left.  If the conjugate curves to the conics of
a surface of Blutel are also conics, the Blutel property is automatically
satisfied for the conjugate family \cite{Degen:1982:ConjugateConics}.
Accordingly, such surfaces had been introduced by Degen as \emph{double Blutel
surfaces} originally, but eventually became known to a wider audience under the
name of \emph{supercyclides}.\footnote{Most double Blutel surfaces are (complex)
projective images of Dupin cyclides \cite{Degen:1986:ZweifachBlutel}, so Degen
later referred to double Blutel surfaces as ``generalized cyclides''
\cite{Degen:1994:GeneralizedCyclidesCAGD} -- although this term was already used
by Casey and Darboux for quartic surfaces that have the imaginary circle at
infinity as a singular curve and hence generalize Dupin cyclides in a different
way.  Therefore, Pratt proposed the name ``supercyclides'' for a major subclass
of double Blutel surfaces, which is characterized by a certain quartic equation
and contains the projective images of quartic Dupin cyclides
\cite{Pratt:1996:DupinCyclidesAndSupercyclides,Pratt:1997:QuarticSupercyclidesBasic}.
Eventually, the term supercyclides was used for the whole class of
double Blutel surfaces by Pratt and Degen
\cite{Pratt:1997:SupercyclidesClassification,Degen:1998:SupercyclidesOrigin,Pratt:2002:QuarticSupercyclidesDesign}.}

\begin{figure}[htb]
\begin{center}
\includegraphics[width=.21\textwidth]{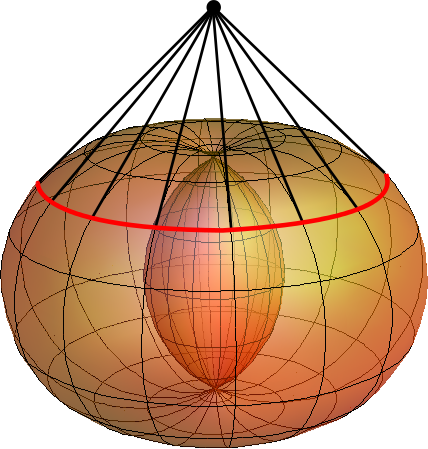}
\hspace{.15\textwidth}
\includegraphics[width=.24\textwidth]{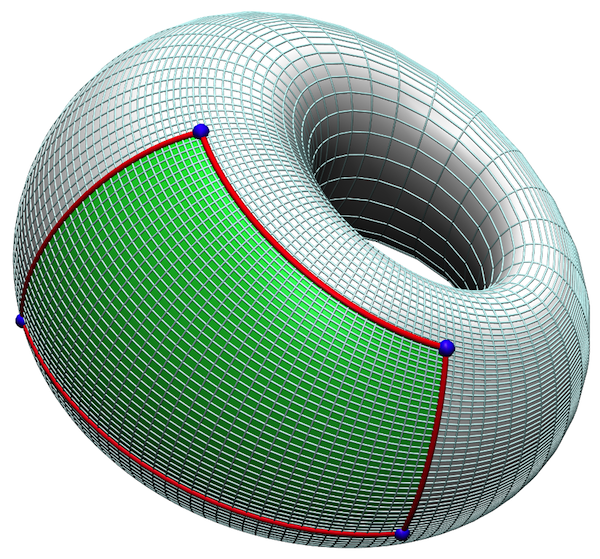}
\end{center}
\caption{Left: The tangent planes along a characteristic conic on a supercyclide
  envelop a quadratic cone. The generators of the cone are the tangents of the
  conjugate curves.
  Right: Restriction of a supercyclide to an SC-patch, that is, a surface patch
  bounded by parameter lines of a characteristic parametrization.}
\label{fig:supercyclides}
\end{figure}

\begin{definition}[Supercyclides and SC-patches]
\label{def:supercyclide}
A surface in $\RP^3$ that is generated by two conjugate families of conics, such
that the tangent planes along each conic envelop a quadratic cone, is called a
\emph{supercyclide}.  We refer to those conics as \emph{characteristic conics}
and accordingly call a conjugate parametrization $f : U \to \RP^3$ of a
supercyclide characteristic if its parameter lines are characteristic conics.

A (parametrized) \emph{supercyclidic patch} (\emph{SC-patch} for short) is the
restriction of a characteristic parametrization to a closed rectangle $I_1
\times I_2 \subset U$, cf. Fig.~\ref{fig:supercyclides}, right.
\end{definition}

In~\cite{Degen:1982:ConjugateConics} it is shown that
for each of the two families of characteristic conics on a supercyclide the
supporting planes form a pencil. The axis of such a pencil consists of the
vertices of the cones that are tangent to the respective conjugate family of
characteristic conics.

\begin{definition}[Characteristic lines]
\label{def:characteristic_lines}
The axes of the pencils supporting the families of characteristic conics of a
supercyclide are the \emph{characteristic lines}.
\end{definition}

\paragraph{Examples of supercyclides}
Any non-degenerate quadric $Q$ in $\RP^3$ is a supercyclide in a manifold way:
Simply take two lines $a^1$ and $a^2$ that are polar with respect to $Q$ and
consider the plane pencils through those lines. The intersections of those
planes with $Q$ consitute a conjugate network of conics that satisfy the tangent
cone property. Another prominent subclass of supercyclides is given by Dupin
cyclides, the latter being characterized by the fact that all curvature lines
are circles. They are supercyclides, since a net of curvature lines is a special
conjugate net and, moreover, the tangent cone property is satisfied. As special
instances of Dupin cyclides it is worth to mention the three types of
rotationally symmetric tori (ring, horn, and spindle -- having 0, 1, and 2
singular points, respectively) and to recall that all other Dupin cyclides may
be obtained from such tori by inversion in a sphere. Finally, since the
defining properties of supercyclides are projectively invariant, it is clear
that any projective image of a Dupin cyclide is a supercyclide.

\smallskip

A complete classification of supercyclides is tedious, see the early
contributions
\mbox{\cite{Degen:1986:ZweifachBlutel,Barner:1987:DifferentialgeometrischeKennzeichnungSuperzykliden}}
and also the later
\cite{Pratt:1997:QuarticSupercyclidesBasic,Pratt:1997:SupercyclidesClassification}.
One fundamental result is that supercyclides are algebraic surfaces of, at most,
degree four.  While
\cite{Degen:1986:ZweifachBlutel,Barner:1987:DifferentialgeometrischeKennzeichnungSuperzykliden}
follow a classical differential geometric approach that relies essentially on
convenient parametrizations derived from the double Blutel property, the
treatment of supercyclides in
\cite{Pratt:1997:QuarticSupercyclidesBasic,Pratt:1997:SupercyclidesClassification}
is more of an algebraic nature. On the other hand, there is also a unified
approach to the construction and classification of supercyclides that starts
with a certain ruled 3-manifold in projective 5-space from which all
supercyclides may be obtained by projection and intersection with a hyperplane
\cite{Degen:1998:SupercyclidesOrigin}. The basis for this unified treatment can
be found already in \cite{Degen:1986:ZweifachBlutel}.

\paragraph{Genericity assumption}
In this paper we only consider generic supercyclides, that is, supercyclides of
degree four with skew characteristic lines.

\enlargethispage{\baselineskip}

\begin{proposition}[Properties of SC-patches]
\label{prop:properties_supercyclidic_patches}
\hfill
\begin{enumerate}[(i)]
\item
\label{item:prop_patches_coplanar}
The vertices of an SC-patch are coplanar.
\item
\label{item:prop_patches_projection}
Isoparametric points on opposite boundary curves are perspective from the
non-cor\-re\-spond\-ing Laplace point of the vertex quadrilateral, cf.
Fig.~\ref{fig:scpatches_projection}, left.
\item
\label{item:prop_patches_concurrent_tangent_congruence_quad}
For each coordinate direction, the tangents to the corresponding boundary curves
at the four vertices of an SC-patch intersect cyclically.  In particular, the
eight tangents constitute an elementary hexahedron of a fundamental line
complex, cf.  Fig.~\ref{fig:scpatches_projection}, middle and right.
\item
\label{item:prop_patches_concurrent_tangent_planes}
The tangent planes at the four vertices intersect in one point.
\end{enumerate}
\end{proposition}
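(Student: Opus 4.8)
The plan is to exploit the two defining structures of a generic supercyclide: the two pencils of planes carrying the characteristic conics (with axes the skew characteristic lines $a^1,a^2$) and the quadratic tangency cones along each conic. I would treat the four assertions in the order (ii), (i), (iii), (iv), since (ii) is the projective mechanism that makes everything else fall into place, and (i) will come out as a corollary of it. Throughout, an SC-patch is $f|_{I_1\times I_2}$ with the four vertices $f(u_0,v_0), f(u_1,v_0), f(u_1,v_1), f(u_0,v_1)$, and the four boundary curves are the characteristic conics $f(\cdot,v_0)$, $f(\cdot,v_1)$ (in one family) and $f(u_0,\cdot)$, $f(u_1,\cdot)$ (in the other).

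For (ii): fix the first family, so the two boundary conics $C_0 = f(\cdot,v_0)$ and $C_1 = f(\cdot,v_1)$ each lie in a plane of the pencil with axis $a^2$; denote these planes $\pi_0,\pi_1$, meeting in $a^2$. The conjugate conic $f(u_0,\cdot)$ through the two endpoints $f(u_0,v_0)\in C_0$ and $f(u_0,v_1)\in C_1$ lies in a plane $\sigma$ of the other pencil (axis $a^1$). The two isoparametric endpoints $f(u_0,v_0)$ and $f(u_0,v_1)$ therefore both lie on the line $\sigma\cap(\text{the plane of }C_0\text{ or }C_1)$... more precisely, I would argue that the projection from the Laplace point of the vertex quadrilateral carries $C_0$ to $C_1$: the Laplace point in question is the intersection of the extended diagonals/edges of the vertex quadrilateral of the non-corresponding direction, which by Definition~\ref{def:laplace} is a focal point of the edge system. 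The conjugate conics $f(u,\cdot)$ sweep a one-parameter family of planes through $a^1$; restricting attention to the two values $u_0,u_1$ we get that $f(u_0,\cdot)$ and $f(u_1,\cdot)$ meet in a point on $a^1$, and this common point is the center of the perspectivity between the two ``rails'' $C_0,C_1$ regarded as the parameter lines $v=v_0,v=v_1$. The cleanest route is to use the characteristic parametrization itself: because the conjugate curves are conics lying in planes through $a^1$, a plane through $a^1$ cuts the surface in a conjugate conic, so the map sending $f(u,v_0)\mapsto f(u,v_1)$ along conjugate conics is the restriction of the projection from $a^1\cap(\pi_0\vee\pi_1)$-type data; chasing which point this is shows it is exactly the Laplace point of the vertex quadrilateral in the conjugate direction. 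I expect the bookkeeping of \emph{which} Laplace point (there are several, cf. the footnote to Definition~\ref{def:laplace}) to be the one genuinely fiddly point; Degen's result (cited just before Definition~\ref{def:characteristic_lines}) that each family's supporting planes form a pencil is the key input that makes the perspectivity center well-defined.

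Statement (i) then follows immediately: by (ii) the four vertices split into two pairs, each pair perspective from a point, and a short incidence argument (or: the four vertices lie on the two conics $C_0,C_1$, and the perspectivity from the Laplace point maps $C_0$ to $C_1$ as plane conics, forcing the four vertices into the span of the two perspective axes, which is a plane) gives coplanarity; alternatively one reads (i) off directly from the fact that the four vertices are the four intersection points of the pair of planes $\pi_0,\pi_1$ (conics $C_0\subset\pi_0$, $C_1\subset\pi_1$) with the pair of planes $\sigma_0,\sigma_1$ carrying the conjugate boundary conics --- but that would put all four in $\pi_0\cap\pi_1 = a^2$, which is false, so the correct argument is the perspectivity one. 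For (iii): along each characteristic conic the tangent planes envelop a quadratic cone, and the generators of that cone are the tangents of the conjugate curves (stated in the caption of Fig.~\ref{fig:supercyclides}). Hence the tangent to the $u$-curve at a vertex and the tangent to the $v$-curve at the same vertex are both relevant; I would show that the tangent to $C_0$ at $f(u_0,v_0)$ and the tangent to $C_0$ at $f(u_1,v_0)$ meet on $a^2$ (two tangent lines to a plane conic meet in the plane of the conic, and the pole of a chord lies where appropriate --- here the relevant meeting point is on the axis $a^2$ because of the cone structure), and similarly for the conjugate direction, so that the eight tangents pair up into four focal points lying on $a^1\cup a^2$; the ``intersect cyclically'' claim is then that consecutive tangents around the patch meet, which is precisely the condition for the eight tangents to form an elementary hexahedron of a discrete torsal line complex, and its focal quadrilaterals are planar (the four focal points on each characteristic line are collinear, in fact lie on $a^1$ or $a^2$), so by Lemma~\ref{lem:fundamental_cubes} it is a fundamental line complex cube. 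Finally (iv): the tangent plane at a vertex $f(u_0,v_0)$ is spanned by the two tangents (to the $u$- and $v$-conics there); from (iii) we know the incidence pattern of the eight tangents, and the four tangent planes are the four focal planes of one direction of that fundamental hexahedron --- so concurrence of the four tangent planes is exactly property~\iref{item:lem_fun_cubes_concurrent_planes_one} of Lemma~\ref{lem:fundamental_cubes}, which holds because the hexahedron is fundamental; alternatively, the common point is the vertex of the tangency cone along one of the boundary conics intersected appropriately, and one checks it lies on all four tangent planes using the cone-generator description. The main obstacle I anticipate is (iii): correctly identifying the eight tangents as a torsal hexahedron requires showing the right four pairs of tangents intersect (cyclic adjacency, not the diagonal pairs), and this needs the conjugacy of the two families together with the cone property, not just one conic at a time.
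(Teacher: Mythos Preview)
Your route through (iii) and (iv) via Lemma~\ref{lem:fundamental_cubes} matches the paper's, and deriving (i) from (ii) is right. The real gap is in (ii), which is the heart of the proposition. You try to extract the perspectivity from the pencil structure alone: since the conjugate conic $\gamma_u=f(u,\cdot)$ lies in a plane $\sigma(u)$ through $a^1$, the chord $f(u,v_0)\vee f(u,v_1)$ lies in $\sigma(u)$ and hence meets $a^1$ at some point $P(u)$. But nothing in your argument forces $P(u)$ to be \emph{independent of $u$}; the pencil structure yields a family of chords all meeting $a^1$, not a pencil of chords through a single point. (Your parenthetical that $f(u_0,\cdot)$ and $f(u_1,\cdot)$ ``meet in a point on $a^1$'' is false for generic conics of the family, cf.\ Remark~\ref{rem:singularities}.) This is precisely where the tangent cone property must enter, and you never invoke it for (ii). The paper proceeds differently: it quotes Degen's result that the conjugate curves induce a \emph{projective} map $\tau:\pi_0\to\pi_1$ between the planes of $C_0,C_1$, and then proves $\tau$ is a perspectivity via a Desargues argument. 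One picks three points on $C_0$ with tangents $g_1,h_1,l_1$ and their $\tau$-images on $C_1$ with tangents $g_2,h_2,l_2$; the tangent cone property makes $g_1\cap g_2$, $h_1\cap h_2$, $l_1\cap l_2$ well-defined cone apices, so the two tangent triangles are axially perspective, hence centrally perspective from a point $O$, and $\tau$ is the projection from $O$. Coplanarity (i) and the identification of $O$ as a Laplace point drop out simultaneously.

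A smaller slip in (iii): you assert that the two tangents to $C_0$ at $f(u_0,v_0)$ and $f(u_1,v_0)$ meet \emph{on $a^2$}. They meet because they are coplanar in $\pi_0$, but at the pole of the chord with respect to $C_0$, which has no reason to lie on $a^2$. The correct incidences for the torsal cube are: tangents to the \emph{same} boundary conic meet by planarity of that conic; tangents to \emph{opposite} boundary conics of one family at isoparametric vertices meet at the apex of the tangency cone along the shared conjugate conic (this apex lies on the non-corresponding characteristic line). With the cube in hand, the focal quadrilateral to check planar is the vertex quadrilateral itself, supplied by (i), and then Lemma~\ref{lem:fundamental_cubes} gives both the ``fundamental'' conclusion of (iii) and the concurrence (iv), exactly as you outline.
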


\begin{figure}[htp]
\begin{center}
\parbox{.35\textwidth}{\includegraphics[width=.35\textwidth]{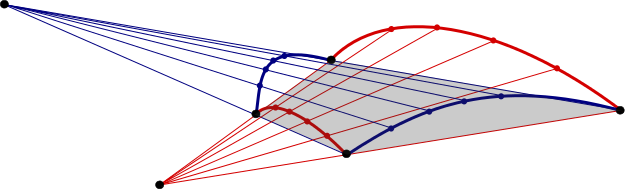}}
\qquad
\qquad
\parbox{.5\textwidth}{ \input{ 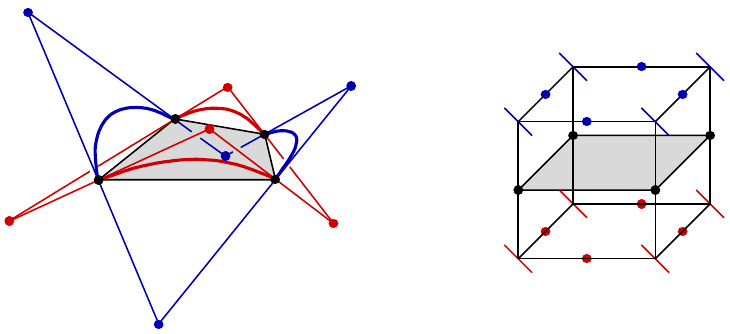_t } }
\end{center}
\caption{Opposite boundary curves of an SC-patch are perspective from the
non-corresponding Laplace point of the vertex quadrilateral (left). The
tangents to the boundary of an SC-patch at vertices constitute an elementary
hexahedron of a fundamental line complex (middle and right: geometric and
combinatorial, respectively).}
\label{fig:scpatches_projection}
\end{figure}

\begin{proof}
We start with a fact concerning Blutel surfaces that may be found in
\cite{Degen:1986:ZweifachBlutel}, that is, for any surface of Blutel the
conjugate curves induce projective transformations between the generating
conics. From that it may be deduced easily that for a double Blutel surface
those projective maps are perspectivities. Applied to opposite
boundary curves of an SC-patch, we conclude that the patch vertices are coplanar
and that the centers of perspectivity have to be the Laplace points of the
vertex quadrilateral.

To verify the perspectivity statement, consider two characteristic conics $c_1$
and $c_2$ of one family on a supercyclide for which we assume that they are
contained in distinct planes $\pi_1$ and $\pi_2$.  According to the above, the
conjugate family of conics induces a projective map $\tau : \pi_1 \to \pi_2$
that identifies isoparametric points on $c_1$ and $c_2$. In order to see that
$\tau$ is a perspectivity, take three points $x_1,y_1,z_1$ on $c_1$ and denote
the tangents in those points by $g_1,h_1,l_1$, respectively. Further let
$(x_2,y_2,z_2,g_2,h_2,l_2) = \tau (x_1,y_1,z_1,g_1,h_1,l_1)$ be the
corresponding points and tangents with respect to $c_2$.  As, e.g., $x_1$ and
$x_2$ are isoparametric, the tangents $g_1$ and $g_2$ intersect at the tip $x$
of the cone that is tangent to the supercyclide along the conjugate curve
through $x_1$ and $x_2$. One obtains three tangent cone vertices $x,y,z$ of that
kind and also the further intersection points $q_i = g_i \cap h_i$, $r_i = g_i
\cap l_i$, $s_i = h_i \cap l_i$, $i=1,2$, see Fig.~\ref{fig:perspectivity_proof}.

\begin{figure}[htb]
\begin{center}
 \input{ 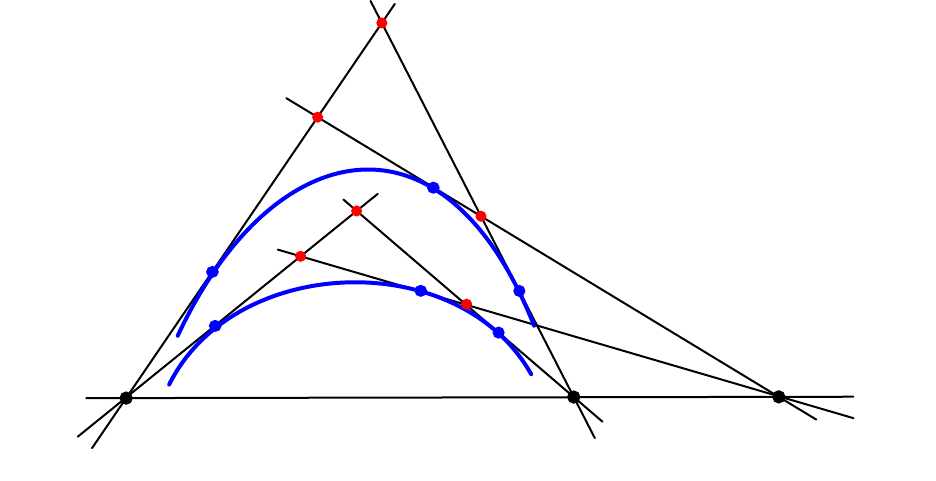_t } 
\end{center}
\caption{Triangles formed by the tangents to $c_1$ and $c_2$ in corresponding
points $x_1,y_1,z_1$ and $x_2,y_2,z_2$, are perspective.}
\label{fig:perspectivity_proof}
\end{figure}

From the observation $\tau(x,y,z,q_1,r_1,s_1) = (x,y,z,q_2,r_2,s_2)$ it follows
that the restrictions of $\tau$ to the lines $g_1$, $h_1$, and $l_1$, all being
projective maps, are determined by $(x,q_1,r_1) \mapsto (x,q_2,r_2)$, $(x,q_1,s_1)
\mapsto (x,q_2,s_2)$, and $(x,s_1,r_1) \mapsto (x,s_2,r_2)$, respectively.  On
the other hand, the three planes $g_1 \vee g_2, h_1 \vee h_2$, and $l_1 \vee
l_2$ intersect in one point $O$, therefore the pairs $(q_1,q_2), (r_1,r_2)$, and
$(s_1,s_2)$ of corresponding tangent intersections are each perspective from
$O$. From this we may conclude that $\tau : \pi_1 \to \pi_2$ is the perspectivity from $O$.

It remains to verify the assertions
\iref{item:prop_patches_concurrent_tangent_congruence_quad} and
\iref{item:prop_patches_concurrent_tangent_planes}. Now the intersection
statement of \iref{item:prop_patches_concurrent_tangent_congruence_quad} follows
immediately from the tangent cone property and planarity of the characteristic
conics and simply means that the eight tangents to the boundary curves at
vertices constitute a line complex cube. The vertex quadrilateral of the patch
is a focal quadrilateral for that complex cube and planarity of vertices implies
that the complex cube is fundamental due to Lemma~\ref{lem:fundamental_cubes}~\iref{item:lem_fun_cubes_planar_quad_one}.  Therefore, the equivalent statement
\iref{item:lem_fun_cubes_concurrent_planes_any} of Lemma~\ref{lem:fundamental_cubes}
also holds and yields \iref{item:prop_patches_concurrent_tangent_planes} of this
proposition.
\end{proof}

\begin{remark}
\label{rem:singularities}
In the complexified setting, a generic quartic supercyclide possesses four
isolated singularities -- two at each characteristic line -- and one singular conic
\cite{Pratt:1997:QuarticSupercyclidesBasic}. In fact, it can be easily seen that
real intersections of a supercyclide with its characteristic lines must be
singularities: Suppose that the characteristic line of the first family (the
axis of the pencil of planes supporting the first family of conics) intersects
the cyclide in a point~$x$. Generically, $x$ lies on a non-degenerate
conic of the first familiy and
Proposition~\ref{prop:properties_supercyclidic_patches}~\iref{item:prop_patches_projection}
implies that all conics of the first family
pass through $x$, that is, $x$ is a singular point.  On the
other hand, if a generic pair of conics of one family intersects (automatically
at the corresponding characteristic line), the intersection points are singular
points of the cyclide by the same argument.
\end{remark}

\paragraph{Notation and terminology for SC-patches}
We denote the vertices of an SC-patch $f:[a_0,a_1] \times [b_0,b_1]
\to \RP^3$ by
\begin{equation*}
x = f(a_0,b_0),\
x_1 = f(a_1,b_0),\
x_{12} = f(a_1,b_1),\
x_2 = f(a_0,b_1),
\end{equation*}
which gives rise to the (oriented) \emph{vertex quadrilateral}
$(x,x_1,x_{12},x_2)$. Conversely, we say that an SC-patch is \emph{adapted} to a
quadrilateral if it is the vertex quadrilateral of the patch.
We define
\begin{equation*}
\arcs = \left\{ \text{conic arcs in } \RP^3 \right\}
\end{equation*}
and write the boundary curve of an SC-patch that connects the vertices $x$ and
$x_i$ as $b^i \in \arcs$. The supporting plane of $b^i$ in $\RP^3$ is usually
denoted by $\pi^i$ and we refer to those planes as \emph{boundary planes} of the
patch.  Further, we denote by $t^i$ and $t^i_i$ the tangents to $b^i$ at $x$ and
at $x_i$, respectively, as depicted in Fig.~\ref{fig:patch_notation}.

\begin{figure}[htb]
\begin{center}
 \input{ 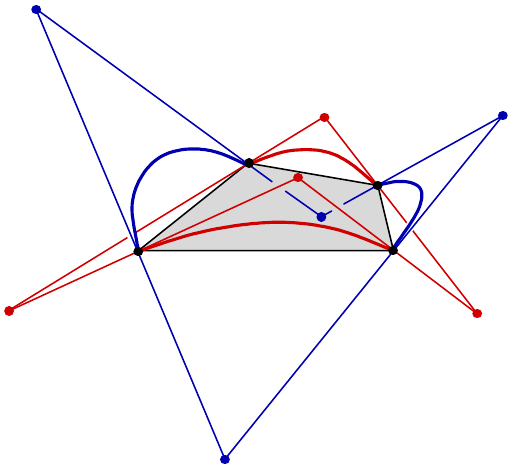_t } 
\end{center}
\caption{The boundary of an SC-patch, labeled according to our usual terminology.}
\label{fig:patch_notation}
\end{figure}

Usually we refer to the characteristic lines of an SC-patch as $a^1$ and $a^2$,
where $a^i$ is the line through the Laplace point
\begin{equation*}
y^i = (x \vee x_i) \cap (x_j \vee x_{12}),\quad i \ne j.
\end{equation*}
Obviously, the characteristic lines may be expressed in
terms of tangents as
\begin{equation*}
a^i = (t^i \vee t^i_i) \cap (t^i_j \vee t^i_{12}),\quad i \ne j.
\end{equation*}

\paragraph{Extension of planar quadrilaterals to SC-patches}
A supercyclidic patch is uniquely determined by its boundary, since points $\tilde
x_i \in b^i$, $i = 1,2$, on the boundary of an SC-patch parametrize points
$\tilde x_{12}$ in the interior as follows: Given the characteristic lines $a^1$
and $a^2$, the corresponding point $\tilde x_{12}$ may be obtained as
intersection of the three planes $(x \vee \tilde x_1 \vee \tilde x_2)$, $(\tilde
x_1 \vee a^2)$, and $(\tilde x_2 \vee a^1)$.

According to Proposition~\ref{prop:properties_supercyclidic_patches}, valid boundaries of
SC-patches that are adapted to a given vertex quadrilateral may be constructed
as follows: Two adjacent conical boundary curves may be chosen arbitrarily as
well as the two opposite boundary planes.  The remaining boundary curves are
then determined as perspective images of the initial boundary curves from the
corresponding Laplace points of the vertex quadrilateral. It turns out that it
is very convenient to emphasize the tangents to boundary curves at vertices in
the suggested construction, which gives rise to the following

\paragraph{Construction of adapted SC-patches}
Denote the given (planar) vertex quadrilateral by $(x,x_1,x_{12},x_2)$. For each
$i = 1,2$, first choose $t^i \ni x$ freely and then choose $t^i_1 \ni x_1$ and
$t^i_2 \ni x_2$ such that $t^i \cap t^i_j \ne \emptyset$. According to
Proposition~\ref{prop:properties_supercyclidic_patches}~\iref{item:prop_patches_concurrent_tangent_congruence_quad},
the tangent $t^i_{12}$ is then determined subject to $t^i_{12} = (x_{12} \vee
t^i_1) \cap (x_{12} \vee t^i_2)$. Further, for each coordinate direction choose
a conic segment $b^i$ in the plane $\pi^i = t^i \vee t^i_i$ that is tangent to
$t^i$ at $x$ and to $t^i_i$ at $x_i$, respectively. The opposite segment $b^i_j$
is then obtained as the perspective image $\tau^j(b^i)$, where
\begin{equation*}
\tau^j : \pi^i \to \pi^i_j,
\quad
p \mapsto (p \vee y^j) \cap \pi^i_j
\end{equation*}
is the perspectivity from the Laplace point $y^j = e^j \cap e^j_i$.
\begin{remark}
\label{rem:patch_construction_dof}
In the above construction, we have exploited 10 independent degrees of freedom:
For each direction $i$, 4 DOF are used in the construction of the corresponding
tangents and 1 DOF is used for the adapted boundary curve $b^i$. The latter
corresponds to the free choice of one additional point in the prescribed
boundary plane, modulo points on the same segment.\footnote{Recall that a conic
is uniquely determined by 5 points, where two infinitesimally close points
correspond to a point and a tangent in that point (a 1-dimensional
contact element).}
\end{remark}

\section{2D supercyclidic nets}
\label{sec:2d_supercyclidic_nets}

We will now analyze the extension of a given Q-net $x:\Z^2 \to \RP^3$ by one
adapted supercyclidic patch for each elementary quadrilateral. At the common
edge of two neighboring quadrilaterals we require that the tangent cones
coincide.

\begin{definition}[Tangent cone continuity]
\label{def:tcc_property}
Let $Q$ and $\tilde Q$ be two planar quadrilaterals that share an edge
$e$.  Two adapted SC-patches $f$ and $\tilde f$ with common boundary
curve $b$ that joins the vertices of $e$ are said to form a
\emph{tangent cone continuous join}, or \emph{TCC-join} for short, if
the tangency cones to $f$ and $\tilde f$ along $b$ coincide. We also
say that $f$ and $\tilde f$ have the \emph{TCC-property} and,
accordingly, satisfy the \emph{TCC-condition}.
\end{definition}

\begin{figure}[hbt]
\begin{center}
\parbox{.31\textwidth}{\includegraphics[width=.25\textwidth]{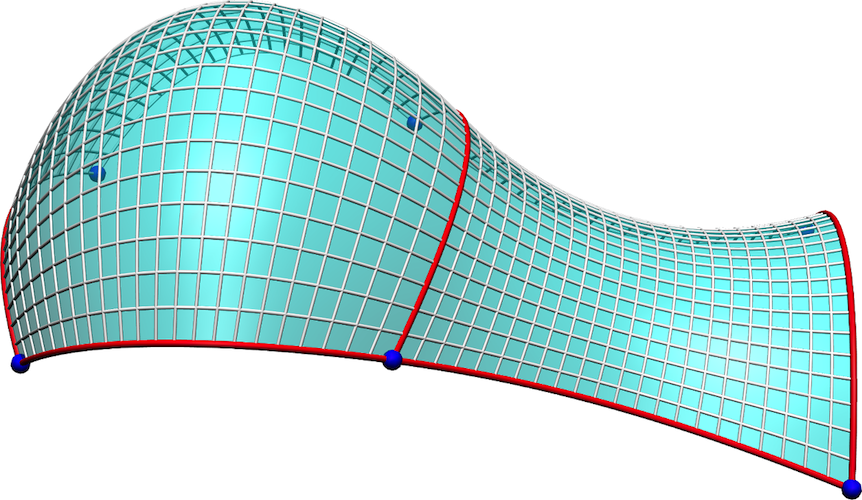}}
\parbox{.31\textwidth}{\includegraphics[width=.25\textwidth]{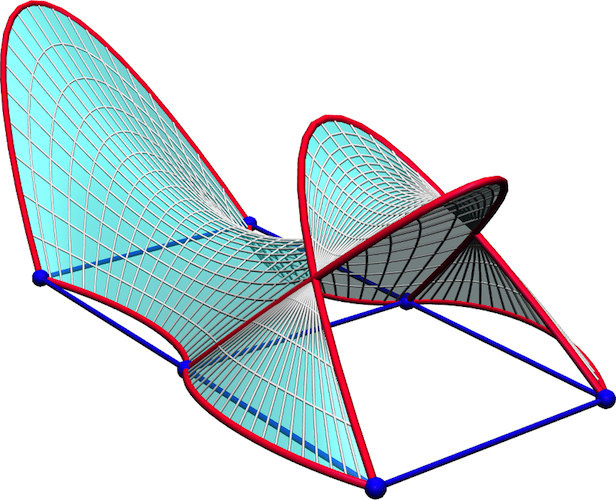}}
\parbox{.28\textwidth}{\includegraphics[width=.28\textwidth]{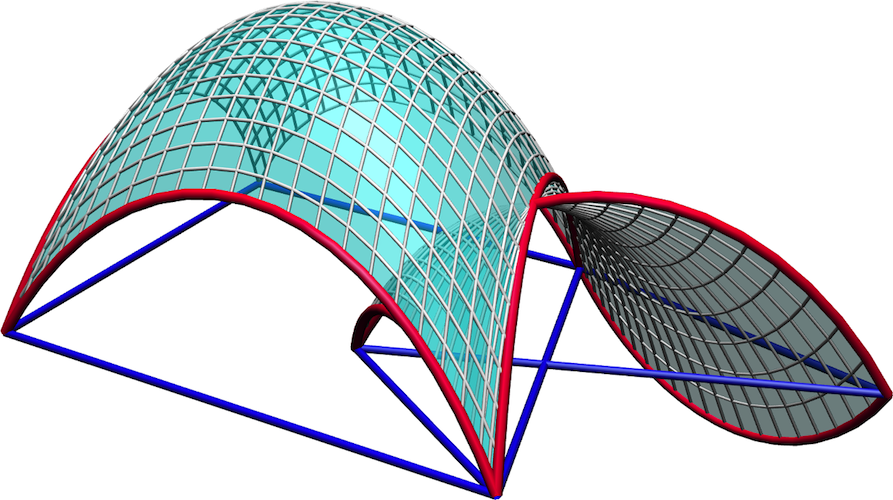}}
\end{center}
\caption{Pairs of SC-patches that form TCC joins, that is, tangency cones
coincide along common boundary curves. As depicted, this does not exclude cusps.}
\label{fig:partial_cusps}
\end{figure}

The TCC-condition yields a consistent theory for the extension of discrete
conjugate nets by surface patches parametrized along conjugate directions
in~$\RP^3$.  As a quadratic cone is determined by one non-degenerate conic
section and two generators, one immediately obtains the following
characterization of TCC-joins between SC-patchs.

\begin{lemma}
\label{lem:tcc_characterization}
Two SC-patches $f,\tilde f$ with common boundary curve $b$ form a TCC-join
if and only if their tangents in the common vertices coincide.
\end{lemma}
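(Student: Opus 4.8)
The plan is to prove Lemma~\ref{lem:tcc_characterization} by reducing the coincidence of tangency cones along the shared conic $b$ to the coincidence of finitely many tangent lines, exploiting the quadratic (degree-two) nature of the cones. First I would recall the structural fact used just above the statement: a quadratic cone in $\RP^3$ is determined by a single non-degenerate conic section together with two of its generators (equivalently, a conic and two tangent planes, or a conic and two points of the cone not on that conic). Since the tangency cone of an SC-patch $f$ along $b$ is by the Blutel property a genuine quadratic cone whose generators are the tangents to the conjugate parameter curves, restricting this cone to a plane transverse to $b$ yields a non-degenerate conic; thus the cone of $f$ along $b$ is pinned down by that transverse conic plus two generators. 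The transverse conic is the same for $f$ and $\tilde f$ precisely because both cones are tangent to the common surface strip along the common curve $b$ — more carefully, the cone along $b$ is the envelope of the tangent planes of the patch at the points of $b$, and the first-order data of a supercyclide patch along a characteristic conic $b$ is already encoded in $b$ together with the tangent cone; so it suffices to match two generators.

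The key steps, in order, are: (1) identify the two vertices $v_0, v_1$ of the edge bounding $b$ as two distinguished points lying on the tangency cone of each patch, with the generator of the cone through $v_k$ being exactly the tangent $t_k$ to the conjugate boundary curve of the respective patch at $v_k$ (this is the tangent-cone property from Definition~\ref{def:supercyclide}, read at the endpoints of $b$); (2) observe that the conic $b$ itself, being a non-degenerate section of each cone, together with these two generators, determines the cone — so if the generators at $v_0$ and $v_1$ agree for $f$ and $\tilde f$, the two cones coincide; (3) conversely, if the cones coincide, then in particular their generators through $v_0$ and through $v_1$ coincide, and those generators are the tangents to the transverse boundary curves at the shared vertices, giving the "only if" direction immediately. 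For the "if" direction one must also check that matching tangents at the two shared vertices really does force the \emph{same} two generators (not merely two generators spanning the same planes); this follows since a generator of a quadratic cone through a given point of a fixed non-degenerate section is unique, so prescribing the tangent line at $v_k$ fixes that generator uniquely, and two quadratic cones sharing a non-degenerate conic and two generators are equal.

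The step I expect to be the main obstacle is making precise that the tangency cone of an SC-patch along its boundary conic $b$ depends only on $b$ and two generators — one must be careful that "two generators" here means two generators through points \emph{off} the conic $b$ (namely the cone vertices in directions conjugate to $b$), and that these are exactly the lines $t^i$, $t^i_i$ at the endpoints of $b$ in the notation of the patch. A clean way to sidestep delicate envelope computations is to invoke Proposition~\ref{prop:properties_supercyclidic_patches}: the eight boundary tangents of an SC-patch form an elementary hexahedron of a fundamental line complex, and the tangent cone along $b = b^i$ is precisely the cone over the conic $b^i$ with vertex on the opposite characteristic line, generated by the two tangents $t^i, t^i_i$ of the transverse family at the endpoints of $b^i$. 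Then the lemma says: the cone over $b$ with two prescribed generators through the endpoints of $b$ coincides for $f$ and $\tilde f$ iff those generators (the shared-vertex tangents) coincide — which is exactly the statement that a quadratic cone is determined by one conic and two generators. I would phrase the proof in two or three sentences along these lines, citing Definition~\ref{def:supercyclide} for the tangent-cone property at the endpoints of $b$ and the elementary fact about quadratic cones for the reconstruction, and noting that the "only if" direction is immediate since a generator through a fixed point of the fixed conic $b$ is unique.
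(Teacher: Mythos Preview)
Your approach is correct and is essentially the paper's own one-line justification: the sentence immediately preceding the lemma states that a quadratic cone is determined by one non-degenerate conic section together with two generators, and since the shared boundary conic $b$ is itself such a section of both tangency cones while the transverse tangents at the two common vertices are exactly two generators, the equivalence is immediate. Your detour through an auxiliary ``transverse conic'' and the concern about generators through points \emph{off} $b$ are unnecessary---the relevant generators pass through the endpoints of $b$ (points \emph{on} $b$), and $b$ is already the conic section that pins down the cone.
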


Consider coordinates so that $\tilde Q = Q_i$ is the shift of $Q$ in direction
$i$ and accordingly rename $(\tilde f,b) \leadsto (f_i,b^j_i)$.  If the relevant
tangents coincide, they may be expressed as intersections of boundary planes
\begin{equation*}
t^i_i = \pi^i \cap \pi^i_i,
\quad
t^i_{ij} = \pi^i_j \cap \pi^i_{ij},
\end{equation*}
and we see that the TCC-property implies that the four involved boundary
planes of direction $i$ intersect in one point. Obviously, this means that the
two characteristic lines
\begin{equation*}
a^i = \pi^i \cap \pi^i_j,
\quad
a^i_i = \pi^i_i \cap \pi^i_{ij}
\end{equation*}
associated with the direction $i$ intersect -- the point of intersection being
the tip of the coinciding tangency cone.

\begin{definition}[2D supercyclidic net]
\label{def:sc_net_2d_patches}
A 2-dimensional Q-net together with SC-patches that are adapted to its
elementary quadrilaterals is called a \emph{2D supercyclidic net} (2D
\emph{SC-net}) if each pair of patches adapted to edge-adjacent quadrilaterals
has the TCC-property.
\end{definition}

\paragraph{Affine supercyclidic nets and $C^1$-joins}
An interesting subclass of supercyclidic nets are nets that form piecewise
smooth $C^1$-surfaces, i.e., the joins of adjacent patches are not allowed to
form cusps and the individual patches must not contain singularities (see
Fig.~\ref{fig:partial_cusps}). Concise statements concerning the $C^1$-subclass of
tangent cone continuous supercyclidic patchworks are tedious to formulate and to
prove. As a starting point for future investigation of the $C^1$-subclass we
only capture some observations in the following.

A Q-net $x : \Z^2 \to \RP^3$ does not contain information about edges connecting
adjacent vertices. In the affine setting in turn one has unique finite edges,
which allow to distinguish between the different types of non-degenerate vertex
quadrilaterals> convex, non-convex, and non-embedded. We note the following
implications for adapted SC-patches:

\begin{enumerate}[(i)]
\item Finite SC-patches adapted to \emph{non-embedded} vertex quadrilaterals
  always contain singular points: The intersection point of one pair of edges is
  the Laplace point and it lies on the characteristic line of the pencil
  containing the conic arcs. But if these arcs are finite they have to intersect
  the axis and hence contain a singular point of the surface, see
  Remark~\ref{rem:singularities} and Fig.~\ref{fig:affine_quads}, left.
\item If the vertex quadrilateral is \emph{non-convex} an adapted SC-patch
	cannot be finite: For any SC-patch, opposite boundary curves are perspective
	and hence lie on a unique cone whose vertex is one of the Laplace points of
	the vertex quadrilateral. For non-convex vertex quadrilaterals, the Laplace
	points are always contained in edges of the quadrilateral. Thus, the
	previously mentioned cones always give rise to a situation as depicted in
	Fig.~\ref{fig:affine_quads}, right.  We observe that one of the edges $(x,x_2)$ or
	$(x_1, x_{12})$ has to lie outside of the cone and the other one inside.
	Clearly, the conic boundary arc that connects the vertices of the outside
	segment has to be unbounded.
\item If the vertex quadrilateral is \emph{convex} (and embedded) then there
  exist finite patches without singularities, but adapted patches have
  singularities if opposite boundary arcs intersect.
\end{enumerate}

\begin{figure}[htb]
  \parbox{0.47\textwidth}{
	 \input{ 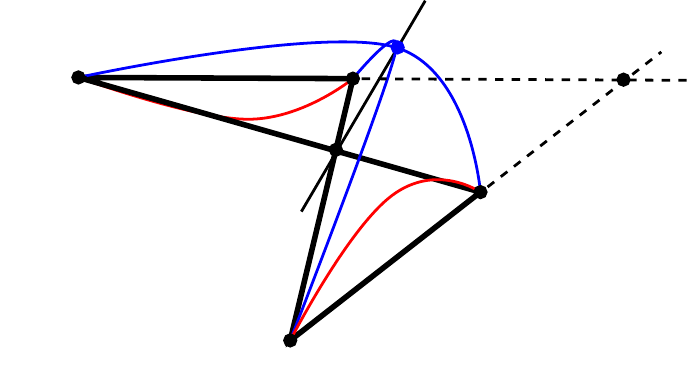_t } 
	}
  \hspace{2em}
  \parbox{0.4\textwidth}{
	 \input{ 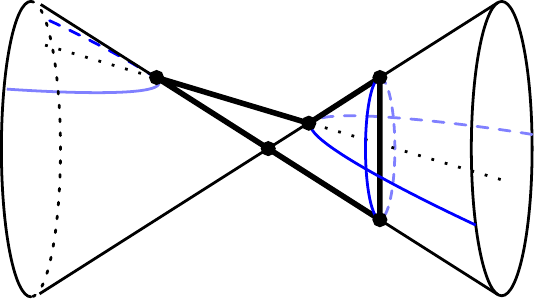_t } 
	}
	\caption{SC-patches adapted to different types of affine quadrilaterals. Left:
	For finite patches adapted to non-convex quadrilaterals, two opposite arcs
	have to intersect. Right: Different shades show the possible pairs of
	perspective conic arcs for a non-convex vertex quadrilateral related by the
	perspectivity with respect to the Laplace point~$y^1$.}
  \label{fig:affine_quads}
\end{figure}

Accordingly, only affine Q-nets composed of convex quadrilaterals may be
extended to bounded $C^1$-surfaces (projective images thereof also being $C^1$,
but possibly not bounded). In that context, note that the cyclidic nets of
\cite{BobenkoHuhnen-Venedey:2011:cyclidicNets} are defined as extensions of
circular nets with embedded quadrilaterals (automatically convex) in order to
avoid singularities.

\paragraph{Discrete data for a 2D supercyclidic net}
We have seen that supercyclidic patches are completely encoded in their boundary
curves and that the TCC-property may be reduced to coinciding tangents in common
vertices. Accordingly, a 2-dimensional supercyclidic net is completely encoded
in the following data:

\begin{enumerate}[(i)]
\item The supporting Q-net
$x: \Z^2 \to \RP^3$.
\item The congruences of tangents
$t^1,t^2 : \Z^2 \to \linespace^3$.
\item The boundary curves
$b^1,b^2 : \Z^2 \to \arcs$ with
$b^i$ tangent to $t^i$ at $x$ and to $t^i_i$ at $x_i$
and such that for each quadrilateral opposite
boundary curves are perspective from the corresponding Laplace point.
\end{enumerate}

\begin{remark}
\label{rem:discrete_data_2d}
In order to obtain discrete data that consists of points and lines only, for
given Q-net $x$ and tangents $t^i$ the conic segment $b^i$ may be represented by
one additional point in its supporting plane $\pi^i$ (see also
Remark~\ref{rem:patch_construction_dof}).  Accordingly, the boundary
curves are then described by maps $b^i : \Z^2 \to \RP^3$ with $b^i \in t^i \vee
t^i_i$. This representation does not only fix the conic arc but also a notion of
parametrized supercyclidic nets.
\end{remark}

The tangents at vertices of a supercyclidic net are obviously determined by the
conic splines composed of the patch boundaries.  For the formulation of a
related Cauchy problem it is more convenient to capture the tangents and
boundary curves separately.

\paragraph{Cauchy data for a 2D supercyclidic net}
\label{par:cauchy_2d}
According to Lemma~\ref{lem:2d_systems}~(L), the tangents of a 2D SC-net are
uniquely determined by the tangents along coordinate axes if the supporting
Q-net is given. On the other hand, if all the tangents are known, we know all
boundary planes and may propagate suitable initial boundary splines according to
the perspectivity property of SC-patches that is captured by
Proposition~\ref{prop:properties_supercyclidic_patches}~\iref{item:prop_patches_projection}.
Hence Cauchy data of a 2D supercyclidic net consists of, e.g.,

\begin{enumerate}[(i)]
\item
The supporting Q-net $x : \Z^2 \to \RP^3$.
\item 
  The tangents along two intersecting coordinate lines for each coordinate
  direction
\begin{equation*}
t^i|_{\coordsurf{j}},\quad i,j=1,2.
\end{equation*}
\item
Two conic splines
\begin{equation*}
b^i|_{\coordsurf{i}},\quad i=1,2,
\end{equation*}
such that $b^i$ is tangent to $t^i$ at $x$ and to $t^i_i$ at $x_i$,
cf. Fig.~\ref{fig:2d_cyclidic_cauchy}.
\end{enumerate}

\begin{figure}[htb]
\begin{center}
 \input{ 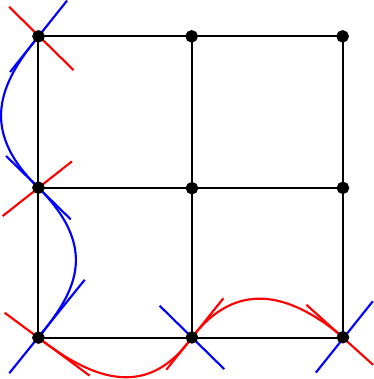_t } 
\end{center}
\caption{Cauchy data for a 2D cyclidic net.}
\label{fig:2d_cyclidic_cauchy}
\end{figure}

\paragraph{Rigidity of supercylidic nets}
An SC-patch is determined by its boundary curves uniquely.  Further, one
observes that for fixed Q-net and fixed tangents the variation of a boundary
curve propagates along a whole quadrilateral strip due to the perspectivity
property.  Moreover, a local variation of tangents is not possible for a
prescribed Q-net, as each tangent is determined uniquely by the contained vertex
of the Q-net and two adjacent tangents. This shows that it is not possible to
vary a supercyclidic net locally. However, we know that local deformation of 2D
Q-nets is possible\footnote{Local deformation of higher dimensional
Q-nets is not possible as they are governed by a 3D system.}
\cite{Hoffmann:2010:PQDeformation}, which gives rise to the question whether
effects of the (global) variation of a given 2D SC-net might be minimized by
incorporation of suitable local deformations of the supporting Q-net.

\pagebreak

\paragraph{Q-refinement induced by 2D supercyclidic nets}
Obviously, any selection of characteristic conics on an SC-patch induces a
Q-refinement of its vertex quadrilateral since also the vertices of the induced
``sub-patches'' are coplanar.  Accordingly, any continuous supercyclidic
patchwork induces an arbitrary Q-refinement of the supporting 2D Q-net.  This is
interesting in the context of the convergence of discrete conjugate nets to
smooth conjugate nets, but also a noteworthy fact from the perspective of
architectural geometry: 2D supercyclidic nets may be realized at arbitrary
precision by flat panels.

\paragraph{Implementation on a computer}
We have implemented two ways to experiment with supercyclidic nets. The first is
in java and solves the Cauchy problem described on page~\pageref{par:cauchy_2d},
using the framework of jReality and
VaryLab.\footnote{\texttt{http://www.jreality.de} and \texttt{http://www.varylab.com}}
We start with a Q-net with $\Z^2$ combinatorics. To create the two tangent
congruences we prescribe initial tangents at the origin and propagate them along
each coordinate axis via reflections in the symmetry planes of the edges (which does not
exploit all possible degrees of freedom). The prescribed tangent congruences on
the coordinate axes can then be completed to tangent congruences on the entire
Q-net according to Lemma~\ref{lem:2d_systems}~(L). The conic splines on the
coordinate axes are described as rational quadratic Bezier curves with
prescribed weight. The initial weights are then propagated to all edges by evaluating
suitable determinant expressions, which take into account the Q-net, the focal nets, the
weights, and the intersection points of the four tangent planes at each
quadrilateral. This yields a complete description of the resulting supercyclidic
net in terms of rational quadratic Bezier surface patches. See
Fig.~\ref{fig:sc_net_implementation} for two different SC-nets adapted to one and
the same supporting Q-net that are obtained that way.  The second approach to
the computation of supercyclidic nets is based on a global variational approach
and uses the framework of Tang et al.~\cite{tang-2014-ff}. It also makes use of
quadratic Bezier surface patches, where quadratic constraints involving
auxiliary variables guarantee that those patches constitute an SC-net.

\begin{figure}[htb]
\begin{center}
\includegraphics[width=.88\textwidth]{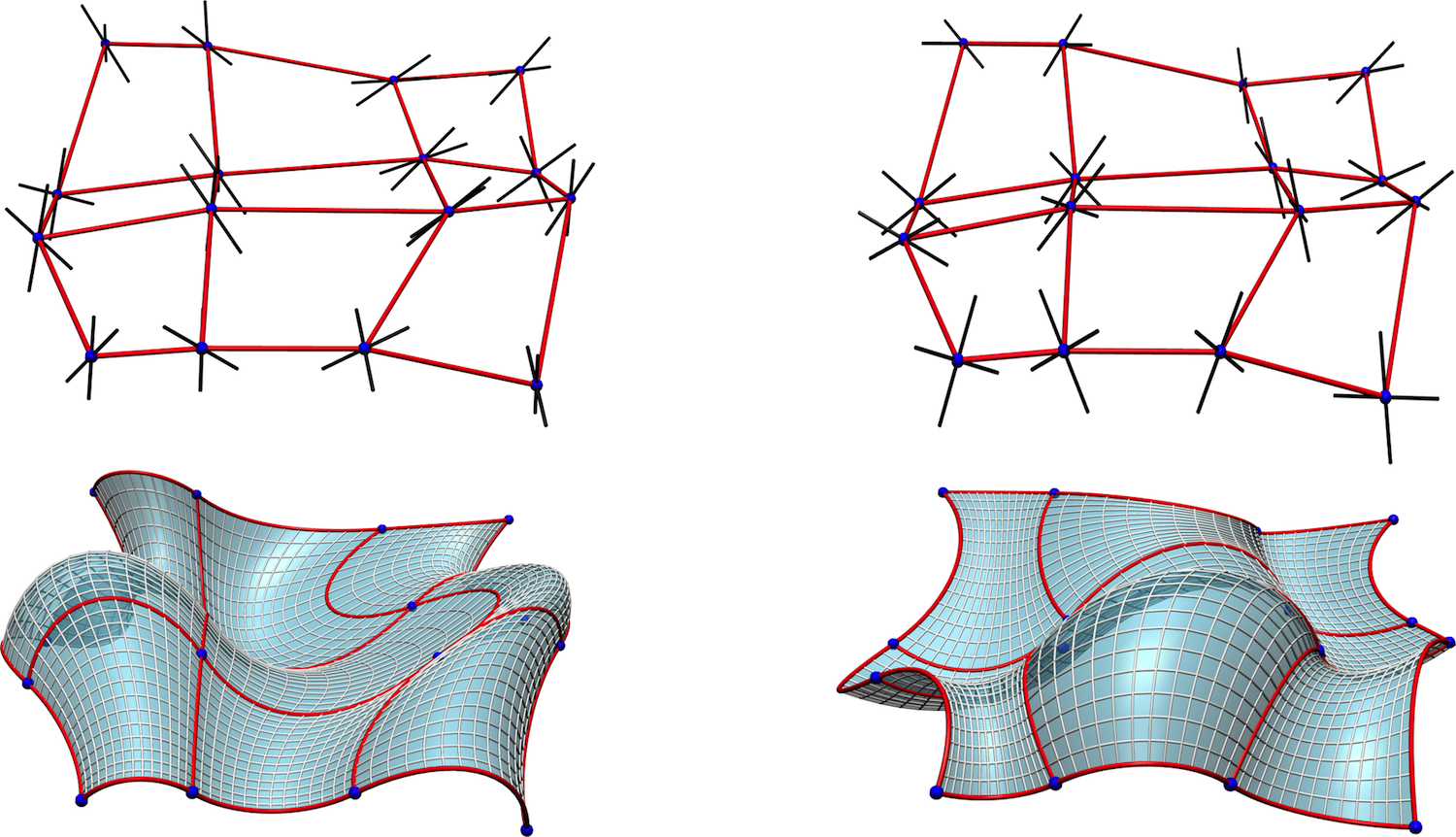}
\end{center}
\caption{Two different supercyclidic nets with same supporting Q-net. The Q-net
with tangents at vertices is drawn on top of the corresponding supercyclidic
net.}
\label{fig:sc_net_implementation}
\end{figure}

\section{3D supercyclidic nets}
\label{sec:3d_supercyclidic_nets}

Motivated by a common structure behind integrable geometries that is reflected
by the ``multidimensional consistency principle'' of discrete differential
geometry, see \cite{BobenkoSuris:2008:DDGBook}, we start out with

\begin{definition}[3D supercyclidic net]
\label{def:sc_net_3d_from_2d}
A Q-net $x:\Z^3 \to \RP^3$ together with SC-patches that are adapted to its
elementary quadrilaterals -- such that patches adapted to edge-adjacent
quadrilaterals meet in a common boundary curve associated with the corresponding
edge -- is called a \emph{3D supercyclidic net} if the restriction to any
coordinate surface is a 2D supercyclidic net.
\end{definition}

While Definition~\ref{def:sc_net_3d_from_2d} appears natural, it is not obvious that one
can consistently (that is, with coinciding boundary curves) adapt SC-patches
even to a single elementary hexahedron of a 3D Q-net.  Before proving that this
is indeed possible (Theorem~\ref{thm:cyclidic_3d_system}), we introduce some convenient
terminology.

\begin{definition}[Common boundary condition and supercyclidic cubes]
Consider several quadrilaterals that share an edge. We say that SC-patches
adapted to those quadrilaterals satisfy the \emph{common boundary condition} if
their boundary curves associated with the common edge all coincide.  Moreover,
we call a 3-dimensional Q-cube with SC-patches adapted to its faces, such that
the common boundary condition is satisfied for each edge, a \emph{supercyclidic
cube} or \emph{SC-cube} for short, cf. Fig.~\ref{fig:sc_cube}.
\end{definition}

\begin{figure}[htb]
\begin{center}
\includegraphics[width=.3\textwidth]{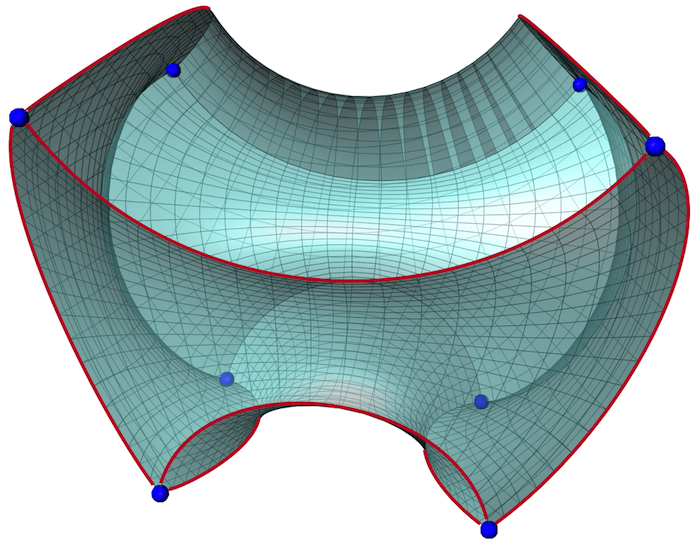}
\hspace{.13\textwidth}
\includegraphics[width=.3\textwidth]{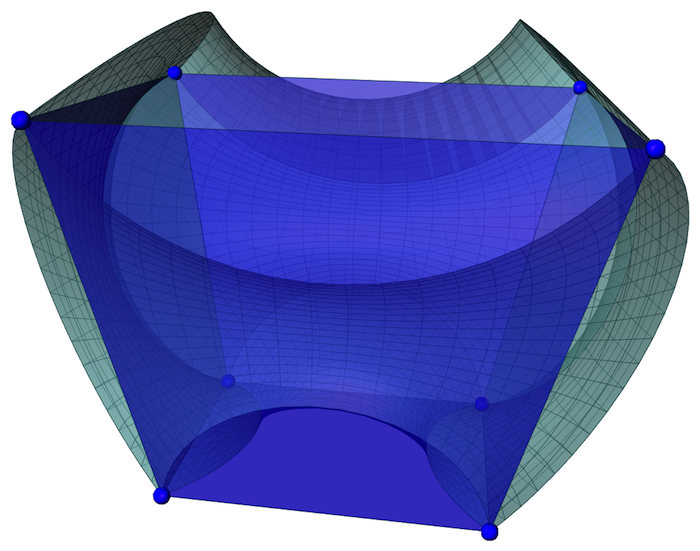}
\end{center}
\caption{A supercyclidic cube: Six SC-patches with shared boundary curves whose
vertices form a cube with planar faces.}
\label{fig:sc_cube}
\end{figure}

\begin{theorem}[Supercyclidic 3D system]
\label{thm:cyclidic_3d_system}
Three faces of an SC-cube that share one vertex, that is, three generic
SC-patches with one common vertex and cyclically sharing a common boundary curve
each, determine the three opposite faces (``SC 3D system''). Accordingly, the
extension of a Q-cube to an SC-cube is uniquely determined by the free choice of
three SC-patches that are adapted to three faces that meet at one vertex and
satisfy the common boundary condition.
\end{theorem}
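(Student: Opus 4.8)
The plan is to extend the given data over the whole elementary hexahedron in three layers -- vertices, then boundary conics, then patches -- and to show at each layer that the data is forced, the single genuine issue being a consistency check which I reduce to the multidimensional consistency of Q-nets (Theorems~\ref{thm:4d_consistency} and~\ref{thm:permutability_qnets}).

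First I would fix the supporting Q-cube. Denote the given faces by $P^{12},P^{13},P^{23}$, adapted to the three faces of a Q-cube meeting at the vertex $x$ and pairwise sharing boundary conics along $(x,x_1),(x,x_2),(x,x_3)$. They fix all vertices $x,x_i,x_{ij}$, the three boundary conics through $x$, and -- via Proposition~\ref{prop:properties_supercyclidic_patches}~\iref{item:prop_patches_projection} -- also the six boundary conics along the remaining edges not through $x_{123}$ (as perspective images inside the given patches), hence the tangents along those nine edges. The seven known vertices lie in $\RP^3$ and, by the 3D system governing Q-nets (Theorem~\ref{thm:4d_consistency}, concretely $x_{123}=\pi^{12}_3\cap\pi^{23}_1\cap\pi^{13}_2$), determine $x_{123}$ uniquely; this fixes the three opposite vertex quadrilaterals, which are automatically planar, so Proposition~\ref{prop:properties_supercyclidic_patches}~\iref{item:prop_patches_coplanar} holds for the patches still to be built.

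Next I would observe that each opposite face -- say the one on $(x_3,x_{13},x_{123},x_{23})$ -- is then forced: following the construction of adapted SC-patches, it must be the adapted SC-patch whose boundary conics along the two edges it shares with $P^{13}$ and $P^{23}$ are the already-known conics there, the two remaining boundary conics being their perspective images from the now-known Laplace points of that quadrilateral; a routine check using Proposition~\ref{prop:properties_supercyclidic_patches}~\iref{item:prop_patches_concurrent_tangent_congruence_quad} confirms this produces a genuine adapted SC-patch and that an old patch and a new patch automatically share their common boundary conic along a ``middle'' edge. The only thing not yet guaranteed is that the conic along each edge through $x_{123}$ -- e.g.\ along $(x_{23},x_{123})$, which is forced by the prospective patch on $(x_3,x_{13},x_{123},x_{23})$ as the perspective image of the conic along $(x_3,x_{13})$ and, independently, by the prospective patch on $(x_2,x_{12},x_{123},x_{23})$ as the perspective image of the conic along $(x_2,x_{12})$ -- is the same from both sides; equivalently, that $\tau^2_{[P^{12}_3]}\circ\tau^3_{[P^{13}]}$ and $\tau^3_{[P^{13}_2]}\circ\tau^2_{[P^{12}]}$ agree as parametrized maps out of the plane of $b^1$.

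This is the hard part, and I would reduce it to Q-net consistency via the refinement that the would-be SC-cube induces on its supporting Q-cube. Choosing a parameter value in each direction, i.e.\ points $u\in b^1$, $v\in b^2$, $t\in b^3$, and letting $\alpha,\beta,\gamma$ be the interior points of $P^{23},P^{13},P^{12}$ at the corresponding parameters (so that the quadrilaterals $(x,v,\alpha,t)$, $(x,u,\beta,t)$, $(x,u,\gamma,v)$ are planar by the interior-point construction for SC-patches), these three planar quadrilaterals are precisely the faces through one vertex of an elementary hexahedron, so by Theorem~\ref{thm:4d_consistency} its eighth vertex $\omega$ is uniquely determined. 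Tracing the relevant perspectivities along either of the two edge-paths from $b^1$ to the edge $(x_{23},x_{123})$ produces candidate eighth vertices that each satisfy the appropriate two of the three planarity conditions characterizing $\omega$ -- because a perspectivity carries every chord through a vertex to the chord through the image vertex inside a plane through its center -- and are pinned down by the third; hence both candidates equal $\omega$ and so coincide. Letting $u,v,t$ range over $b^1,b^2,b^3$ yields the equality of the two forced conics along the edges through $x_{123}$ pointwise, hence as parametrized conics, the endpoint tangents being recovered from a limiting pair of parameters. Organizing the perspectivities and the induced refinement so that they literally become an elementary Q-cube is where I expect the real work to lie; once it is done, all twelve boundary conics are consistently defined, all six faces are genuine adapted SC-patches sharing a common boundary curve along every edge, and the resulting SC-cube is visibly the unique extension of the three given faces -- which is the asserted SC 3D system (its $m$D consistency then following, as for Q-nets and fundamental line systems, from the general combinatorial argument recalled before Theorem~\ref{thm:4d_consistency}).
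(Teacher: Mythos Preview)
Your identification of the crux --- that the two composed Laplace perspectivities out of the plane of $b^1$ must agree --- is correct, but the argument you offer for it has two gaps.

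First, the construction of each opposite face is underdetermined as you describe it. For the face on $(x_3,x_{13},x_{123},x_{23})$, knowing the boundary conics $b^1_3$ and $b^2_3$ fixes the tangents in the relevant directions at $x_3,x_{13},x_{23}$, but \emph{not} the transverse tangent $t^1_{23}$ at $x_{23}$ (none of $P^{12},P^{13},P^{23}$ supplies it). Without $t^1_{23}$ and $t^1_{123}$ you have no target plane $\pi^1_{23}$, so ``the perspective image from the Laplace point'' is only a cone, not a conic. The missing tangents can be recovered via Lemma~\ref{lem:2d_systems}~(L) and Corollary~\ref{cor:2d_quad_line_systems}, but this step is essential and you must make it explicit; determining $\pi^i_{jk}$ is the actual content of the theorem.

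Second, your small Q-cube $(x,u,v,t,\gamma,\beta,\alpha,\omega)$ sits in the \emph{interior} of the hexahedron and does not meet the edge $(x_{23},x_{123})$. Its eighth vertex $\omega$ is an interior refinement point, whereas the two ``candidate points'' you want to compare lie on a boundary conic through $x_{23}$ and $x_{123}$; the three planarity conditions defining $\omega$ do not constrain them. The Q-refinement picture does hold eventually (Theorem~\ref{thm:sc_cube_properties}), but only \emph{after} the SC-cube exists; it is a consequence, not a tool for the construction. Your parenthetical ``where I expect the real work to lie'' is exactly right --- and that work does not reduce to the 3D system for Q-nets in the way you suggest.

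The paper's proof is direct and avoids refinement. Its key geometric observation is that for each direction $i$ the four relevant Laplace points $y^{i,j},y^{i,k},y^{i,j}_k,y^{i,k}_j$ are \emph{collinear}, all lying on $l^i = (\text{plane of }Q^{jk})\cap(\text{plane of }Q^{jk}_i)$. This collinearity makes the assignment
\[
p\ \longmapsto\ \bigl(\tau^{i,j}(p)\vee y^{i,k}_j\bigr)\cap\bigl(\tau^{i,k}(p)\vee y^{i,j}_k\bigr)
\]
well-defined on $\RP^3\setminus l^i$; since it sends lines to lines it extends to a projective transformation $\tilde\tau^i$, and the missing plane is simply $\pi^i_{jk}=\tilde\tau^i(\pi^i)$. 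The commutation $\tau^{i,j}_k\circ\tau^{i,k}=\tau^{i,k}_j\circ\tau^{i,j}$ then holds by construction.
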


\begin{proof}
First note that the three given SC-patches determine a unique supporting Q-cube
$C$, since Q-nets are governed by a 3D system and seven vertices of the cube are
already given. Further, denote the vertices of $C$ in
the standard way by $x,\ldots,x_{123}$ so that the vertex quadrilaterals
of the three patches are $Q^{ij} = (x,x_i,x_{ij},x_j)$, $1 \le i < j \le 3$,
and the common vertex is $x$.

Assume that the three given patches may be completed to an SC-cube.  Then
through each edge of $C$ in direction $i$ there is a unique boundary plane that
contains the associated boundary curve. We denote those planes by
\begin{equation*}
\pi^i = t^i \vee t^i_i \supset e^i = x \vee x_i
\end{equation*}
and the respective shifts. Now consider the perspectivities between those
planes that identify the opposite boundary curves of the adapted patches.
They are central projections through the Laplace points (cf.
Fig.~\ref{fig:laplace_projection_cube}, left)
\begin{equation*}
y^{i,j} = (x \vee x_j) \cap (x_i \vee x_{ij}),
\end{equation*}
which we denote by
\begin{equation}
\label{eq:laplace_projection}
\tau^{i,j} : \pi^i \to \pi^i_j,
\quad p \mapsto (p \vee y^{i,j}) \cap \pi^i_j,
\end{equation}
so that
\begin{equation}
\label{eq:boundary_projection_tau}
b^i_j = \tau^{i,j}(b^i).
\end{equation}
Here we make the genericity assumption that the tangents are such, that the
boundary planes do not contain the Laplace points.

Now fix one direction $i$ and consider the corresponding ``maps around the
cube'' as depicted in Fig.~\ref{fig:laplace_projection_cube}, right.

\begin{figure}[htb]
\begin{center}
\qquad
 \input{ 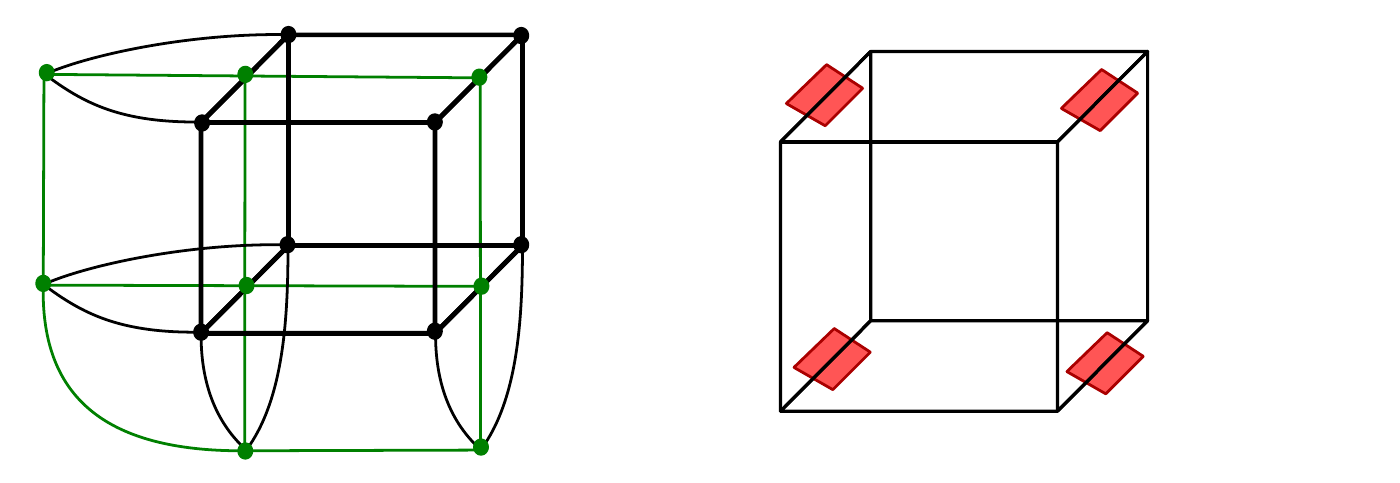_t } 
\end{center}
\caption{Laplace projections around a cube.}
\label{fig:laplace_projection_cube}
\end{figure}

On use of the fact that the four involved Laplace points
$y^{i,j},y^{i,j}_k,y^{i,k},y^{i,k}_j$ are contained in the line
\begin{equation*}
l^i = Q^{jk} \cap Q^{jk}_i,
\end{equation*}
one verifies that the relations
\eqref{eq:boundary_projection_tau} imply
\begin{equation}
\label{eq:commuting_projections}
\tau^{i,j}_k \circ \tau^{i,k} = \tau^{i,k}_j \circ \tau^{i,j}.
\end{equation}

So, given the three patches adapted to $Q^{12},Q^{23}$, and $Q^{13}$, for any
direction $i$ we know already the three planes $\pi^i,\pi^i_j,\pi^i_k$, $\left\{
i,j,k \right\} = \left\{ 1,2,3 \right\}$, and the claim of
Theorem~\ref{thm:cyclidic_3d_system} follows if we show that given those planes there
exists a unique plane $\pi^i_{jk}$ through $e^i_{jk}$ such that
\eqref{eq:commuting_projections} holds.  The fact that $\pi^i_{jk}$ is unique, if
it exists at all, follows again from the collinearity of
$y^{i,j},y^{i,j}_k,y^{i,k}$, and $y^{i,k}_j$ as this guarantees that the
construction
\begin{equation}
\label{eq:laplace_projection_constructive}
p_{jk} = (\tau^{i,j}(p) \vee y^{i,k}_j) \cap (\tau^{i,k}(p) \vee y^{i,j}_k)
\subset l^i \vee p
\end{equation}
yields a well-defined map
\begin{equation*}
\tau^i : \RP^3 \setminus \left\{ l^i \right\} \to \RP^3,
\quad
p \mapsto p_{jk}.
\end{equation*}
To see the existence of $\pi^i_{jk}$,
note that the above construction may be reversed and that $\tau^i$ maps
lines to lines according to
\begin{equation*}
l \mapsto l_{jk} = (\tau^{i,j}(l) \vee y^{i,k}_j) \cap (\tau^{i,k}(l) \vee
y^{i,j}_k).
\end{equation*}
Thus $\tau^i$ is a bijection between open subsets of $\RP^3$ that maps lines to
lines and therefore the restriction of a projective transformation $\tilde
\tau^i$ due to the fundamental theorem of real projective geometry.
Accordingly, $\pi^i_{jk} = \tilde \tau^i(\pi^i)$ is the well-defined plane we
are looking for.
\end{proof}

\begin{remark}
\label{rem:compatible_planes_via_concurrency}
Observe that in the above proof the point $q^i = \pi^i \cap \pi^i_j \cap
\pi^i_k$ is a fixed point of $\tau^i$ and hence $q^i \in \pi^i_{jk}$.
Accordingly, the plane $\pi^i_{jk}$ may be constructed as $\pi^i_{jk} = q^i \vee
e^i_{jk}$ and $q^i$ becomes the common intersection of the
four characteristic lines of the direction $i$, that is, $q^i = a^i \cap a^i_j
\cap a^i_{jk} \cap a^i_k$. This shows that for an SC-cube in fact \emph{all} planes
supporting the characteristic conics of its supercyclidic faces associated with
one and the same direction $i$ contain the point $q^i$.
\end{remark}

\begin{corollary}
\label{cor:compatible_planes}
Let $\pi^i,\pi^i_j,\pi^i_k$, and $\pi^i_{jk}$ be four planes through the
respective $i$-edges of a Q-cube $(x,x_i,x_{ij},x_{123})$ in $\RP^3$, such that
each plane contains exactly one edge. Then the central projections
\eqref{eq:laplace_projection} through the (collinear) Laplace points commute in
the sense of \eqref{eq:commuting_projections} if and only if the four planes are
concurrent.
\end{corollary}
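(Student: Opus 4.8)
The plan is to leverage the construction already carried out in the proof of Theorem~\ref{thm:cyclidic_3d_system}, since the corollary is essentially an abstraction of the geometric content of that proof freed from the supercyclidic context. The ``if'' direction is immediate from the theorem's proof: given four concurrent planes $\pi^i,\pi^i_j,\pi^i_k,\pi^i_{jk}$ through the respective $i$-edges of the Q-cube, with common point $q^i$, one checks directly that $q^i$ is a fixed point of each of the central projections $\tau^{i,j},\tau^{i,j}_k,\tau^{i,k},\tau^{i,k}_j$ (since $q^i$ lies on both the source and the target plane of each projection, and a central projection fixes every point lying in the intersection of its domain and image planes). Hence both compositions $\tau^{i,j}_k\circ\tau^{i,k}$ and $\tau^{i,k}_j\circ\tau^{i,j}$ fix $q^i$, both are projective maps $\pi^i\to\pi^i_{jk}$, and both restrict to the identity on the common line $e^i_{jk}=x_{jk}\vee x_{123}$ of the source and target planes. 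A projective map $\pi^i\to\pi^i_{jk}$ between two planes meeting in a line $e^i_{jk}$ is determined by its restriction to that line together with the image of one further point; here the restrictions to $e^i_{jk}$ agree and the extra point $q^i$ has the same image, so the two compositions coincide and \eqref{eq:commuting_projections} holds.

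For the ``only if'' direction I would argue by a dimension/degrees-of-freedom count combined with the uniqueness already established in the proof of Theorem~\ref{thm:cyclidic_3d_system}. Fix the three planes $\pi^i,\pi^i_j,\pi^i_k$ (each through its prescribed edge) and let $q^i=\pi^i\cap\pi^i_j\cap\pi^i_k$, which is a well-defined point under the genericity assumption that no two of these planes share more than a line. The ``if'' direction shows that the concurrent choice $\pi^i_{jk}=q^i\vee e^i_{jk}$ makes \eqref{eq:commuting_projections} hold. On the other hand, the pencil of planes through the edge $e^i_{jk}$ is a one-parameter family, and the construction \eqref{eq:laplace_projection_constructive} in the proof of the theorem produces, from $\pi^i$, a uniquely determined plane through $e^i_{jk}$ for which the commutation holds — the uniqueness there being a consequence of the collinearity of the four Laplace points $y^{i,j},y^{i,j}_k,y^{i,k},y^{i,k}_j$ on the line $l^i=Q^{jk}\cap Q^{jk}_i$, which guarantees that $\tau^i$ in \eqref{eq:laplace_projection_constructive} is well-defined. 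Since the concurrent plane $q^i\vee e^i_{jk}$ is one plane satisfying the commutation relation, and the relation determines at most one such plane, the concurrent plane is the only solution; hence if $\pi^i_{jk}$ satisfies \eqref{eq:commuting_projections} it must equal $q^i\vee e^i_{jk}$ and in particular passes through $q^i$, so the four planes are concurrent.

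The step I expect to require the most care is isolating exactly which genericity hypotheses are needed for the uniqueness half, so that the corollary is stated cleanly. Two things can go wrong: first, the three planes $\pi^i,\pi^i_j,\pi^i_k$ might fail to meet in a single point (e.g.\ if two of them coincide, or if they meet in a line), in which case ``concurrent'' is the wrong notion; second, and more subtly, one of the boundary planes could pass through one of the Laplace points, which is exactly the degeneracy excluded in the proof of Theorem~\ref{thm:cyclidic_3d_system} (``the tangents are such that the boundary planes do not contain the Laplace points'') and which is needed for the central projections \eqref{eq:laplace_projection} to be honest perspectivities rather than ill-defined maps. So the argument above should be understood to carry the same genericity proviso; under it, the equivalence follows, with the ``if'' direction being a short direct verification and the ``only if'' direction being the uniqueness extracted from the theorem's constructive proof. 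It remains only to remark that the line $l^i$ through the four Laplace points is the same line as in the theorem, so no new incidence facts are needed beyond those already in the proof of Theorem~\ref{thm:cyclidic_3d_system}.
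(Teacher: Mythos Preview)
Your overall strategy matches the paper's: the corollary is meant to be read off directly from the proof of Theorem~\ref{thm:cyclidic_3d_system} together with Remark~\ref{rem:compatible_planes_via_concurrency}, and your ``only if'' argument---uniqueness of the fourth plane from the theorem's constructive proof, combined with the observation that the concurrent plane is a solution---is exactly how the paper intends it.

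However, your direct verification of the ``if'' direction contains a genuine error. You assert that the two compositions $\tau^{i,j}_k\circ\tau^{i,k}$ and $\tau^{i,k}_j\circ\tau^{i,j}$, regarded as projective maps $\pi^i\to\pi^i_{jk}$, ``restrict to the identity on the common line $e^i_{jk}=x_{jk}\vee x_{123}$ of the source and target planes.'' But $e^i_{jk}$ is \emph{not} the line $\pi^i\cap\pi^i_{jk}$: by hypothesis $e^i_{jk}\subset\pi^i_{jk}$, yet $\pi^i$ is a plane through the \emph{opposite} edge $e^i=x\vee x_i$ and generically does not contain $x_{jk}$ or $x_{123}$. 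So the claim does not even make sense as stated, and on the genuine intersection line $\pi^i\cap\pi^i_{jk}$ the two compositions need not act as the identity either---they agree there only at the single point $q^i$. With just the three correspondences $x\mapsto x_{jk}$, $x_i\mapsto x_{ijk}$, $q^i\mapsto q^i$ you are one point short of determining a projective map between planes. A correct fourth correspondence is $r:=\pi^i\cap l^i\ \mapsto\ \pi^i_{jk}\cap l^i$: since every centre of projection $y^{i,j},y^{i,k},y^{i,j}_k,y^{i,k}_j$ lies on $l^i$, each individual $\tau$ sends a point of $l^i$ along $l^i$, so both compositions take $r$ to $\pi^i_{jk}\cap l^i$. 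Alternatively---and this is what the paper actually does---simply invoke Remark~\ref{rem:compatible_planes_via_concurrency}, which already identifies the unique commuting plane produced in the theorem's proof as $q^i\vee e^i_{jk}$; that gives both implications at once without a separate direct check.
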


\begin{theorem}[Properties of SC-cubes]
\label{thm:sc_cube_properties}
Characteristic conics around an SC-cube close up and bound (unique) SC-patches as
indicated in Fig.~\ref{fig:SC_cube_families}. Patches obtained this way consitute
three families $\mathcal{F}^i$, $i=1,2,3$, where the notation is such that
patches of the family $\mathcal{F}^i$ interpolate smoothly between opposite
cyclidic faces of the cube with respect to the direction $i$.
The induced SC-patches have the following properties:
\begin{enumerate}[(i)]
\item 
For each direction $i$ there exists a unique point $q^i$, which is the common
intersection of the characteristic lines $a^i$ of all patches $\mathcal{F}^j
\cup \mathcal{F}^k$, $\left\{ i,j,k \right\} = \left\{ 1,2,3 \right\}$.
\label{item:sc_cube_properties_common_point}
\item
Patches $f^i \in \mathcal{F}^i$ and $f^j \in \mathcal{F}^j$, $i \ne j$,
intersect along a common characteristic conic that is associated with their
shared net direction.
\label{item:sc_cube_properties_common_conic}
\item
Patches $f^i, \tilde f^i \in \mathcal{F}^i$ are classical fundamental transforms
of each other. In particular, this holds for opposite cyclidic faces of the
initial SC-cube.
\label{item:sc_cube_properties_ftrafo}
\end{enumerate}
\end{theorem}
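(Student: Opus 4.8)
\emph{Strategy and core step.} The crux is to show that the characteristic conics close up around the cube; once this is done, \iref{item:sc_cube_properties_common_point}--\iref{item:sc_cube_properties_ftrafo} follow quickly. Fix a direction $i$, write $\{i,j,k\}=\{1,2,3\}$, and let $b^i$ denote the boundary curve of the SC-cube along the edge $x\vee x_i$, with $b^i(a)$ its point at parameter $a$. A prospective member $f^i(a)\in\mathcal{F}^i$, $a\in[0,1]$, should be bounded by the direction-$j$ conics at $i$-parameter $a$ of the SC-patches on the faces $Q^{ij}$ and $Q^{ij}_k$, together with the direction-$k$ conics at $i$-parameter $a$ of the patches on $Q^{ik}$ and $Q^{ik}_j$; using $b^i_j=\tau^{i,j}(b^i)$, $b^i_k=\tau^{i,k}(b^i)$, $b^i_{jk}=\tau^{i,j}_k(b^i_k)=\tau^{i,k}_j(b^i_j)$ (cf.\ \eqref{eq:laplace_projection}--\eqref{eq:commuting_projections}), these four conics join $b^i(a),\,b^i_j(a),\,b^i_{jk}(a),\,b^i_k(a)$ cyclically. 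The first and easy point is that this cross-section quadrilateral is planar: the triples $b^i(a),b^i_j(a),y^{i,j}$ and $b^i(a),b^i_k(a),y^{i,k}$ and $b^i_k(a),b^i_{jk}(a),y^{i,j}_k$ are collinear, while the Laplace points $y^{i,j},y^{i,k},y^{i,j}_k,y^{i,k}_j$ all lie on the single line $l^i=Q^{jk}\cap Q^{jk}_i$ (proof of Theorem~\ref{thm:cyclidic_3d_system}), so all four vertices lie in the plane $b^i(a)\vee l^i$. The second point is that the four conics actually bound an SC-patch adapted to this quadrilateral. By the construction of adapted SC-patches in Section~\ref{sec:supercyclides} it suffices, having fixed two adjacent conics and the two opposite boundary planes, to verify that the conic on $Q^{ij}_k$ is the perspective image of the conic on $Q^{ij}$ from the appropriate Laplace point of the cross-section quadrilateral, and symmetrically for the $k$-pair. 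Via the pairwise perspectivity of characteristic conics within each face patch (proof of Proposition~\ref{prop:properties_supercyclidic_patches}, cf.\ \iref{item:prop_patches_projection}) and the relations $b^i_j=\tau^{i,j}(b^i)$, this reduces to the statement that a certain composition of perspectivities around the cube is again a perspectivity, i.e.\ that the relevant centers are collinear; this follows from the collinearity $y^{i,j},y^{i,k},y^{i,j}_k,y^{i,k}_j\in l^i$ above together with the fact that the edge system of the supporting Q-net is fundamental. \textbf{This last matching of perspectivity centers is the main obstacle}: the coplanarity is clean, but bookkeeping how the Laplace points of the cross-section quadrilateral depend on $a$ and identifying them with the centers of the composed perspectivities is the delicate part. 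Uniqueness of $f^i(a)$ is then immediate, an SC-patch being determined by its boundary. Running this for each $i$ yields the three families $\mathcal{F}^i$ and, on a generic part of the unit cube, a map $F$ with $F|_{\{s_i=a\}}=f^i(a)$ whose restriction to each coordinate $2$-plane is an SC-patch and to each coordinate line a characteristic conic.

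\emph{Properties \iref{item:sc_cube_properties_common_point} and \iref{item:sc_cube_properties_common_conic}.} The patches carrying direction-$i$ conics are exactly those in $\mathcal{F}^j\cup\mathcal{F}^k$; for such a patch $P$, its characteristic line $a^i$ is the intersection of the supporting planes of its two extreme direction-$i$ conics, and by construction each of these is a direction-$i$ conic lying on one of the four cube faces carrying $i$-edges, whose supporting plane belongs to the pencil through the corresponding face characteristic line and hence passes through $q^i=\pi^i\cap\pi^i_j\cap\pi^i_k$ by Remark~\ref{rem:compatible_planes_via_concurrency} (equivalently Corollary~\ref{cor:compatible_planes}). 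Thus $q^i\in a^i$ for every such $P$, and being the intersection of three planes in general position $q^i$ is the unique common point. For \iref{item:sc_cube_properties_common_conic}, if $i\ne j$ then $f^i(s_i)\in\mathcal{F}^i$ and $f^j(s_j)\in\mathcal{F}^j$ are the coordinate surfaces $\{s_i=\text{const}\}$ and $\{s_j=\text{const}\}$ of $F$; their intersection is the coordinate line with $s_i$ and $s_j$ both fixed, which is a characteristic conic in the remaining direction $k$ --- the direction shared by the two patches --- and genericity excludes further intersection within the patches.

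\emph{Property \iref{item:sc_cube_properties_ftrafo}.} Let $f^i(c),f^i(c')\in\mathcal{F}^i$, both parametrized in the directions $j,k$, taken in isoparametric correspondence. Fix $\alpha\in\{j,k\}$ and a value of the transverse parameter: the corresponding direction-$\alpha$ conics of $f^i(c)$ and $f^i(c')$ are two characteristic conics of one and the same family on the SC-patch obtained by letting $s_i$ run over $[0,1]$ at that transverse value (a member of $\mathcal{F}^k$ or $\mathcal{F}^j$ according as $\alpha=j$ or $\alpha=k$), hence are perspective from a point $O$ (proof of Proposition~\ref{prop:properties_supercyclidic_patches}). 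Then the chord joining the two corresponding points passes through $O$, and the tangents to the two conics at those points, being corresponding lines of the perspectivity from $O$, lie together with the chord in the plane spanned by $O$ and either tangent. So for each of the two parameter directions the tangent to $f^i(c)$, the tangent to $f^i(c')$ and the connecting chord are coplanar, which is precisely the condition of Definition~\ref{def:f-trafo_smooth} for $f^i(c)$ and $f^i(c')$ to be fundamental transforms of each other; taking $c=0$, $c'=1$ gives the statement for the opposite cyclidic faces.
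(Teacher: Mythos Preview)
Your overall strategy matches the paper's, and your arguments for \iref{item:sc_cube_properties_common_point} and \iref{item:sc_cube_properties_ftrafo} are sound --- the latter, via perspectivity of conics, is a clean variant of the paper's argument via the tangent-cone property of vertical patches. Two points deserve comment.

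\emph{The core step.} You correctly identify the remaining obstacle and you are right that it amounts to a collinearity of centers, but the bookkeeping you flag as delicate is avoidable. The paper observes that the four vertices of the original face $Q^{jk}$ together with the four vertices of the cross-section quadrilateral form a Q-cube. The four planes supporting the direction-$j$ conics on its $j$-edges are all planes of direction-$j$ characteristic conics on faces of the \emph{original} SC-cube, hence by the last sentence of Remark~\ref{rem:compatible_planes_via_concurrency} they all pass through $q^j$. Corollary~\ref{cor:compatible_planes} then yields commutativity of the four Laplace projections for this new Q-cube; three of them already relate the given conics (one is $\tau^{j,k}$ on $Q^{jk}$, the other two are internal to the face patches on $Q^{ij}$ and $Q^{ij}_k$), so the fourth --- the Laplace perspectivity of the cross-section quadrilateral --- does as well. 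The direction-$k$ pair is handled symmetrically. This replaces your center-tracking by a single application of results already in hand.

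\emph{Property \iref{item:sc_cube_properties_common_conic}.} Here there is a genuine gap. You assert a global map $F:[0,1]^3\to\RP^3$ satisfying $F|_{\{s_i=a\}}=f^i(a)$ for all three $i$ simultaneously, and then deduce \iref{item:sc_cube_properties_common_conic} from the fact that coordinate surfaces of $F$ meet in coordinate lines. But the three families a priori define three separate parametrizations $F^1,F^2,F^3$ of the interior, and their coincidence is precisely the content of \iref{item:sc_cube_properties_common_conic}; nothing in your core step establishes it. The paper proceeds differently: the core step has the immediate consequence that any $f^i(a)$ splits the SC-cube into two smaller SC-cubes sharing $f^i(a)$ as a face. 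Given $f^i\in\mathcal{F}^i$ and $f^j\in\mathcal{F}^j$, their boundaries on $Q^{ij}$ meet at a point $p$ (and at $p_k$ on $Q^{ij}_k$); splitting first by $f^i$ and then by $f^j$ refines into four SC-cubes. The direction-$k$ conic joining $p$ to $p_k$ arises in two ways --- as a characteristic conic of $f^i$ or of $f^j$ --- and another application of Remark~\ref{rem:compatible_planes_via_concurrency} with Corollary~\ref{cor:compatible_planes} shows they agree. With \iref{item:sc_cube_properties_common_conic} secured this way, your argument for \iref{item:sc_cube_properties_ftrafo} goes through unchanged, since it relies only on the fact that the relevant direction-$\alpha$ conics of $f^i(c)$ and $f^i(c')$ lie on a common SC-patch from another family.
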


\begin{figure}[htb]
\begin{center}
\quad\quad\quad \input{ 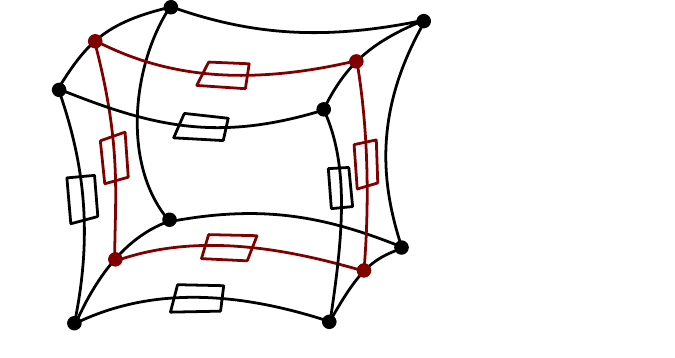_t } 
\end{center}
\caption{SC-cubes induce smooth families of SC-patches.}
\label{fig:SC_cube_families}
\end{figure}

\begin{proof}
The proof of Theorem~\ref{thm:cyclidic_3d_system} shows that going along characteristic
conics of the supercyclidic faces of an SC-cube closes up as indicated in
Fig.~\ref{fig:SC_cube_families}. Generically, one obtains four conic arcs that
intersect in four coplanar vertices\footnote{If opposite boundary curves of a supercyclidic face intersect,
those intersections are singular points of the patch (Remark~\ref{rem:singularities}).
Accordingly, one obtains only three or less vertices when going around the cube if
one passes through such points. However, as the singularities are isolated, the
presented argumentation may be kept valid by incorporation of a continuity
argument.} as the four centers of perspectivity that relate isoparametric points
on the patch boundaries are collinear (cf.  Fig.~\ref{fig:laplace_projection_cube}).
In order to verify that those arcs constitute the boundary of an SC-patch, it
remains to show that
opposite arcs are related by a ``Laplace perspectivity'' with respect
to the constructed vertex quadrilateral. So consider
Fig.~\ref{fig:SC_cube_families} and note that the vertices $x,x_1,x_{13},x_3,\tilde
x_2,\tilde x_{12},\tilde x_{123},\tilde x_{23}$ constitute a Q-cube.  Moreover,
there are unique planes $\pi^1,\tilde \pi^1_2, \tilde \pi^1_{23},\pi^1_3$
through its edges of direction 1 that support the characteristic conics of the
original cyclidic faces and those planes intersect in a point $q^1$ according to
Remark~\ref{rem:compatible_planes_via_concurrency}.  We know already that some of the
characteristic conics in those planes are related by Laplace perspectivities,
namely
\begin{equation*}
\tau^{1,3} : \pi^1 \to \pi^1_3,
\quad
\tilde\tau^{1,2} : \pi^1 \to \tilde\pi^1_2,
\quad
\tilde\tau^{1,2}_3 : \pi^1_3 \to \tilde\pi^1_{23}.
\end{equation*}
Therefore, also the curves in the planes $\tilde\pi^1_2$ and $\tilde\pi^1_{23}$ are
related by the Laplace perspectivity $\tilde\tau^{1,3}_2 : \tilde\pi^1_2 \to
\tilde\pi^1_{23}$ according to Corollary~\ref{cor:compatible_planes}.

We conclude that characteristic conics of the supercyclidic faces of an SC-cube
induce three families of SC-patches, each family interpolating between opposite
faces.  Denote by $\mathcal{F}^i$ the family that interpolates in the direction
$i$. The construction implies that the patches of the family $\mathcal{F}^i$ may
be parametrized smoothly by points $\tilde x^i$ of the boundary arc $b^i$ that connects $x$
and $x_i$. Note that, as a consequence, any
intermediate patch $f^i \in \mathcal{F}^i$ splits the original SC-cube into two
smaller SC-cubes, for which $f^i$ is a common face.

\emph{Proof of \iref{item:sc_cube_properties_common_point}:}
This is an immediate consequence of
Remark~\ref{rem:compatible_planes_via_concurrency}.

\emph{Proof of \iref{item:sc_cube_properties_common_conic}:}
As indicated in Fig.~\ref{fig:SC_cube_fundamental_trafo}, the patches $f^i$ and
$f^j$ determine a pair of corresponding points $p$ and $p_k$ on the opposite
faces of the SC-cube with respect to the common net direction $k$ of $f^i$ and
$f^j$.  Together with the vertices of $f^i$ and $f^j$, the points $p$ and $p_k$
induce a refinement of the original supporting Q-cube into four smaller Q-cubes.
This follows from the aforementioned property that any patch $f^i \in
\mathcal{F}^i$ different from the opposite faces of the SC-cube splits the cube
into two smaller SC-cubes, together with the fact that the characteristic conics
around an SC-cube close up. We conclude that for two of the induced SC-cubes the
vertices $p$ and $p_k$ are connected by a characteristic conic of $f^i$, while
for the other two cubes they are connected by a characteristic conic
of $f^j$. The fact that those conics coincide is again an implication of
Remark~\ref{rem:compatible_planes_via_concurrency} together with
Corollary~\ref{cor:compatible_planes}.

\emph{Proof of \iref{item:sc_cube_properties_ftrafo}:}
It is sufficient to prove the claim for opposite faces of the SC-cube, since any
pair $f^i, \tilde f^i \in \mathcal{F}^i$ may be understood as a pair of opposite
faces after
a possible 2-step reduction of the initial SC-cube to a smaller SC-cube that is
induced by $f^i$ and~$\tilde f^i$.  Now, corresponding points $p$ and $p_k$ on
opposite faces, top and bottom, say, may be identified by using "vertical"
patches as indicated in Fig.~\ref{fig:SC_cube_fundamental_trafo}. Thus property
\iref{item:sc_cube_properties_common_conic} implies that corresponding tangents
in those points intersect due to the tangent cone property for the
vertical patches.
\end{proof}

\begin{figure}[htb]
\begin{center}
 \input{ 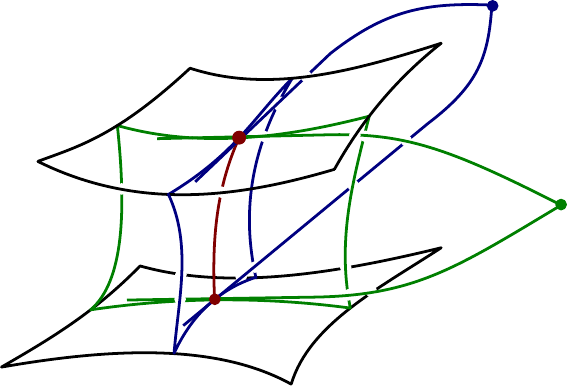_t } 
\end{center}
\caption{Opposite faces of an SC-cube form a fundamental pair.}
\label{fig:SC_cube_fundamental_trafo}
\end{figure}

\paragraph{Supercyclidic coordinate systems}
We say that an SC-cube is \emph{regular} if its six supercyclidic faces do not
contain singularities and opposite faces are disjoint. Note that this implies
that all patches of all three families are regular, which may be derived from
the fact that isolated singularities of SC-patches appear as intersections of
opposite boundary curves, cf. Remark~\ref{rem:singularities}.
This in turn shows that for a regular SC-cube, any two patches $f^i, \tilde f^i \in
\mathcal{F}^i$ are disjoint.  Otherwise there would exist a patch $f$ of another
family through $p \in f^i \cap \tilde f^i$ for which the same reasoning as above
together with a ``cutting down argument'' as in the proof of Theorem~\ref{thm:sc_cube_properties}
would imply that $p$ is a singular point of $f$.  Having observed this, we see
that for a regular SC-cube the property
\iref{item:sc_cube_properties_common_conic} of Theorem~\ref{thm:sc_cube_properties}
gives rise to classical conjugate coordinates on the region that is covered by
the SC-patches of the families $\mathcal{F}^1$, $\mathcal{F}^2$, and
$\mathcal{F}^3$, cf. Fig.~\ref{fig:SC_cube_coordinates}.  Associated with those
coordinates, planarity of vertex quadrilaterals of sub-patches of the coordinate
surfaces induces arbitrary Q-refinements of the original Q-cube. 

\begin{figure}[htb]
\begin{center}
 \input{ 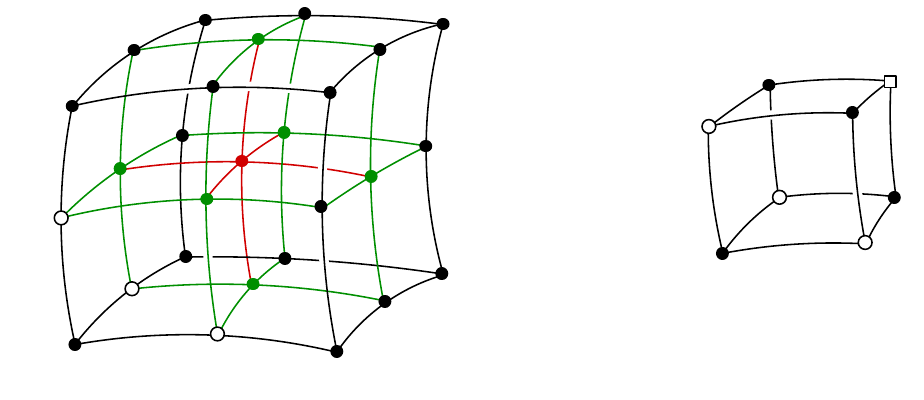_t } 
\end{center}
\caption{Regular SC-cubes induce smooth conjugate coordinates and give rise to
arbitrary Q-refinements of the supporting Q-cube.}
\label{fig:SC_cube_coordinates}
\end{figure}

\paragraph{TCC-reduction of the supercyclidic 3D system}
Theorem~\ref{thm:cyclidic_3d_system} shows that it is possible to equip elementary
quadrilaterals of a 3D Q-net consistently with adapted SC-patches that satisfy
the common boundary condition, such an extension being determined by its
restriction to three coordinate planes (one of each family). We will now
demonstrate that imposition of the TCC-condition in coordinate planes, i.e., to
require that each 2D layer is a 2D supercyclidic net is an admissible reduction
of the underlying 3D system, which in turn shows the existence of 3D
supercyclidic nets. In fact, it turns out that the TCC-reduction is
multidimensionally consistent, which will later give rise to multidimensional
SC-nets as well as fundamental transformations thereof that exhibit the
same Bianchi-permutability properties as fundamental transformations of
classical conjugate nets and their discrete counterparts.

\begin{theorem}
\label{thm:tcc_reduction}
Imposition of the TCC-property on 2D coordinate planes is an admissible
reduction of the SC 3D system. This means, propagation of admissible TCC-Cauchy
data for coordinate planes $\coordsurf{ij}$
according to Theorem~\ref{thm:cyclidic_3d_system} yields a Q-net with SC-patches
adapted to elementary quadrilaterals, such that the common boundary condition is
satisfied and the TCC-property holds in all 2D coordinate planes. In particular,
the reduction is multidimensionally consistent.
\end{theorem}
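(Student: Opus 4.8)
The plan is to recast the TCC-property in coordinate surfaces as a $C^1$-gluing condition for the boundary conics --- so that an $m$D supercyclidic net becomes a Q-net carried together with $m$ fundamental line congruences and $m$ adapted conic splines --- and then to propagate these ingredients by the $m$D consistent systems they already obey, checking that the outcome agrees with the SC 3D system.

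For the reformulation I would first observe, via Lemma~\ref{lem:tcc_characterization}, that the restriction of an SC-net to a coordinate surface $\coordsurf{ij}$ is a 2D supercyclidic net if and only if, along that surface, the arcs $b^i$ fit together into a $C^1$ conic spline in direction $i$ and the arcs $b^j$ into a $C^1$ conic spline in direction $j$: two adapted SC-patches sharing a boundary curve have, automatically, a common tangent to that curve at each common vertex, so by Lemma~\ref{lem:tcc_characterization} the TCC-property there is exactly the coincidence of the \emph{remaining} boundary tangents at the common vertices, i.e. the tangential matching of consecutive conic arcs. Equivalently, the boundary tangents at vertices organize into well-defined line congruences $t^1,\dots,t^m:\Z^m\to\linespace^3$, one per direction, with every arc $b^l$ tangent to $t^l$ at both of its endpoints; and since, by the perspectivity of opposite boundary curves of a patch (Proposition~\ref{prop:properties_supercyclidic_patches}~\iref{item:prop_patches_projection}), $t^l$ is precisely the extension of the supporting Q-net obtained by repeated application of Lemma~\ref{lem:2d_systems}~(L), it is a fundamental line system by Corollary~\ref{cor:2d_quad_line_systems}. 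Thus an $m$D supercyclidic net is the same datum as a Q-net $x:\Z^m\to\RP^3$ equipped with $m$ fundamental line congruences $t^l$ extending it and $m$ adapted conic splines $b^l$, the common boundary condition being automatic once the perspectivities relating opposite boundary arcs of adjacent patches commute.

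Admissible TCC-Cauchy data then amounts to three 2D supercyclidic nets on $\coordsurf{12},\coordsurf{13},\coordsurf{23}$ agreeing along the three coordinate axes, equivalently: the Q-net on the three coordinate planes; for each $l$, the congruence $t^l$ on the two coordinate planes through the $l$-axis; and the conic splines $b^l$ there. I would propagate each ingredient in turn. The Q-net extends to $\Z^m$ by its own $m$D consistent 3D system (Theorem~\ref{thm:4d_consistency}). Each $t^l$ is extended first to the third coordinate plane $\coordsurf{jk}$ by Lemma~\ref{lem:2d_systems}~(L) (with Cauchy data its restriction to the axes $\coordsurf{j},\coordsurf{k}$) and then to all of $\Z^m$ as a fundamental line system; both steps are $m$D consistent by Corollary~\ref{cor:2d_quad_line_systems}. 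Because $t^l$ is fundamental, in each elementary cube the four focal planes of direction $l$ --- exactly the boundary planes $\pi^l=t^l\vee t^l_l$ through the direction-$l$ edges --- are concurrent (Lemma~\ref{lem:fundamental_cubes}~\iref{item:lem_fun_cubes_concurrent_planes_any}), so by Corollary~\ref{cor:compatible_planes} the Laplace perspectivities through the collinear Laplace points commute around the cube. Hence the conic splines $b^l$ propagate consistently by these perspectivities: each edge receives a single arc, tangent at its endpoints to the corresponding lines of $t^l$ (so that the $b^l$ remain $C^1$ in direction $l$ as well) and shared by all incident faces. By the construction of adapted SC-patches the six faces obtained in each cube are then genuine adapted SC-patches satisfying the common boundary condition, i.e. an SC-cube.

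It remains to identify this with the SC 3D system. Each SC-cube built above has as its three faces through the distinguished vertex exactly the prescribed near faces, so by the uniqueness in Theorem~\ref{thm:cyclidic_3d_system} it coincides with the SC-cube that the SC 3D system assigns to those faces; propagating admissible TCC-Cauchy data by the SC 3D system therefore yields precisely the $\Z^m$-configuration of the previous paragraph. That configuration carries well-defined fundamental tangent congruences $t^1,\dots,t^m$ and adapted $C^1$ conic splines, so by the first paragraph it is an $m$D supercyclidic net --- a Q-net with adapted SC-patches satisfying the common boundary condition and the TCC-property in every 2D coordinate plane --- and the multidimensional consistency of the reduction is inherited from the $m$D consistency of the three systems used (the Q-net 3D system, the fundamental-line-system 3D systems for the $t^l$, and the commuting Laplace perspectivities for the $b^l$). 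The step I expect to require the most care is the reformulation together with the concurrency input: one must check precisely that, for a 2D SC-net, the tangent congruence $t^l$ is literally the (L)-extension of the supporting Q-net, and that concurrency of the boundary planes $\pi^l$ around each cube (Remark~\ref{rem:compatible_planes_via_concurrency}) is exactly what makes the perspectivity-propagated boundary arcs respect both the common boundary condition and the transverse $C^1$-continuity; once these are secured, the coincidence with the SC 3D system via uniqueness and the multidimensional consistency are formal.
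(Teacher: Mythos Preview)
Your argument is correct and follows essentially the same route as the paper: you reduce the TCC-property to the existence of well-defined tangent congruences $t^l$ (via Lemma~\ref{lem:tcc_characterization}), propagate each $t^l$ as a fundamental line system through the $m$D-consistent 2D system of Lemma~\ref{lem:2d_systems}~(L) and Corollary~\ref{cor:2d_quad_line_systems}, and then use Lemma~\ref{lem:fundamental_cubes}~\iref{item:lem_fun_cubes_concurrent_planes_any} together with Corollary~\ref{cor:compatible_planes} to make the Laplace perspectivities commute so that the boundary arcs propagate consistently. The only addition beyond the paper's proof is your explicit identification of the resulting configuration with the SC 3D system via the uniqueness clause of Theorem~\ref{thm:cyclidic_3d_system}, which is a harmless closing step.
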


\begin{proof}
Denote the supporting Q-net by $x$ and the adapted patches by $f$ and assume
TCC-Cauchy data $(x,f)|_{\coordsurf{jk}}$, $1 \le j < k \le m$, to be given
(admissible in the sense that the common boundary condition is satisfied along
coordinate axes).  To begin with, note that~$x$ is uniquely determined by the
Cauchy data $x|_{\coordsurf{ij}}$ as Q-nets are governed by an $m$D consistent
3D system. Accordingly, we may assume that $x$ is given and focus on the Cauchy
data $f|_{\coordsurf{jk}}$ for adapted SC-patches.

The propagation of adapted patches may be understood as propagation of their
defining boundary curves and it has been shown
(Remark~\ref{rem:compatible_planes_via_concurrency} and Corollary~\ref{cor:compatible_planes}) that, on a
3D cube, the propagation of boundary curves by Laplace perspectivities is
consistent if and only if the boundary planes satisfy
\begin{equation}
\label{eq:boundary_plane_propagation_3d}
\pi^i_{jk} = (\pi^i \cap \pi^i_j \cap \pi^i_k) \vee x_{jk} \vee x_{ijk}
= q^i \vee e^i_{jk},
\quad
i \ne j \ne k \ne i.
\end{equation}
Accordingly, it has to be shown that the propagation of boundary planes subject
to \eqref{eq:boundary_plane_propagation_3d} is multidimensionally consistent.
Moreover, we have to assure that the induced lines
\begin{equation*}
t^i = \pi^i_{-i} \cap \pi^i
\end{equation*}
at vertices $x$ constitute a discrete torsal line system. This is
the case if and only if they are compatible with adapted SC-patches, while automatically
implying the TCC-property in coordinate planes (see
Proposition~\ref{prop:properties_supercyclidic_patches}~\iref{item:prop_patches_concurrent_tangent_congruence_quad}
and Lemma~\ref{lem:tcc_characterization}).
Both suggested properties may be verified by a reverse argument: Instead of
considering the Cauchy problem for boundary planes, consider the evolution of
tangents subject to Lemma~\ref{lem:2d_systems}~(L) and for the admissible Cauchy data
$t^i|_{\coordsurf{ij}}$, $j = 1,\ldots,m$. Due to
Corollary~\ref{cor:2d_quad_line_systems} the propagation is multidimensionally consistent
and one obtains a unique fundamental line system $t^i$. It remains to note that
the solution $t^i$ encodes the unique solution 
\begin{equation*}
\pi^i = t^i \vee t^i_i
\end{equation*}
of the original Cauchy problem, for the reason that
\eqref{eq:boundary_plane_propagation_3d} is satisfied by virtue of
Lemma~\ref{lem:fundamental_cubes}~\iref{item:lem_fun_cubes_concurrent_planes_any}.
\end{proof}

\section{$m$D supercyclidic nets and their fundamental transformations}
\label{sec:md_supercyclidic_nets}

The previous considerations motivate the following definition of $m$-dimensional
supercyclidic nets, whose existence is verified by Theorem~\ref{thm:tcc_reduction}.

\begin{definition}[$m$D supercyclidic net]
\label{def:sc_net_md_from_2d}
Let $x:\Z^m \to \RP^3$, $m \ge 2$, be a Q-net and assume that $f: \left\{
\text{2-cells of } \Z^m \right\} \to \left\{ \text{supercyclidic patches in }
\RP^3 \right\}$ describes SC-patches, which are adapted to the elementary
quadrilaterals of $x$ and satisfy the common boundary condition. The pair
$(x,f)$ is called an \emph{$m$D supercyclidic net} if the restriction to any
coordinate surface is a 2D supercyclidic net, that is, if the TCC-property is
satisfied in each 2D coordinate plane.
\end{definition}

\paragraph{Discrete data for an $m$D supercyclidic net}
Analogous to the 2-dimensional case, an $m$-dimensional supercyclidic
net is completely encoded in the following data:

\begin{enumerate}[(i)]
\item The supporting Q-net
$x: \Z^m \to \RP^3$.
\item The discrete torsal tangent systems
$t^i : \Z^m \to \linespace^3$, $i = 1,\ldots,m$.
\item The boundary curves $b^i : \Z^m \to \arcs$, $i = 1,\ldots,m$, with $b^i$
tangent to $t^i$ at $x$ and to $t^i_i$ at $x_i$ and such that for each
quadrilateral opposite boundary curves are perspective from the corresponding
Laplace point.
\end{enumerate}

See also Remark~\ref{rem:discrete_data_2d} on the fully discrete description of
boundary curves in terms of points and lines.

\paragraph{Cauchy data for an $m$D supercyclidic net}
According to Theorem~\ref{thm:tcc_reduction}, Cauchy data for an $m$D supercyclidic net
is given by a collection $(x^{ij},f^{ij})$ of 2D supercyclidic nets for the 2D
coordinate planes $\coordsurf{ij}$, $1 \le i < j \le m$, which has to be
admissible in the sense that the common boundary condition is satisfied along
coordinate lines. Independent Cauchy data in turn is given by
Cauchy data for such compatible 2D supercyclidic nets and we conclude that the
following constitutes Cauchy data for an $m$D supercyclidic net.

\begin{enumerate}[(i)]
\item
\label{item:cauchy_data_sc_qnet}
Cauchy data for the supporting Q-net $x : \Z^m \to \RP^3$, e.g.,
\begin{equation*}
x|_{\coordsurf{ij}},\quad 1 \le i < j \le m.
\end{equation*}
\item
\label{item:cauchy_data_sc_tangents}
Cauchy data for the discrete torsal tangent systems $t^i$, $i = 1,\ldots,m$,
e.g.,
\begin{equation*}
t^i|_{\coordsurf{j}},\quad i,j=1,\ldots,m.
\end{equation*}
\item
\label{item:cauchy_data_sc_splines}
One adapted spline of conic segments for each direction, e.g.,
\begin{equation*}
b^i|_{\coordsurf{i}},\quad i=1,\ldots,m,
\end{equation*}
such that $b^i$ is tangent to $t^i$ at $x$ and to $t^i_i$ at $x_i$.
\end{enumerate}

\paragraph{Fundamental transformations of supercyclidic nets}
Motivated by the theories of cyclidic and hyperbolic nets, we define
fundamental transformations of supercyclidic nets by combination of the
according smooth and discrete fundamental transformations of the involved
SC-patches and supporting Q-nets, respectively.

\begin{definition}[F-transformation of supercyclidic nets]
\label{def:sc_net_f_trafo}
Two $m$D supercyclidic nets $(x,f)$ and $(x^+,f^+)$ are called \emph{fundamental
transforms} of each other if the supporting Q-nets $x$ and $x^+$ form a discrete
fundamental pair and each pair of corresponding SC patches $f,f^+$ forms a
classical fundamental pair.
\end{definition}

It turns out that two $m$D supercyclidic nets are fundamental transforms if and
only if they may be embedded as two consecutive layers of an $(m+1)$-dimensional
supercyclidic net. While the possibility of such an embedding implies the
fundamental relation according to
Theorem~\ref{thm:sc_cube_properties}~\iref{item:sc_cube_properties_ftrafo}, the
converse has to be shown. It is not difficult to see (think of Cauchy data for
$m$D supercyclidic nets) that this is an implication of the following
Proposition~\ref{prop:SC_cube_and_ftrafos}.

\begin{proposition}
\label{prop:SC_cube_and_ftrafos}
Consider a 3D cube $Q$ with planar faces and SC-patches $f,f^+$ adapted to one
pair of opposite faces of $Q$. Then the following statements are equivalent:
\begin{enumerate}[(i)]
\item
\label{item:sc_cube_trafos_transforms}
The patches $f$ and $f^+$ are classical fundamental transforms of each other.
\item
\label{item:sc_cube_trafos_perspective}
Corresponding boundary curves $b$ and $b^+$ of $f$ and $f^+$ are perspective
from the associated Laplace point $(x \vee x^+) \cap (y \vee y^+)$ of the face
of $Q$ that consists of the vertices $x,y$ of $b$ and $x^+,y^+$ of $b^+$.
\item
\label{item:sc_cube_trafos_extension}
The configuration can be extended to an SC-cube. The extension is uniquely
determined by the choice of the four missing boundary planes that have to be
concurrent, and a conic segment that connects corresponding vertices  in
one of those planes.
\end{enumerate}
\end{proposition}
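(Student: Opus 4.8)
The plan is to prove the chain of equivalences $\iref{item:sc_cube_trafos_transforms} \Leftrightarrow \iref{item:sc_cube_trafos_perspective} \Leftrightarrow \iref{item:sc_cube_trafos_extension}$, exploiting the geometric characterizations of classical F-transformations and of SC-cubes already established. Throughout, set up coordinates so that $f$ is adapted to the bottom face $(x,x_1,x_{12},x_2)$ of $Q$ and $f^+$ to the top face $(x_3,x_{13},x_{123},x_{23})$, writing $x^+ = x_3$ etc.\ and letting direction $3$ be the ``transformation direction''. The bottom boundary curve of $f$ in direction $i$ lies in a plane $\pi^i$, the corresponding top curve of $f^+$ in a plane $\pi^i_3$, and both pass through the respective edges of $Q$; the two Laplace points of the ``vertical'' face spanned by the $i$-edge of $f$ and the $i$-edge of $f^+$ are collinear on the line $l = Q^{i} \cap Q^{i}_3$ (the intersection of the two vertical faces not containing direction $i$), just as in the proof of Theorem~\ref{thm:cyclidic_3d_system}.

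First I would show $\iref{item:sc_cube_trafos_perspective} \Leftrightarrow \iref{item:sc_cube_trafos_transforms}$. Recall from Definition~\ref{def:f-trafo_smooth} that $f$ and $f^+$ are classical F-transforms precisely when, at every parameter point, $\partial_i f$, $\partial_i f^+$, and $f^+ - f$ are coplanar for $i = 1, 2$. Since on an SC-patch the parameter lines of one family are characteristic conics related by a perspectivity (Proposition~\ref{prop:properties_supercyclidic_patches}~\iref{item:prop_patches_projection}), a perspectivity between corresponding boundary curves $b$ of $f$ and $b^+$ of $f^+$ from a point $O$ forces, by differentiation along $b$, that at each pair of corresponding points the tangents to $b$ and $b^+$ meet $O$, hence are coplanar with the chord; this is exactly the F-transform condition restricted to one parameter direction, and applying it to both families of boundary curves gives the condition everywhere in the patch by the conjugate-net structure. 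Conversely, the F-transform condition says the chords $f^+(u) - f(u)$ form, along each boundary curve, a focal configuration; an elementary argument (the chords span a surface developable along the curve, whose tangent planes are spanned by $\partial_i f$ and the chord) shows all chords along one boundary curve pass through a single point, which must be the Laplace point of the vertical quadrilateral, and this point is independent of which pair of corresponding boundary curves one takes since those Laplace points are collinear. The place to be careful is phrasing the ``chords pass through one point'' step; the cleanest route is to note that $f, f^+$ being an F-pair means the two-layer net is a $3$-dimensional conjugate net, so the direction-$3$ parameter lines (the chords, in the discrete-transformation reading) together with the direction-$i$ lines form planar infinitesimal quadrilaterals, which is precisely the collinearity statement.

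Next I would establish $\iref{item:sc_cube_trafos_extension} \Rightarrow \iref{item:sc_cube_trafos_transforms}$, which is immediate: if the configuration extends to an SC-cube, then $f$ and $f^+$ are opposite faces of that cube and Theorem~\ref{thm:sc_cube_properties}~\iref{item:sc_cube_properties_ftrafo} says opposite faces of an SC-cube are classical fundamental transforms. For the converse $\iref{item:sc_cube_trafos_transforms} \Rightarrow \iref{item:sc_cube_trafos_extension}$ — which I expect to be the technical heart — I would argue as follows. Given the F-pair $f, f^+$ on opposite faces of $Q$, we already know (from the just-proved equivalence with \iref{item:sc_cube_trafos_perspective}) the bottom and top boundary planes $\pi^i$ and $\pi^i_3$ in each direction $i = 1, 2$, together with the perspectivity relating the curves in them. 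To build the SC-cube we must produce the four remaining boundary planes: the two ``vertical'' planes $\pi^3, \pi^3_1, \pi^3_2, \pi^3_{12}$ through the direction-$3$ edges of $Q$ and verify consistency. By Remark~\ref{rem:compatible_planes_via_concurrency} / Corollary~\ref{cor:compatible_planes}, the SC-cube exists exactly when, for direction $3$, the four planes through the direction-$3$ edges are concurrent, and when for directions $1$ and $2$ the already-determined planes $\pi^i, \pi^i_3$ complete (via the two missing ones, $\pi^i_j$ and $\pi^i_{3j}$... wait, in a two-layer cube the direction-$i$ edges are just four, and $\pi^i$, $\pi^i_3$ are two of the four: the other two are $\pi^i$ shifted in direction $j$, i.e.\ the planes through the $j$-shifted $i$-edges of the two faces) — so in directions $1$ and $2$ concurrency of the four $i$-edge planes must also hold. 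The point is: the F-transform hypothesis already pins down, in each direction $i=1,2$, the two planes $\pi^i$ and $\pi^i_3$ plus the requirement that all four $i$-edge planes of the cube be concurrent (this is what TCC in the two horizontal coordinate planes of the cube forces, and it is exactly the content of \iref{item:sc_cube_trafos_perspective} propagated), so the remaining freedom is the choice of the fourth vertical plane subject to concurrency, plus one conic segment in one plane to fix parametrization — matching the degree-of-freedom count in the statement. I would make this precise by invoking Theorem~\ref{thm:tcc_reduction}: the data $(x, f)$ restricted to the bottom coordinate plane is a $2$D SC-net, likewise the top one, and the F-relation is precisely what is needed for these to be two layers of a $3$D SC-net; feeding them as admissible Cauchy data on two of the coordinate planes of a $3$D cube (the third being determined by the Q-net $3$D system for $x$) and propagating by Theorem~\ref{thm:cyclidic_3d_system} yields the SC-cube, unique up to the stated choices. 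The main obstacle is verifying that the F-transform condition really supplies \emph{admissible} Cauchy data — i.e.\ that the common boundary condition along the shared edges is compatible — and here one uses that the perspectivity in \iref{item:sc_cube_trafos_perspective} forces the vertical boundary curve in any direction-$3$ plane to be shared consistently; the collinearity of the relevant Laplace points (same argument as in Theorem~\ref{thm:cyclidic_3d_system}) is what makes the four vertical perspectivities commute, so the construction closes up.
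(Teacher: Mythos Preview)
Your organization and the easy implications are fine: \iref{item:sc_cube_trafos_extension}~$\Rightarrow$~\iref{item:sc_cube_trafos_transforms} is indeed immediate from Theorem~\ref{thm:sc_cube_properties}~\iref{item:sc_cube_properties_ftrafo}, and \iref{item:sc_cube_trafos_perspective}~$\Leftrightarrow$~\iref{item:sc_cube_trafos_extension} follows from Corollary~\ref{cor:compatible_planes} together with the Laplace-perspectivity description of SC-cubes, as you indicate.

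The genuine gap is your argument for \iref{item:sc_cube_trafos_transforms}~$\Rightarrow$~\iref{item:sc_cube_trafos_perspective}. The F-transform condition along a boundary curve $b$ says that at each pair of corresponding points the tangent $t$, the tangent $t^+$, and the chord $pp^+$ are coplanar; it does \emph{not} by itself force the chords to be concurrent. Your ``elementary argument'' that the chords form a developable surface and hence pass through a single point is not valid---a torsal ruled surface need not be a cone---and your fallback (``the two-layer net is a 3D conjugate net, so infinitesimal quadrilaterals are planar'') merely restates the F-condition without extracting concurrency. The supercyclide structure has to be used in an essential way here.

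The paper's route is as follows. Use the F-condition in the \emph{conjugate} direction: along $b$, the conjugate tangents $g$ at $p$ and $g^+$ at $p^+$ intersect in a point $q$, and these points trace a curve $c$ that lies on both tangency cones $\mathcal{C}$ and $\mathcal{C}^+$ (so $c$ is regular). A separate lemma (Lemma~\ref{lem:fundamental_curves}) shows that if $b$ and $b^+$ are planar and $c$ is regular then $c$ is also planar, with supporting plane $\pi_c$ through $l = \pi \cap \pi^+$. Now $b$ and $c$ are perspective from the apex $a$ of $\mathcal{C}$, and $c$ and $b^+$ from the apex $a^+$ of $\mathcal{C}^+$; composing gives a projective map $\tau : \pi \to \pi^+$ taking $p \mapsto p^+$. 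Since $l \subset \pi_c$, the line $l$ consists of fixed points of $\tau$, and together with $\tau(x)=x^+$, $\tau(y)=y^+$ this forces $\tau$ to be the perspectivity from the Laplace point $(x \vee x^+) \cap (y \vee y^+)$. This is the missing idea: introduce the auxiliary curve $c$, prove it is planar, and factor the correspondence $b \to b^+$ through $c$ as a composition of two honest perspectivities.
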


For the proof of Proposition~\ref{prop:SC_cube_and_ftrafos} we will use the following

\begin{lemma}
\label{lem:fundamental_curves}
Let $f,f^+ : U \subset \R^2 \to \RP^3$ be smooth conjugate nets that are
fundamental transforms of each other and let $b$ and $b^+$ be corresponding
parameter lines, that is, tangents to $b$ and $b^+$ in corresponding points
intersect.  Further denote by $c$ the curve that consists of intersection points
of the conjugate tangents along $b$ and $b^+$.  If $b$ and $b^+$ both are planar
and $c$ is regular, then $c$ is also a planar curve. Moreover, the supporting
planes of $b,b^+$, and $c$ intersect in a line.
\end{lemma}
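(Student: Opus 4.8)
The plan is to pass to homogeneous coordinates and exploit the linear ODEs satisfied by the relevant structure functions along the two curves. I would lift $f,f^+$ to maps $\hat f,\hat f^+:U\to\R^4\setminus\{0\}$, take $b=f|_{\{v=v_0\}}$ parametrized by $u$, write $'=\partial_u$, and use the conjugate net equation $\partial_u\partial_v\hat f=\alpha\,\partial_u\hat f+\beta\,\partial_v\hat f+\gamma\,\hat f$ (with coefficients $\alpha^+,\beta^+,\gamma^+$ for $f^+$). The F-transform property says that for $i=1,2$ the four vectors $\hat f,\hat f^+,\partial_i\hat f,\partial_i\hat f^+$ are linearly dependent, so along $b$ one may write $\partial_i\hat f^+=p_i\hat f+q_i\hat f^++r_i\,\partial_i\hat f$. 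The conjugate tangent lines are $\ell=\vspan{\hat f,\partial_v\hat f}$ and $\ell^+=\vspan{\hat f^+,\partial_v\hat f^+}$, and substituting the relation for $i=2$ shows that their intersection point $c$ has the homogeneous representative $\hat c:=p_2\,\hat f+r_2\,\partial_v\hat f=\partial_v\hat f^+-q_2\,\hat f^+$.

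Next I would encode planarity of $b$ and $b^+$ by fixed covectors: there are $\xi,\xi^+\in(\R^4)^{*}\setminus\{0\}$ with $\xi\cdot\hat f\equiv0$ and $\xi^+\cdot\hat f^+\equiv0$ along the respective curves (hence also $\xi\cdot\partial_u\hat f\equiv0$ and $\xi^+\cdot\partial_u\hat f^+\equiv0$), and $P\cap P^+=\{\xi=\xi^+=0\}$ is generically a line $L$. To prove the lemma it suffices to exhibit constants $\lambda,\mu$, not both zero, with $(\lambda\xi+\mu\xi^+)\cdot\hat c\equiv0$: this realizes the plane of $c$ as a member of the pencil through $L$, which is precisely the asserted planarity of $c$ together with the concurrency of the three supporting planes in a line. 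Pairing $\xi$ and $\xi^+$ with the two expressions for $\hat c$ gives $\xi\cdot\hat c=r_2\,g$ and $\xi^+\cdot\hat c=g^+$, where $g:=\xi\cdot\partial_v\hat f$ and $g^+:=\xi^+\cdot\partial_v\hat f^+$.

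The heart of the argument is that $g$, $g^+$ and $r_2$ each obey a first-order linear ODE in $u$. Differentiating $\xi\cdot\hat f\equiv0$ along $b$ and using the conjugate net equation yields $g'=\beta\,g$, and identically $(g^+)'=\beta^+\,g^+$; differentiating the $i=2$ relation in $u$ and comparing $\partial_v\hat f$-components in the basis $\hat f,\hat f^+,\partial_u\hat f,\partial_v\hat f$ (a genericity assumption) gives $r_2'=(\beta^+-\beta)\,r_2$. Hence $(r_2 g)'=\beta^+\,(r_2 g)$, so $r_2 g$ and $g^+$ satisfy the same linear ODE and therefore $r_2 g\equiv C\,g^+$ for a constant $C$, nonzero wherever the data are nondegenerate. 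Consequently $(\xi-C\,\xi^+)\cdot\hat c\equiv0$, so $c$ lies in the plane $\{\xi-C\,\xi^+=0\}\supset L$, as required; the regularity hypothesis on $c$ is what guarantees it is a nondegenerate curve genuinely spanning this plane rather than collapsing to a point.

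I expect the only real delicacy to be the bookkeeping of genericity — staying on the open set where $\hat f,\hat f^+,\partial_u\hat f,\partial_v\hat f$ are linearly independent (so that $c$ is a well-defined point and the coefficient comparison is legitimate) and where $g,g^+$ do not vanish (so that $C\neq0$, i.e., the boundary curves are not tangent to their own supporting planes). Off such a set the statement is either vacuous or recovered by a continuity argument, in the spirit of the proof of Theorem~\ref{thm:sc_cube_properties}.
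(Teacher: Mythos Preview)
Your argument is correct, but it proceeds quite differently from the paper's proof. The paper gives a synthetic argument: it identifies the tangent line of $c$ at each point as the intersection $h=\pi\cap\pi^+$ of the tangent planes to $f$ and $f^+$ along $b$ and $b^+$ (using that these tangent planes envelop torsal ruled surfaces whose rulings are the conjugate tangents $g,g^+$), observes that every such $h$ meets the fixed line $l=P\cap P^+$, and then invokes the classical fact that a regular curve all of whose tangents meet a fixed line is planar, with supporting plane through that line. Your approach instead lifts to $\R^4$, expresses $\hat c$ explicitly, and reduces the planarity-through-$L$ claim to showing that the scalar functions $\xi\cdot\hat c=r_2 g$ and $\xi^+\cdot\hat c=g^+$ satisfy the same first-order linear ODE in $u$, hence are proportional. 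The paper's route gives more geometric insight (the torsal envelopes and the tangent-line characterization are nice in their own right), while yours is computationally direct, uses less classical background, and makes the role of the regularity hypothesis more transparent: in the paper it is essential for invoking the tangent-line criterion, whereas in your proof it is only needed so that $c$ is a genuine curve with a well-defined supporting plane.
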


\begin{proof}[Proof of Lemma~\ref{lem:fundamental_curves}]
Denote by $t$ and $t^+$ the tangents to $b$ and $b^+$ and by $g$ and $g^+$ the
conjugate tangents to $f$ and $f^+$. As $b$ and $b^+$ are planar, corresponding
tangents $t$ and $t^+$ intersect along the line $l$ that is the intersection of
the supporting planes of $b$ and $b^+$. In order to see that the curve $c$
composed of the intersection points $g \cap g^+$ is planar, note the following:
The tangent planes $\pi$ to $f$ along $b$ are of the form $t \vee g$ and
analogous for the tangent planes $\pi^+$ of $f^+$ one has $\pi^+ = t^+ \vee
g^+$. So $\pi \cap \pi^+ = h = (t \cap t^+) \vee (g \cap g^+)$ and we conclude
$h \cap l = t \cap t^+$. Moreover, the lines $h$ are the tangents of $c$, since
the planes $\pi$ along $b$ envelop a torsal ruled
surface $\sigma$ whose rulings are precisely the conjugate tangents
$g$.\footnote{This is a classical, purely geometric description of conjugate
tangents to a smooth surface (cf. \cite{PottmannWallner:2001:ComputationalLineGeometry}).}
Accordingly, $c$ is a curve on $\sigma$ and thus in each point of $c$ its
tangent is contained in the corresponding tangent plane $\pi$ of $\sigma$. Of
course the same argumentation holds if we consider the planes $\pi^+$ and hence
the tangents to $c$ must be of the form $\pi \cap \pi^+$.  Finally, for regular
$c$ the planarity follows from the fact that continuity of a regular curve
$\gamma$ implies that $\gamma$ is planar if and only if there exists a line that
intersects all of its tangents. Clearly, the supporting plane of $c$ also passes
through $l$.
\end{proof}

\begin{proof}[Proof of Proposition~\ref{prop:SC_cube_and_ftrafos}]
A Q-cube with one conic segment per edge that connects the incident vertices
encodes an SC-cube if and only if for each quadrilateral the conic segments that
connect the vertices of opposite edges are ``Laplace perspective'',
cf.~Fig.~\ref{fig:laplace_projection_cube}. Thus,
\iref{item:sc_cube_trafos_perspective} is necessary for
\iref{item:sc_cube_trafos_extension}. It is also sufficient since the suggested
construction in \iref{item:sc_cube_trafos_extension} always yields an SC-cube
according to Corollary~\ref{cor:compatible_planes} and actually covers all possible
extension as a consequence of the proof of Theorem~\ref{thm:cyclidic_3d_system}.  On the
other hand, \iref{item:sc_cube_trafos_extension} $\implies$
\iref{item:sc_cube_trafos_transforms} is covered by
Theorem~\ref{thm:sc_cube_properties}~\iref{item:sc_cube_properties_ftrafo}
and it remains to show \iref{item:sc_cube_trafos_transforms} $\implies$
\iref{item:sc_cube_trafos_perspective}.

So consider two corresponding boundary curves $b$ and $b^+$ of the SC-patches
$f$ and $f^+$ that connect four coplanar vertices $x,y,x^+,y^+$.  As the patches
$f$ and~$f^+$ are fundamental transforms, the tangents to $b$ and $b^+$ in
isoparametric points $p$ and $p^+$ intersect along the line $l$ that is the
intersection of the two supporting planes $\pi$ and $\pi^+$ of $b$ and $b^+$,
respectively.  On the other hand, also the conjugate tangents $g$ and $g^+$ in
$p$ and $p^+$ intersect in $q = g \cap g^+$ because of the fundamental relation
in the other direction and those points $q$ trace a curve $c$. Moreover, all
conjugate tangents along $b$ pass through the apex $a$ of the tangency cone
$\mathcal{C}$ to $f$ along $b$ and analogously all conjugate tangents along
$b^+$ pass through the apex $a^+$ of the tangency cone $\mathcal{C}^+$ to $f^+$
along $b^+$, cf. Fig.~\ref{fig:ftrafo_proof}. The curve $c$ is contained in the
intersection $\mathcal{C} \cap \mathcal{C}^+$ and thus is regular. As $b$ and
$b^+$ are planar, the conditions of Lemma~\ref{lem:fundamental_curves} are met and we
may conclude that $c$ is planar (actually, a conic arc) and contained in a plane
$\pi_c$ that passes through the line $l$.

\begin{figure}[htb]
 \input{ 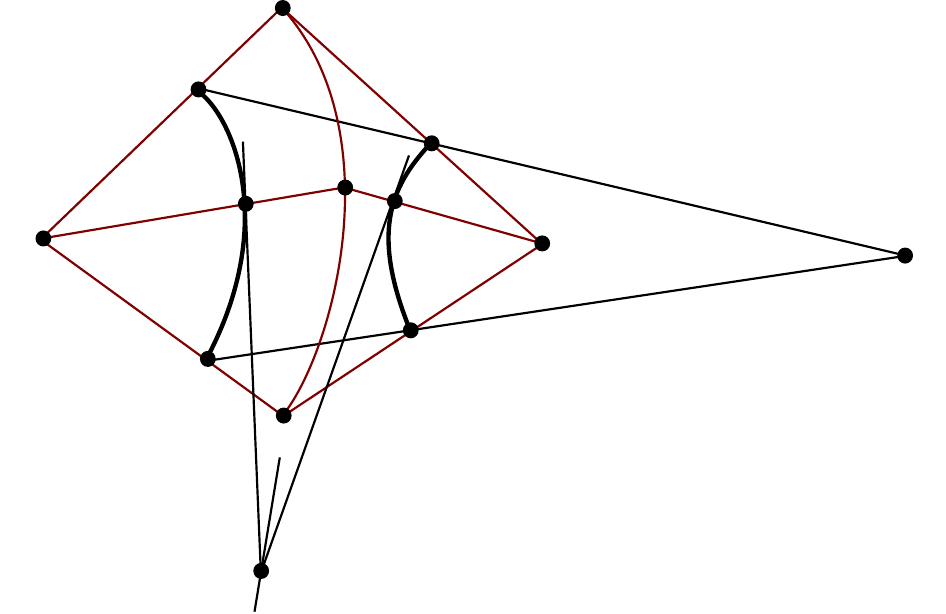_t } 
\caption{Two boundary curves of a fundamental pair of SC-patches with coplanar
vertices $x,y,x^+,y^+$.}
\label{fig:ftrafo_proof}
\end{figure}

It is obvious that isoparametric points on $b$ and $c$ are perspective from $a$
(along the generators of $\mathcal{C}$) and analogously $b^+$ and $c$ are
perspective from $a^+$. Accordingly, those perspective relations between curves
are restrictions of the perspective projective transformations
\begin{equation*}
\begin{array}{lll}
\tau_a : \pi \to \pi_c, & \quad  p \mapsto q \quad & (\text{central projection
through } a)\\
\tau_{a^+} : \pi_c \to \pi^+, & \quad  q \mapsto p^+ \quad & (\text{central
projection through } a^+).
\end{array}
\end{equation*}
Further denote by $\tau = \tau_{a^+} \circ \tau_a$ the composition of those
perspectivities so that
\begin{equation*}
\tau : \pi \to \pi^+, \quad p \mapsto p^+
\end{equation*}
identifies corresponding points on $b$ and $b^+$. The fact that $\tau$ is also a
perspectivity may be seen as follows. First note, that $\tau$ is determined by
four points in general position in $\pi$ and their images in $\pi^+$.  We know
already $\tau(x) = x^+$ as well as $\tau(y) = y^+$ and, moreover, that those
corresponding points are perspective from $o = (x \vee x^+) \cap (y \vee y^+)$.
It remains to observe that $l = \pi \cap \pi^+$ consists of fixed points of
$\tau$ because of $l \subset \pi_c$ and to consider two further points $r,s \in
l$ such that $x,y,r,s$ are in general position.
\end{proof}

\begin{remark*}
The center of perspectivity $o$ for $\tau$ lies on the line $a \vee a^+$. This
follows from the fact that each triple $p,p^+,q$ of corresponding points is
contained in a plane that is spanned by intersecting generators $g$ and $g^+$ of
$\mathcal{C}$ and $\mathcal{C}^+$, respectively. All planes of that kind are
contained in the pencil of planes through the line that is spanned
by the apices of $\mathcal{C}$ and $\mathcal{C}^+$, that is, $a \vee a^+$.
\end{remark*}

\paragraph{Construction of fundamental transforms}
Let $s=(x,f)$ be an $m$D supercyclidic net. According to our previous
considerations, the construction of a fundamental transform $s^+=(x^+,f^+)$ of
$s$ corresponds to the extension of $s$ to a 2-layer $(m+1)$-dimensional
supercyclidic net. Any such extension may be obtained as follows.  First,
construct an F-transform $x^+$ of $x$, that is, extend $x$ to a 2-layer
$(m+1)$-dimensional Q-net $X:\Z^m \times \left\{ 0,1 \right\} \to \RP^3$ with
$X(\cdot,0) = x$ so that $x^+ = X(\cdot,1)$.  Cauchy data for such an extension
is given by the values of $x^+$ along the coordinate axes $\coordsurf{i}$,
$i=1,\ldots,m$, which has to satisfy the condition that the quadrilaterals
$(x,x_i,x^+_i,x^+)$ are planar.  Further, the section on Cauchy data for an $m$D
supercyclidic net shows that the remaining relevant data are the tangents
of $s^+$ at one point $x^+_0$, which have to intersect the corresponding
tangents of $s$ at the corresponding point $x_0$.  The boundary curves of $s^+$
are then obtained as perspective images of the boundary curves of $s$.

\begin{remark*}
In the above construction there are some degrees of freedom left for the
construction of compatible ``vertical'' patches in the resulting 2-layer
$(m+1)$D SC-net, but this does not affect the net $s^+$.
\end{remark*}

\enlargethispage{\baselineskip}

\paragraph{Permutability properties}
Our previous considerations of $m$D supercyclidic nets immediately yield the
following permutability properties for F-transforms of supercyclidic nets.
Corollary~\ref{cor:permutability_scnets} is a literal translation of the
analogous Theorem~\ref{thm:permutability_qnets} for Q-nets and follows from the
fact that the missing tangents of the involved supercyclidic nets are uniquely
determined by their supporting Q-nets (subject to the $m$D consistent 2D system
that describes the extension of Q-nets to torsal line systems, cf.
Corollary~\ref{cor:2d_quad_line_systems}).

\begin{corollary}[Permutability properties of F-transformations of supercyclidic nets]
\label{cor:permutability_scnets}
\hfill
\begin{enumerate}[(i)]
\item 
Let $s=(x,f)$ be an $m$-dimensional supercyclidic net, and let $s^{(1)}$ and
$s^{(2)}$ be two of its F-transforms. Then there exists a 2-parameter family
of SC-nets $s^{(12)}$ that are F-transforms of both $s^{(1)}$ and
$s^{(2)}$. The corresponding vertices of the four SC-nets $s$, $s^{(1)}$,
$s^{(2)}$ and $s^{(12)}$ are coplanar. The net $s^{(12)}$ is uniquely determined
by one of its vertices.
\item
Let $s=(x,f)$ be an $m$-dimensional supercyclidic net. Let $s^{(1)}$, $s^{(2)}$ and $s^{(3)}$ be
three of its F-transforms, and let three further SC-nets $s^{(12)}$,
$s^{(23)}$ and $s^{(13)}$  be given such that $s^{(ij)}$ is a simultaneous
F-transform of $s^{(i)}$ and $s^{(j)}$. Then generically there exists
a unique SC-net $s^{(123)}$ that is an F-transform of $s^{(12)}$,
$s^{(23)}$ and $s^{(13)}$. The net $s^{(123)}$ is uniquely determined by the
condition that for every permutation $(ijk)$ of $(123)$ the corresponding
vertices of $s^{(i)}$, $s^{(ij)}$, $s^{(ik)}$ and $s^{(123)}$ are coplanar.
\end{enumerate}
\end{corollary}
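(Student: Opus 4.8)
The plan is to deduce Corollary~\ref{cor:permutability_scnets} from the multidimensional consistency of supercyclidic nets (Theorem~\ref{thm:tcc_reduction}) in precisely the way Theorem~\ref{thm:permutability_qnets} is deduced from the multidimensional consistency of Q-nets. The starting point is the observation recorded in the discussion preceding Proposition~\ref{prop:SC_cube_and_ftrafos}, namely that two $m$D supercyclidic nets are F-transforms of one another if and only if they occur as two consecutive layers of an $(m+1)$D supercyclidic net, the new lattice direction playing the role of the transformation direction. Consequently, a configuration $s,s^{(1)},s^{(2)},s^{(12)}$ of mutual F-transforms as in part (i) is nothing but an $(m+2)$D supercyclidic net restricted to $\Z^m\times\{0,1\}^2$, and a configuration as in part (ii) is an $(m+3)$D supercyclidic net restricted to $\Z^m\times\{0,1\}^3$, the extra lattice directions being the transformation directions.

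For part (i) I would first pass to the supporting Q-nets $x,x^{(1)},x^{(2)}$. By Theorem~\ref{thm:permutability_qnets}(i), the free choice of a single vertex $x^{(12)}(0)$ in the plane spanned by $x(0),x^{(1)}(0),x^{(2)}(0)$ determines a unique Q-net $x^{(12)}$ that is an F-transform of both $x^{(1)}$ and $x^{(2)}$, with corresponding vertices of the four Q-nets coplanar; this supplies the asserted 2-parameter family. It then remains to verify that this Q-net, together with the tangent congruences and conic splines carried by $s^{(1)}$ and $s^{(2)}$, assembles into admissible Cauchy data (in the sense of the section on Cauchy data for $m$D supercyclidic nets, adapted to dimension $m+2$) for an $(m+2)$D supercyclidic net. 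Here the torsal tangent systems $t^i$, $i=1,\dots,m$, of $s^{(12)}$ are forced by those of $s^{(1)}$ and $s^{(2)}$ on the neighbouring layers through the $m$D consistent extension of Q-nets to torsal line systems (Corollary~\ref{cor:2d_quad_line_systems}(iii)), and the conic splines of $s^{(12)}$ are then forced by the Laplace-perspectivity propagation of boundary curves (Corollary~\ref{cor:compatible_planes}, as in the proof of Theorem~\ref{thm:tcc_reduction}). Theorem~\ref{thm:tcc_reduction} then propagates this data to a genuine $(m+2)$D supercyclidic net, and its $(1,1)$-layer is the desired SC-net $s^{(12)}$; that it is an F-transform of both $s^{(1)}$ and $s^{(2)}$ follows from Theorem~\ref{thm:sc_cube_properties}~\iref{item:sc_cube_properties_ftrafo} applied to the relevant SC-cubes, and coplanarity of corresponding vertices is inherited from the Q-nets. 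Uniqueness of $s^{(12)}$ given one of its vertices follows from the corresponding uniqueness for $x^{(12)}$ together with the fact that all further data of $s^{(12)}$ is then forced.

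Part (ii) proceeds identically, one dimension higher: applying Theorem~\ref{thm:permutability_qnets}(ii) to the supporting Q-nets yields (generically) a unique Q-net $x^{(123)}$ that is a simultaneous F-transform of $x^{(12)},x^{(23)},x^{(13)}$ and is pinned down by the stated coplanarity condition; the remaining tangent congruences and conic splines of $s^{(123)}$ are again forced, so that the configuration fills in to an $(m+3)$D supercyclidic net by Theorem~\ref{thm:tcc_reduction}, and reading off the appropriate layer yields $s^{(123)}$ with the asserted properties.

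The main obstacle I anticipate is bookkeeping rather than new geometry: one must match the notion of Cauchy data for an $m$D supercyclidic net from Section~\ref{sec:md_supercyclidic_nets} with the data carried by a tuple of mutually F-transformed SC-nets, checking that nothing is over- or under-determined. In particular, one has to handle the ``vertical'' tangent and boundary data in the transformation directions, which carries genuine but $s^{(12)}$-irrelevant freedom (cf. the remark following the construction of fundamental transforms), and confirm that in part (i) the only essential freedom is the single vertex $x^{(12)}(0)$. Once this is settled, the statement is a literal translation of Theorem~\ref{thm:permutability_qnets}.
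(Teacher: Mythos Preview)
Your proposal is correct and follows essentially the same route as the paper: reduce to the Q-net permutability of Theorem~\ref{thm:permutability_qnets}, then observe that the remaining SC-data (tangent congruences, and hence boundary planes and conic splines) are forced by the $m$D consistent extension of Q-nets to torsal line systems (Corollary~\ref{cor:2d_quad_line_systems}). The paper compresses this into a single sentence preceding the corollary, whereas you spell out the propagation of tangents and boundary curves and invoke Theorem~\ref{thm:tcc_reduction} and Theorem~\ref{thm:sc_cube_properties} explicitly; but the underlying argument is the same.
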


\section{Frames of supercyclidic nets via projective reflections}
\label{sec:frames}

\begin{definition}[Tangent systems and frames of supercyclidic nets]
Let $x:\Z^m \to \RP^3$ be the supporting Q-net of a supercyclidic net and $t^i :
\Z^m \to \linespace^3$, $i=1,\ldots,m$, be the tangents to the supercyclidic
patches at vertices of $x$. We call
\[(t^1,\ldots,t^m) : \Z^m \to \underbrace{\linespace^3 \times
\ldots \times \linespace^3}_{m \text{ times}}\]
the \emph{tangent system} of the supercyclidic net and $(t^1,\ldots,t^m)(z)$ the
\emph{tangents at $x(z)$}.  For $m \in \left\{ 2,3 \right\}$ we refer to the
tangents as \emph{frames} of the supercyclidic net.
\end{definition}

It turns out that the integrable structure behind the tangent systems of
supercyclidic nets (in the sense of Corollary~\ref{cor:2d_quad_line_systems}) has a nice
expression in terms of \emph{projective reflections}
that are associated with the edges of the supporting Q-net. In fact, the
uncovered integrable system on projective reflections governs the simultaneous
extension of a single Q-net to multiple fundamental line systems also in the
multidimensional case and in that sense is more general than the theory of
supercyclidic nets which is rooted in 3-space.

Before going into details, let us recall that any pair of disjoint subspaces
$U,V \subset \RP^N$ with $U \vee V = \RP^N$ induces a unique projective
reflection $f: \RP^N \to \RP^N$ as follows. Points of the union $U \cup V$ are
defined to be fixed points of $f$. For all other points $x$ there exists a
unique line $l$ through $x$ that intersects $U$ and $V$, which yields points $p
= U \cap l$ and $q = V \cap l$. The image $f(x)$ is then defined by the
cross-ratio condition
\begin{equation*}
\crr(x,p,f(x),q) = -1.
\end{equation*}
Note that any projective reflection is an involution, $f^2 = \id$, since
$\crr(x,p,f(x),q) = -1$ if and only if $\crr(f(x),p,x,q) = -1$.

Relevant for our purpose is the particular subclass of projective reflections
that we denote by
\begin{equation*}
\pr(N) = \left\{ \text{Projective reflections in } \RP^N \text{ 
induced by a point } o \text{ and a hyperplane } \pi \not\ni o \right\}.
\end{equation*}

Now the starting point for the following considerations is the observation that
for a generic 3D supercyclidic net any two frames $T=(t^1,t^2,t^3)$ and
$T_i=(t^1_i,t^2_i,t^3_i)$ at adjacent vertices $x$ and $x_i$ are related by a
unique projective reflection $f^i \in \pr(3)$, that is
\begin{equation}
\label{eq:pr_frames}
f^i(T) = T_i
\iff
f^i(T_i) = T.
\end{equation}
To verify this claim, first note that \eqref{eq:pr_frames} implies $f^i(x) =
x_i$.  Further let $f^i \sim (o^i,\pi^i) \in \pr(3)$ be any projective
reflection with $f^i(x) = x_i$ and denote $p^j = t^j \cap \pi^i$. As $x \ne x_i$
we have $x \not\in \pi$ and thus
\begin{equation*}
f^i (t^j) = f^i(x \vee p^j) = x_i \vee p^j.
\end{equation*}
Therefore, \eqref{eq:pr_frames} implies that the $p^j$ have to be the three focal
points $p^j = t^j \cap t^j_i$, $j = 1,2,3$, and $\pi^i = p^1 \vee p^2 \vee p^3$
is uniquely determined. On the other hand, the condition $f^i(x) = x_i$
determines $o^i \in x \vee x_i$ uniquely according to $\crr(x,q^i,x_i,o^i) =
-1$, where $q^i = \pi^i \cap (x \vee x_i)$ is distinct from $x$ and $x_i$.

The next step is to consider frames $T,T_i,T_j,T_{ij}$ at the four vertices
$x,x_i,x_j,x_{ij}$ of a quadrilateral. One obtains
four induced projective reflections $f^i,f^i_j,f^j,f^j_i \in \pr(3)$ for which
\begin{equation*}
(f^j_i \circ f^i) (T) = T_{ij} = (f^i_j \circ f^j) (T)
\end{equation*}
and in particular
\begin{equation}
\label{eq:commuting_reflections_vertices}
f^i(x) = x_i,
\quad
f^j(x) = x_j,
\quad
f^i_j(x_j) =
f^j_i(x_i) = x_{ij}.
\end{equation}
Clearly, $f^i,f^j$, and $T$ determine the frames $T_i,T_j$, and $T_{ij}$
subject to $T_i = f^i(T), T_j = f^j(T)$, and
Lemma~\ref{lem:2d_systems}~(L).  Accordingly, the projective reflections
$f^i_j$ and $f^j_i$ may be constructed from $f^i$ and $f^j$ but the construction
depends on $T$.  However, it turns out that this constructive propagation
\begin{equation}
\label{eq:reflection_propagation}
(f^i,f^j) \mapsto (f^i_j,f^j_i)
\end{equation}
is independent of $T$ modulo \eqref{eq:commuting_reflections_vertices}.  Before
we show this, note that the previous considerations and described constructions
also apply for the simultaneous extension of a Q-net $x : \Z^m \to \RP^N$ to
fundamental line systems $t^1,\ldots,t^N$ and yield unique projective
reflections that relate the lines at adjacent vertices. Accordingly, we will now
go beyond the setting of supercyclidic nets in $\RP^3$ and consider the multidimensional
case in $\RP^N$.  Moreover, it is evident that the frame dependent construction of
reflections is $m$D consistent because of the multidimensional consistency of
the extension of Q-nets to fundamental line systems, cf.
Corollary~\ref{cor:2d_quad_line_systems}.

In order to show that the propagation \eqref{eq:reflection_propagation} depends
only on the supporting Q-net, still inducing an $m$D consistent 2D system with
variables in $\pr(N)$ on edges, we will need the following

\begin{lemma}
\label{lem:reflection_planes_intersection}
Let $Q=(x,x_1,x_{12},x_2)$ be a planar quadrilateral in $\RP^N$ and
$(t^i,t^i_1,t^i_{12},t^i_2)$, $i=1,\ldots,N$, be $N$ simultaneous extensions of
$Q$ to torsal line congruence quadrilaterals subject to
Lemma~\ref{lem:2d_systems}~(L). Further denote by $\pi^j = \bigvee_i (t^i \cap
t^i_j)$ the hyperplane spanned by focal points that are associated with the edge
$x \vee x_j$ (cf. Fig.~\ref{fig:reflection_planes}) and let $X = \pi^1 \cap \pi^2
\cap \pi^1_2 \cap \pi^2_1$.  Then $\codim(X) = 2$.
\end{lemma}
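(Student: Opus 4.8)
The plan is to introduce homogeneous coordinates adapted to the quadrilateral $Q$, to compute explicit representatives of the four focal points of each congruence on the four edges of $Q$, and then to read off the claim by a short piece of linear algebra. First I would normalise: since $Q=(x,x_1,x_{12},x_2)$ is a (generic) planar quadrilateral, I may pick lifts $\mathbf{x},\mathbf{x}_1,\mathbf{x}_2,\mathbf{x}_{12}\in\R^{N+1}$ with $\mathbf{x}_{12}=\mathbf{x}+\mathbf{x}_1+\mathbf{x}_2$. Each extension $(t^i,t^i_1,t^i_{12},t^i_2)$ satisfies $x\in t^i$, $x_j\in t^i_j$, $x_{12}\in t^i_{12}$, so its focal points $a^i=t^i\cap t^i_1$, $b^i=t^i\cap t^i_2$, $c^i=t^i_1\cap t^i_{12}$, $d^i=t^i_2\cap t^i_{12}$ obey the collinearities $x,a^i,b^i\in t^i$ etc. Fixing a lift $\mathbf{a}^i$ of $a^i$, collinearity of $x,a^i,b^i$ lets me write $\mathbf{b}^i=\mathbf{a}^i-\epsilon_i\mathbf{x}$ for a scalar $\epsilon_i$; here $\epsilon_i\ne 0$ because $t^i_1,t^i_2$ are skew, hence $a^i\ne b^i$.

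The key step is to pin down $c^i$ and $d^i$ in terms of $\mathbf{a}^i$. I would use that construction~(L) gives $t^i_{12}=(x_{12}\vee t^i_1)\cap(x_{12}\vee t^i_2)$, so that $c^i=t^i_1\cap t^i_{12}=t^i_1\cap(x_{12}\vee t^i_2)$ (because $t^i_1\subseteq x_{12}\vee t^i_1$) and, symmetrically, $d^i=t^i_2\cap(x_{12}\vee t^i_1)$. Now $\mathbf{a}^i+\epsilon_i\mathbf{x}_1$ lies on $t^i_1=x_1\vee a^i$, and also
$\mathbf{a}^i+\epsilon_i\mathbf{x}_1=\mathbf{b}^i+\epsilon_i(\mathbf{x}+\mathbf{x}_1)=\mathbf{b}^i+\epsilon_i\mathbf{x}_{12}-\epsilon_i\mathbf{x}_2\in\operatorname{span}(\mathbf{b}^i,\mathbf{x}_{12},\mathbf{x}_2)$, i.e.\ it lies on $x_{12}\vee t^i_2$; since $t^i_1\not\subseteq x_{12}\vee t^i_2$ (again skewness of $t^i_1,t^i_2$), the intersection is a single point and $c^i=[\mathbf{a}^i+\epsilon_i\mathbf{x}_1]$. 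The same computation with the roles of the indices $1,2$ exchanged yields $d^i=[\mathbf{b}^i-\epsilon_i\mathbf{x}_2]=[\mathbf{a}^i-\epsilon_i(\mathbf{x}+\mathbf{x}_2)]$.

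With this in hand the four hyperplanes are the projectivisations of $\operatorname{span}\{\mathbf{a}^i\}$, $\operatorname{span}\{\mathbf{a}^i-\epsilon_i\mathbf{x}\}$, $\operatorname{span}\{\mathbf{a}^i+\epsilon_i\mathbf{x}_1\}$, $\operatorname{span}\{\mathbf{a}^i-\epsilon_i(\mathbf{x}+\mathbf{x}_2)\}$ (spans taken over $i=1,\dots,N$), corresponding to $\pi^1,\pi^2,\pi^2_1,\pi^1_2$. By genericity the $N$ lines $t^i$ span $\RP^N$, which forces $\mathbf{a}^1,\dots,\mathbf{a}^N$ to be linearly independent and $\mathbf{x}\notin\operatorname{span}\{\mathbf{a}^i\}$. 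For $\mathbf{v}=\sum_i\lambda_i\mathbf{a}^i$, comparing with a lift lying in $\pi^2$ and using independence of $\mathbf{a}^1,\dots,\mathbf{a}^N,\mathbf{x}$ shows $\mathbf{v}\in\pi^2\iff\sum_i\lambda_i\epsilon_i=0$; as not all $\epsilon_i$ vanish, this is one nontrivial linear condition, so $\codim(\pi^1\cap\pi^2)=2$. But the very same condition $\sum_i\lambda_i\epsilon_i=0$ gives $\sum_i\lambda_i\mathbf{a}^i=\sum_i\lambda_i(\mathbf{a}^i+\epsilon_i\mathbf{x}_1)\in\operatorname{span}\{\mathbf{c}^i\}$ and $\sum_i\lambda_i\mathbf{a}^i=\sum_i\lambda_i(\mathbf{a}^i-\epsilon_i(\mathbf{x}+\mathbf{x}_2))\in\operatorname{span}\{\mathbf{d}^i\}$, i.e.\ $\pi^1\cap\pi^2\subseteq\pi^2_1\cap\pi^1_2$. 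Hence $X=\pi^1\cap\pi^2\cap\pi^1_2\cap\pi^2_1=\pi^1\cap\pi^2$, and therefore $\codim(X)=2$.

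The one step that requires genuine care is the middle one — extracting the representatives $\mathbf{c}^i$ and $\mathbf{d}^i$ from the construction~(L); once these are correct, the rest is bookkeeping. I would also make sure the genericity hypotheses implicit in the phrase ``$N$ simultaneous extensions'' (the $t^i$ spanning $\RP^N$ so that the focal points on each edge actually span a hyperplane, the neighbouring lines of each congruence being distinct, the Laplace points off $\pi^1$, etc.) are exactly what is used, and in particular that $\codim(\pi^1\cap\pi^2)$ does not drop below $2$ — which is where $\sum_i\epsilon_i^2\ne 0$ enters.
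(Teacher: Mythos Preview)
Your proof is correct, but it proceeds quite differently from the paper's. The paper argues synthetically: for each pair $i<j$ it observes that the eight lines $t^i,t^j,\dots,t^i_{12},t^j_{12}$ form a line complex cube having $Q$ as a planar focal quadrilateral, so by Lemma~\ref{lem:fundamental_cubes} the four lines $l^{1,ij}=a^i\vee a^j\subset\pi^1$, $l^{2,ij}\subset\pi^2$, $l^{1,ij}_2\subset\pi^1_2$, $l^{2,ij}_1\subset\pi^2_1$ are concurrent at a point $p^{ij}\in X$. The span $\tilde X$ of the $p^{ij}$ then has codimension~$1$ in $\pi^1$, and the sandwich $\tilde X\subset X\subset\pi^1\cap\pi^2$ forces $\codim X=2$. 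Your argument replaces this geometric detour with an explicit computation in homogeneous coordinates adapted to $Q$: you write down lifts of all four focal points of each congruence, recognise that membership of $\mathbf v=\sum_i\lambda_i\mathbf a^i$ in any of $\pi^2,\pi^2_1,\pi^1_2$ reduces to the \emph{same} linear condition $\sum_i\lambda_i\epsilon_i=0$, and conclude $X=\pi^1\cap\pi^2$ directly. Your route is more elementary and self-contained (it avoids Lemma~\ref{lem:fundamental_cubes} entirely), while the paper's route is more in keeping with the synthetic development and makes the geometric content --- the concurrency points $p^{ij}$ --- visible. One small remark: your closing comment about $\sum_i\epsilon_i^2\ne0$ is just the statement that not all $\epsilon_i$ vanish, which you have already argued from skewness of $t^i_1,t^i_2$; no additional hypothesis is needed there.
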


\begin{figure}[htb]
\begin{center}
 \input{ 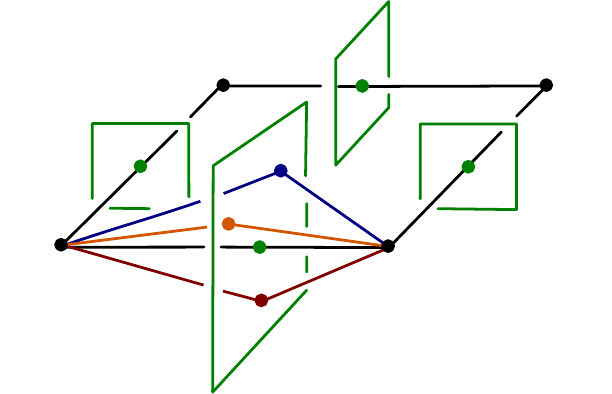_t } 
\end{center}
\caption{Hyperplanes spanned by focal points of line congruence quadrilaterals
that extend the same planar quadrilateral.}
\label{fig:reflection_planes}
\end{figure}

\begin{proof}
For $1 \le i < j \le N$ the eight corresponding lines
$t^i,t^j,\ldots,t^i_{12},t^j_{12}$ form a line complex cube for which $Q$ is a
focal quadrilateral. Now consider the four lines $l^{1,ij} = (t^i \cap t^i_1)
\vee (t^j \cap t^j_1) \subset \pi^1$, $l^{2,ij} \subset \pi^2$, $l^{1,ij}_2
\subset \pi^1_2$, and $l^{2,ij}_1 \subset \pi^2_1$ that are spanned by the
remaining focal points of the complex cube. Lemma~\ref{lem:fundamental_cubes} implies
that those four lines intersect in one point $p^{ij} = l^{1,ij} \cap l^{2,ij}
\cap l^{1,ij}_2 \cap l^{2,ij}_1 \in X$ due to the planarity of $Q$.  As always,
we assume that the data is generic so that $\tilde X = \bigvee_{i \ne j} p^{ij}$
is a subspace of codimension 1 in $\pi^1 = \bigvee_{i \ne j} l^{1,ij}$ and thus
has codimension 2 in $\RP^N$. Now $\tilde X \subset X \subset \pi^1 \cap \pi^2$
together with $\codim(\pi^1 \cap \pi^2) = 2$ shows $X = \tilde X$ and hence
proves the lemma.
\end{proof}

\begin{theorem}[Extension of Q-nets to
fundamental line systems via projective reflections]
\label{thm:projective_reflections}
Let $(x,x_1,x_{12},x_2)$ be a planar quadrilateral in $\RP^N$, $N \ge 3$, and
let $f^1,f^2 \in \pr(N)$ be projective reflections with $f^i(x) = x_i$. Then
there exist unique projective reflections $f^1_2,f^2_1 \in \pr(N)$, which are
determined by the conditions
\begin{equation}
\label{eq:pr_propagation}
f^i_j(x_j) = x_{12}
\text{ and }
(f^2_1 \circ f^1) (l) = (f^1_2 \circ f^2) (l)
\text{ for all } l \in \linespace^N \text{ with } x \in l.
\end{equation}
Further, let $x : \Z^m \to \RP^N$, $N \ge 3$, be a generic Q-net. Then the 
induced propagation
\begin{equation*}
\tau :
\Z^m \times \pr(N) \times \pr(N) \to \pr(N) \times \pr(N), \quad
(f^i,f^j) \mapsto (f^i_j,f^j_i)
\end{equation*}
of projective reflections is $m$D consistent, that is, admissible Cauchy data
$f^i|_{\coordsurf{i}}$, $i = 1,\ldots,m$, (mapping adjacent vertices onto each
other) determines all remaining projective reflections uniquely.
\end{theorem}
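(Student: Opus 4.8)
The plan is to reduce the statement to the identification, directly in terms of $f^1,f^2$ and the quadrilateral, of the two fixed hyperplanes of the propagated reflections. Recall that a reflection in $\pr(N)$ with $f^i(x)=x_i$ is completely encoded by its fixed hyperplane $\pi^i$, the point $o^i\in x\vee x_i$ being then forced by $\crr(x,o^i,x_i,q^i)=-1$ with $q^i=\pi^i\cap(x\vee x_i)$; moreover, for every line $l\ni x$ one has $f^i(l)=x_i\vee(l\cap\pi^i)$, so $f^i(l)$ meets $l$ in the focal point $l\cap\pi^i$. This is exactly the datum needed to feed a line $l$ through $x$ into Lemma~\ref{lem:2d_systems}~(L): setting $l^{(1)}=f^1(l)$, $l^{(2)}=f^2(l)$ and $l^{(12)}=(x_{12}\vee l^{(1)})\cap(x_{12}\vee l^{(2)})$ produces a torsal line-congruence quadrilateral extending $(x,x_1,x_{12},x_2)$.

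The key step is to show that, as $l$ ranges over the generic lines through $x$, all focal points $l^{(1)}\cap l^{(12)}$ lie in one hyperplane, and all focal points $l^{(2)}\cap l^{(12)}$ lie in one hyperplane. For the first family, denote by $\pi^1,\pi^2$ the fixed hyperplanes of $f^1,f^2$, complete a given generic line $l=t^1$ to $N$ lines $t^1,\dots,t^N$ through $x$ whose $f^1$-focal points span $\pi^1$ and whose $f^2$-focal points span $\pi^2$, and apply Lemma~\ref{lem:reflection_planes_intersection} to the $N$ simultaneous $(L)$-extensions $(t^i,f^1(t^i),t^i_{12},f^2(t^i))$: it follows that the hyperplane $\bigvee_i(f^1(t^i)\cap t^i_{12})$ contains the codimension-$2$ subspace $\pi^1\cap\pi^2$. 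Since a hyperplane through $\pi^1\cap\pi^2$ is determined by any one further point it contains, varying $t^1=l$ while keeping $t^2,\dots,t^N$ fixed shows this hyperplane is independent of $l$; call it $\pi^2_1$. Then $l^{(1)}\cap l^{(12)}\in\pi^2_1$ for every generic $l$, and hence for all $l$, this being a closed condition. The symmetric argument produces $\pi^1_2$. I then let $f^2_1,f^1_2\in\pr(N)$ be the reflections with fixed hyperplanes $\pi^2_1,\pi^1_2$ and with $f^2_1(x_1)=x_{12}$, $f^1_2(x_2)=x_{12}$.

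Existence is then immediate: for generic $l\ni x$ one has $f^2_1(f^1(l))=x_{12}\vee(l^{(1)}\cap\pi^2_1)=x_{12}\vee(l^{(1)}\cap l^{(12)})=l^{(12)}$ and symmetrically $f^1_2(f^2(l))=l^{(12)}$, so $(f^2_1\circ f^1)(l)=(f^1_2\circ f^2)(l)$, which together with $f^i_j(x_j)=x_{12}$ is exactly \eqref{eq:pr_propagation}. For uniqueness, suppose $g^2_1,g^1_2\in\pr(N)$ satisfy the same conditions; for generic $l$ the common line $m=(g^2_1\circ f^1)(l)=(g^1_2\circ f^2)(l)$ passes through $x_{12}$ and meets both $l^{(1)}$ and $l^{(2)}$, hence lies in $(x_{12}\vee l^{(1)})\cap(x_{12}\vee l^{(2)})$, so $m=l^{(12)}$ (using that $l^{(1)},l^{(2)}$ are generically skew and $x_{12}$ lies on neither). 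Thus $g^2_1$ agrees with $f^2_1$ on all generic lines through $x_1$; since a reflection in $\pr(N)$ mapping $x_1$ to $x_{12}$ is determined by its action on the pencil of lines through $x_1$ (that action recovers its fixed hyperplane as the locus of points $l\cap g^2_1(l)$, whence also its center), we conclude $g^2_1=f^2_1$, and likewise $g^1_2=f^1_2$.

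For the $m$D consistency of $\tau$ I would use an auxiliary frame. Given admissible Cauchy data $f^i|_{\coordsurf{i}}$, choose $N$ lines $T=(t^1,\dots,t^N)$ through $x(0)$ and transport them along the coordinate axes by the given reflections, producing Cauchy data for $N$ fundamental line systems $t^1,\dots,t^N$ that extend the Q-net $x$; by Corollary~\ref{cor:2d_quad_line_systems} these propagate $m$D-consistently to all of $\Z^m$. Reading off from each pair of adjacent frames the unique relating reflection reconstructs the field of $f^i$, and by the frame-independence just proved (the propagated pair $(f^i_j,f^j_i)$ depends only on $f^i,f^j$ and the supporting Q-net, not on $T$) this field solves $\tau$ and is independent of the choice of $T(0)$; hence $\tau$ is $m$D consistent. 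I expect the genuine obstacle to be the hyperplane identification in the second paragraph---upgrading the finite statement of Lemma~\ref{lem:reflection_planes_intersection} to the assertion that the one-parameter family of focal points $l^{(1)}\cap l^{(12)}$ spans no more than a hyperplane---while the remaining steps are bookkeeping with the definition of $\pr(N)$ and routine genericity.
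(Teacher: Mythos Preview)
Your argument is correct and rests on the same key lemma as the paper (Lemma~\ref{lem:reflection_planes_intersection}), but the two proofs are organized differently in a way worth noting. The paper first fixes an auxiliary frame $T=(t^1,\dots,t^N)$ at $x$, constructs $f^1_2,f^2_1$ from the propagated frames, and then proves frame-independence by showing that the composite $\tau=\tau^2\circ\tau^1_2\circ\tau^2_1\circ\tau^1:\pi^1\to\pi^1$ of four central projections between the hyperplanes is the identity; Lemma~\ref{lem:reflection_planes_intersection} supplies the codimension-$2$ subspace $X=\pi^1\cap\pi^2\cap\pi^1_2\cap\pi^2_1$ of fixed points which, together with the $N$ frame points $p^i=t^i\cap\pi^1$, forces $\tau=\id$. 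You bypass the composite map and instead identify the fixed hyperplanes $\pi^2_1,\pi^1_2$ intrinsically: the same lemma (via $X=\pi^1\cap\pi^2$) gives $\pi^2_1\supset\pi^1\cap\pi^2$, and since a hyperplane through a codimension-$2$ subspace is pinned down by any one further point, holding $t^2,\dots,t^N$ fixed while varying $t^1=l$ shows that every focal point $l^{(1)}\cap l^{(12)}$ lies in one fixed hyperplane. Your route is a bit more constructive for existence, and your separate uniqueness argument (any competitor $g^2_1,g^1_2$ must produce the same transversal $m=l^{(12)}$, whence the same fixed hyperplane) is explicit where the paper leaves uniqueness implicit. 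Your closing worry about the ``genuine obstacle'' is unfounded: the varying-$t^1$ argument you gave for that step is complete. The treatment of $m$D consistency via an auxiliary frame and Corollary~\ref{cor:2d_quad_line_systems} is essentially identical to the paper's.
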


\begin{proof}
First observe that the maps $f^i|_{\coordsurf{i}}$, $i = 1,\ldots,m$, together
with one line $t(0) \ni x(0)$ are Cauchy data for the ($m$D consistent)
extension of $x$ to a fundamental line system. Accordingly, a frame
$T(0)=(t^1,\ldots,t^N)(0)$ at $x(0)$ yields well defined frames at every vertex
of $x$. Those frames in turn yield one well-defined projective reflection for
each edge as described before: The hyperplane $\pi^i$ of the reflection $f^i
\sim (o^i,\pi^i)$ associated with the edge $(x,x_i)$ is spanned by the
corresponding focal points, $\pi^i = \bigvee_{j=1,\ldots,N} (t^j \cap t^j_i)$,
which in turn allows to construct the center $o^i$ from the condition $f^i(x) =
x_i$. It remains to show that the construction is independent of $T(0)$, that
is, the propagation $(f^i,f^j) \mapsto (f^i_j,f^j_i)$ according to
\eqref{eq:pr_propagation} is well-defined.

So consider a planar quadrilateral $Q=(x,x_1,x_{12},x_2)$ in $\RP^N$ with one
generic frame $T=(t^1,\ldots,t^N)$ at $x$ and let $f^1,f^2 \in \pr(N)$ with
$f^i(x) = x_i$ be given. One obtains frames $T_1 = f^1(T)$ at $x_1$ and $T_2 =
f^2(T)$ at $x_2$ and Lemma~\ref{lem:2d_systems}~(L) yields a unique frame
$T_{12}$ at $x_{12}$.  As described above this induces projective reflections
$f^1_2,f^2_1$ such that
\begin{equation}
\label{eq:pr_commuting_frames}
(f^2_1 \circ f^1) (T) = T_{12} = (f^1_2 \circ f^2) (T)
\end{equation}
and we have to show that the so obtained maps satisfy \eqref{eq:pr_propagation}.
By construction we have $f^i_j(x_j) = x_{12}$. On the other hand, the second condition
$(f^2_1 \circ f^1) (l) = (f^1_2 \circ f^2) (l)$ for all
$l \in \linespace^N(x) = \left\{ l \in \linespace^N \mid x \in l \right\}$
may be rewritten as
\begin{equation}
\label{eq:pr_identity}
(f^2 \circ f^1_2 \circ f^2_1 \circ f^1) (l) = l,
\end{equation}
since projective reflections are involutions.\footnote{Note that the identity
\eqref{eq:pr_identity} on sets does not imply $f^2 \circ f^1_2 \circ f^2_1 \circ
f^1 = \id$. The composition could also be a homothety from an affine point of
view.} 
In order to verify \eqref{eq:pr_identity} we identify lines through vertices of
$Q$ with their intersections with the hyperplanes of the projective reflections
according to
\begin{equation*}
\begin{array}{lclclcl}
x & \in & l & \quad \leftrightarrow \quad & p & = & l \cap \pi^1\\
x_1 & \in & l_1 & \quad \leftrightarrow \quad & p_1 & = & l_1 \cap \pi^2_1\\
x_{12} & \in & l_{12} & \quad \leftrightarrow \quad & p_{12} & = & l_{12} \cap \pi^1_2\\
x_2 & \in & l_2 & \quad \leftrightarrow \quad & p_2 & = & l_2 \cap \pi^2\\
\end{array}
\end{equation*}
After this identification, the images of lines through the vertices of $Q$ under
the considered projective reflections correspond to the images of their
representing points under certain central projections:
\begin{align*}
f^1 : \linespace^N(x) \to \linespace^N(x_1) & \quad \leftrightarrow \quad
\tau^1 : \pi^1 \to \pi^2_1 \quad (\text{central projection through } x_1)\\
f^2_1 : \linespace^N(x_1) \to \linespace^N(x_{12}) & \quad \leftrightarrow \quad
\tau^2_1 : \pi^2_1 \to \pi^1_2 \quad (\text{central projection through } x_{12})\\
f^1_2 : \linespace^N(x_{12}) \to \linespace^N(x_2) & \quad \leftrightarrow \quad
\tau^1_2 : \pi^1_2 \to \pi^2 \quad (\text{central projection through } x_2)\\
f^2 : \linespace^N(x_2) \to \linespace^N(x) & \quad \leftrightarrow \quad
\tau^2 : \pi^2 \to \pi^1 \quad (\text{central projection through } x)
\end{align*}
The identity \eqref{eq:pr_identity} may then be rewritten as
\begin{equation}
\label{eq:tau_identity}
\tau = \tau^2 \circ \tau^1_2 \circ \tau^2_1 \circ \tau^1 = \id.
\end{equation}
As $\tau : \pi^1 \to \pi^1$ is a projective transformation and
$\dim(\pi^1)=N-1$, it is sufficient to show that there are $N+1$ fixed points of
$\tau$ in general position. In fact, we already have $N$ fixed points because of
\eqref{eq:pr_commuting_frames}, that is
\begin{equation*}
\tau(p^i) = p^i, \quad i = 1,\ldots,N,
\end{equation*}
for $p^i = t^i \cap \pi^1$.
The existence of one further
fixed point is guaranteed by Lemma~\ref{lem:reflection_planes_intersection}.
\end{proof}

The analytic description of Q-nets with frames at vertices via reflections might
be useful as it turned out to be in the special case of circular nets.  For
circular nets, orthonormal frames at vertices were introduced in
\cite{BobenkoMatthesSuris:2003:OrthogonalSystems} and played an essential role
in the proof of the $C^\infty$-convergence of circular nets to smooth orthogonal
nets. The Dupin cyclidic structure behind was discovered later and gave rise to
the introduction of cyclidic nets in
\cite{BobenkoHuhnen-Venedey:2011:cyclidicNets}.  In that case, the orthonormal
frames at adjacent vertices of the supporting circular net are related by a
reflection in the Euclidean symmetry plane, which is a special instance of a
projective reflection (clearly, cyclidic nets in $\R^3$ are special instances of
supercyclidic nets).  Note that the Euclidean reflections in the cyclidic case
are uniquely determined by the supporting circular net, thus a Dupin cyclidic
net is uniquely determined by its supporting circular net and the frame in one
vertex.

\bibliographystyle{abbrv}

\end{document}